\theoremstyle{plain}
\newtheorem{theorem}{Theorem}[section]
\newtheorem{lemma}[theorem]{Lemma}
\newtheorem{problem}[theorem]{Problem}
\theoremstyle{definition}
\newtheorem{definition}[theorem]{Definition}
\theoremstyle{remark}
\newtheorem{remark}{Remark}
\newcommand{\norm}[1]{\left\Vert #1 \right\Vert}
\newcommand{\Prb}[1]{\mathbb{P}\left[ #1 \right]}
\newcommand{\E}[1]{\mathbb{E}\left[ #1 \right]}
\newcommand{\cond}[2]{\mathbb{E}\left[\left. #1 \right\vert #2 \right]}
\newcommand{\argmin}{\mathrm{argmin}}
\newcommand{\mrank}{\texttt{rank}}
\newcommand{\mnull}{\texttt{null}}
\newcommand{\mrow}{\texttt{row}}
\newcommand{\mcol}{\texttt{col}}
\begin{document}

\articletype{}

\title{The Impact of Local Geometry and Batch Size on Stochastic Gradient Descent for Nonconvex Problems}

\author{
\name{Vivak Patel\textsuperscript{a}\thanks{Email: vp314@uchicago.edu}}
\affil{\textsuperscript{a}Department of Statistics, University of Chicago, Illinois, USA}
}

\maketitle

\begin{abstract}
In several experimental reports on nonconvex optimization problems in machine learning, stochastic gradient descent (SGD) was observed to prefer minimizers with flat basins in comparison to more deterministic methods, yet there is very little rigorous understanding of this phenomenon. In fact, the lack of such work has led to an unverified, but widely-accepted stochastic mechanism describing why SGD prefers flatter minimizers to sharper minimizers. However, as we demonstrate, the stochastic mechanism fails to explain this phenomenon. Here, we propose an alternative deterministic mechanism that can accurately explain why SGD prefers flatter minimizers to sharper minimizers. We derive this mechanism based on a detailed analysis of a generic stochastic quadratic problem, which generalizes known results for classical gradient descent. Finally, we verify the predictions of our deterministic mechanism on two nonconvex problems.
\end{abstract}

\begin{keywords}
Stochastic Gradient Descent; Convergence; Divergence; Local Geometry; Batch Size; Nonconvex Problem
\end{keywords}

\begin{amscode}
90C15; 90C30
\end{amscode}

\section{Introduction}
Stochastic gradient descent (SGD) is a stochastic optimizer that has had tremendous success for nonconvex problems in machine learning \cite{bottou2016}. SGD's success has resulted in a number of experimental inquiries into why SGD outperforms its more deterministic counterparts, one of which noted that stochastic methods tend to converge to flatter minimizers---minimizers where the Hessian has relatively small eigenvalues---in comparison to deterministic methods \citep{keskar2016}. While the flatness of a minimizer's impact on machine learning performance metrics is still an open question \cite{dinh2017}, understanding SGD's observed preference for flatter minimizers can shed light into how this method operates for nonconvex problems, and (a) how to potentially leverage this understanding to induce SGD iterates to converge to minimizers with particular properties \cite{hochreiter1995,chaudhari2016}, or (b) how to formulate verifiable stopping criteria \cite{patel2016}.

Unfortunately, a rigorous analysis of SGD on nonconvex problems is still missing. While there are notable attempts that have analyzed structurally distinct variants of SGD on nonconvex problems to understand how these variants escape saddle points \cite{ge2015,jin2017,mou2017,zhang2017}, and even how these variants diverge from local minimizers \cite{kleinberg2018},\footnote{\cite{kleinberg2018} was posted to the arXiv several months after an earlier version of this work was posted to the arXiv.} these analyses do not actually apply to SGD. More relevant analyses are the works of \cite{hardt2015}, which established the algorithmic stability of SGD for nonconvex problems, and \cite{bottou2016}, which established the convergence of SGD to stationary points of nonconvex problems in probability. However, these results do not provide insight into why SGD prefers flatter minimizers. 

Absent a rigorous analysis, the widely-accepted explanation for why SGD prefers flat minimizers in comparison to sharp minimizers is, what we will call, the stochastic mechanism \cite{keskar2016,bala2017}. Roughly, the stochastic mechanism states that the stochasticity of SGD forces its iterates to explore the objective function's landscape, which, in turn, reduces the probability with which it can be attracted to sharp minimizers. The stochastic mechanism is better understood through a simple example, which is adapted from \cite{keskar2016}. 

\subsection{The Stochastic Mechanism}

Consider using SGD on a one-dimensional stochastic nonconvex problem whose expected objective function is given by the graph in Figure \ref{figure:one-dim-obj-A}. The stochasticity of the nonconvex problem is generated by adding independent symmetric, bounded random variables to the derivative evaluation at any point. That is, at any iteration, SGD's search direction is generated by the derivative of the expected objective function plus some symmetric, bounded random noise. 

Intuitively, the noise in the SGD search directions, by the size of the basins alone, will force the iterates to spend more time in the flatter minimizer on the left. That is, the noise of in SGD's search directions will increase an iterates probability of being in the flatter basin in comparison to the sharper basin. This is precisely stochastic mechanism.

\begin{figure}[bh]
\centering
\begin{tikzpicture}
\draw[step=1cm,gray,very thin,dotted] (-6,-1) grid (3,4);
\draw (-3,1) parabola (-6,4);
\draw (-3,1) parabola (1,4);
\draw (2,-1) parabola (1,4);
\draw (2,-1) parabola (3,4);
\end{tikzpicture}
\caption{The expected objective function for a simple one-dimensional nonconvex problem. The minimizer on the left is the flatter minimizer. The minimizer on the right is the sharper minimizer.}
\label{figure:one-dim-obj-A}
\end{figure}
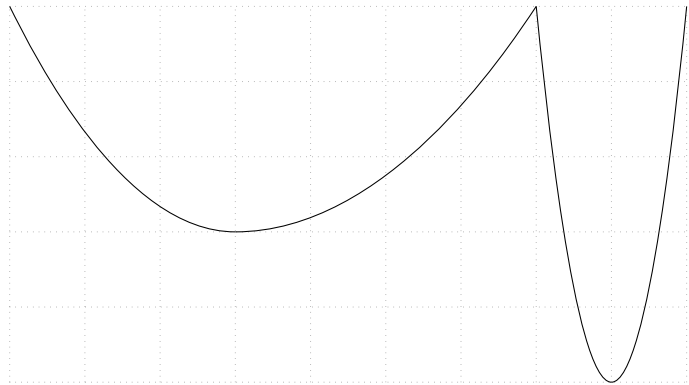

For another example, consider a similar one-dimensional stochastic nonconvex problem whose expected objective function is given by the graph in Figure \ref{figure:one-dim-obj-B}. In this example, the minimizer on the left is the flatter minimizer while the minimizer on the right is the sharper minimizer. Now the stochastic mechanism would predict that the probability of an iterate being in the basin of the sharper minimizer is actually greater than that of the flatter minimizer. 

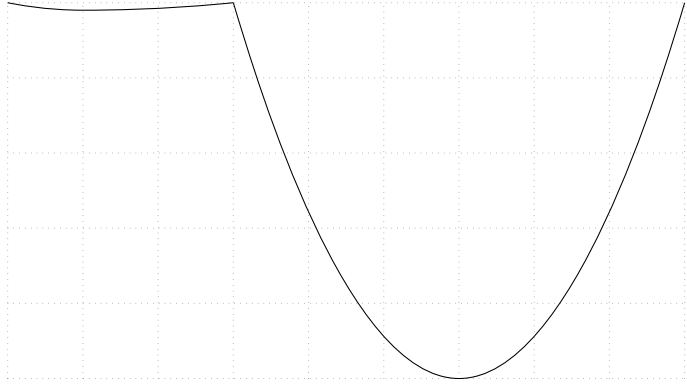
\begin{figure}
\centering
\begin{tikzpicture}
\draw[step=1cm,gray,very thin,dotted] (-6,-1) grid (3,4);
\draw (-5,3.9) parabola (-6,4);
\draw (-5,3.9) parabola (-3,4);
\draw (0,-1) parabola (-3,4);
\draw (0,-1) parabola (3,4);
\end{tikzpicture}
\caption{The expected objective function for a different one-dimensional nonconvex problem. The minimizer on the left is the flatter minimizer. The minimizer on the right is the sharper minimizer.}
\label{figure:one-dim-obj-B}
\end{figure} 

Thus, the stochastic mechanism is not explaining why SGD prefers flatter minimizers, but rather it is explaining why SGD has a higher probability of being in basins with larger size. Moreover, while flatness and basin size might seem to be intuitively connected, as Figure \ref{figure:one-dim-obj-B} shows, this is not necessarily the case. Therefore, according to this example, we might assume that flatness is an inappropriate term, and it is actually the volume of the minimizer that is important (based on the stochastic mechanism) in determining to which minimizers the SGD iterates converge.

However, ignoring flatness or sharpness of a minimizer in favor of the size of its basin of attraction does not align with another experimental observation: in high-dimensional problems, SGD even avoids those sharper minimizers that had a small fraction of their eigenvalues termed relatively ``large'' \cite{keskar2016}. Importantly, in such high-dimensions, the probability of the noise exploring those limited directions that correspond to the ``larger'' eigenvalues is low (e.g., consider isotropic noise in large dimensions). This suggests that, while the stochastic mechanism is intuitive and can explain why SGD may prefer large basins of attractions, it is insufficient to explain why SGD prefers flatter minimizers over sharper minimizers. Accordingly, what is an appropriate mechanism to describe how SGD iterates ``select'' minimizers?

\subsection{Main Contribution}
In this work, we derive and propose a deterministic mechanism based on the expected local geometry of a minimizer and the batch size of SGD to describe why SGD iterates will converge or diverge from the minimizer. As a consequence of our deterministic mechanism, we can explain why SGD prefers flatter minimizers to sharper minimizers in comparison to gradient descent, and we can explain the manner in which SGD will ``escape'' from certain minimizers. Importantly, we verify the predictions of our deterministic mechanism with numerical experiments on two nontrivial nonconvex problems. 

\subsection{Outline}

In \S \ref{section:classical}, we begin by reviewing the classical mechanism by which gradient descent diverges from a quadratic basin. In \S \ref{section:stochastic-quadratic}, we will rigorously generalize the classical, deterministic mechanism to SGD-$k$ (i.e., SGD with a batch size of $k$) on a generic stochastic quadratic problem, and, using this rigorous analysis, we define our deterministic mechanism. In \S \ref{section:homogeneous} and \S \ref{section:inhomogeneous}, we numerically verify the predictions of our deterministic mechanism on two nontrivial, nonconvex problem. In \S \ref{section:conclusion}, we conclude this work. 

\section{Classical Gradient Descent} \label{section:classical}
As is well known, classical gradient descent also has a mechanism to diverge from local minimizers. To illustrate this mechanism, consider a simple one-dimensional quadratic problem $f(x) = x^2$ that is minimized iteratively using classical gradient descent with a fixed step size of $C = 1.1$. Suppose that the procedure is initialized at $x_0 = -1$. Then, the next iterate is given by $x_1 = x_0 - C(2x_0) = 1.2$. The second iterate is given by $x_2 = x_1 - C(2x_1) = 1.2 - (1.1)(2.4) = -1.44$. This pattern is illustrated in Figure \ref{figure:divergence-gd}.

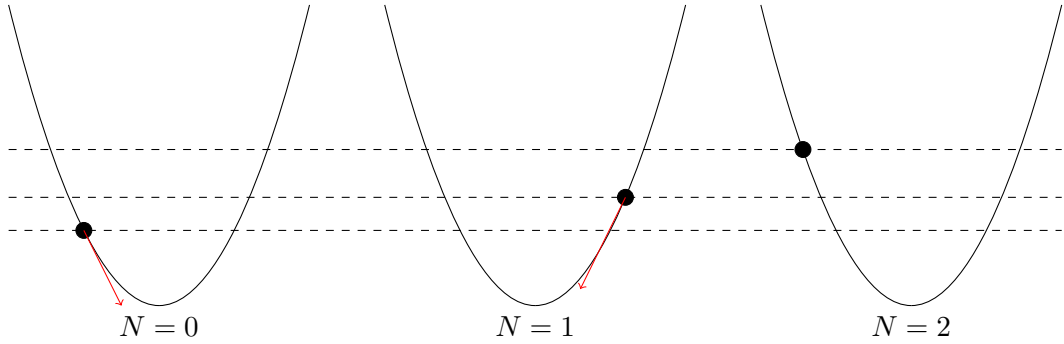
\begin{figure}[hbt]
\centering
\begin{tikzpicture}
\draw (-5,0) parabola (-7,4);
\draw (-5,0) parabola (-3,4);
\node[anchor=north] at (-5,0) {$N=0$};
\draw[dashed] (-7,1) -- (7,1);
\filldraw[black] (-6,1) circle (3pt) node[anchor=north east] {};
\draw[red,->] (-6,1) -- (-5.5,0);

\draw (0,0) parabola (-2,4);
\draw (0,0) parabola (2,4);
\node[anchor=north] at (0,0) {$N=1$};
\draw[dashed] (-7,1.44) -- (07,1.44);
\filldraw[black] (1.2,1.44) circle (3pt) node[anchor=south east] {};
\draw[red,->] (1.2,1.44) -- (0.6,0.22);

\draw (5,0) parabola (3,4);
\draw (5,0) parabola (7,4);
\node[anchor=north] at (5,0) {$N=2$};
\draw[dashed] (-7,2.0763) -- (7,2.0763);
\filldraw[black] (5-1.44,2.0763) circle (3pt) node[anchor=north west] {};
\end{tikzpicture}
\caption{An illustration of the divergence of gradient descent applied to $f(x)=x^2$, initialized at $x_0 = -1$ with a step size of $C = 1.1$. The arrows show tangent ray in the negative derivative direction.}
\label{figure:divergence-gd}
\end{figure}

There are two important observations that emerge from this simple example. The first observation is that, at each iteration, gradient descent is moving further and further away from the minimizer at $x=0$. This is a consequence of our choice in step size. In fact, if we had selected a step size of $C = 0.5$, then we would have converged within one iteration. The choice of step sizes that lead to convergence and divergence from a minimizer are classical results in numerical optimization, and are reproduced in the following result.

\begin{theorem}[See \cite{bertsekas1999}, Ch. 1] \label{theorem:gd}
Let $Q \in \mathbb{R}^{p \times p}$ be a symmetric, positive definite matrix; let $\theta^* \in \mathbb{R}^p$ be arbitrary; and define $f(\theta) = 0.5(\theta - \theta^*)'Q(\theta - \theta^*)$. Let $\lbrace \theta_k:k\in\mathbb{N} \rbrace$ be the iterated generated by gradient descent on $f(\theta)$, initialized with an arbitrary starting point $x_0 \in \mathbb{R}^p\setminus \lbrace \theta^* \rbrace$ and a constant step size $C > 0$. 
\begin{enumerate}
\item Let $\lambda_{\max}(Q)$ denote the maximum eigenvalue of $Q$. If $C < 2/\lambda_{\max}(Q)$, then $\norm{\theta_k - \theta^*}_2 \to 0$ as $k \to \infty$.
\item Let $\lambda_{\min}(Q)$ denote the minimum eigenvalue of $Q$. If $C > 2/\lambda_{\min}(Q)$, then $\norm{\theta_k - \theta^*}_2 \to \infty$ as $k \to \infty$.
\end{enumerate}
\end{theorem}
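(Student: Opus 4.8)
The plan is to reduce the vector recursion to $p$ decoupled scalar recursions by diagonalizing $Q$, and then to analyze the resulting geometric factors one eigenvalue at a time. First I would write out the gradient descent update. Since $\nabla f(\theta) = Q(\theta - \theta^*)$, the iteration $\theta_{k+1} = \theta_k - C\nabla f(\theta_k)$ becomes, in terms of the error $e_k := \theta_k - \theta^*$,
$$e_{k+1} = (I - CQ)e_k, \qquad \text{so that} \qquad e_k = (I - CQ)^k e_0.$$

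Because $Q$ is symmetric, the spectral theorem supplies an orthogonal matrix $U$ and a diagonal matrix $\Lambda = \mathrm{diag}(\lambda_1,\dots,\lambda_p)$ with each $\lambda_i > 0$ such that $Q = U\Lambda U'$. Then $I - CQ = U(I - C\Lambda)U'$ and $(I - CQ)^k = U(I - C\Lambda)^k U'$. Setting $y_k := U' e_k$ and using that $U$ is orthogonal (hence $\norm{e_k}_2 = \norm{y_k}_2$), the recursion decouples into the scalar relations $y_k^{(i)} = (1 - C\lambda_i)^k y_0^{(i)}$, which yields the key identity
$$\norm{\theta_k - \theta^*}_2^2 = \norm{y_k}_2^2 = \sum_{i=1}^p (1 - C\lambda_i)^{2k}\,(y_0^{(i)})^2.$$

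With this identity in hand, both parts follow from bounding the factors $\lvert 1 - C\lambda_i \rvert$. For part (1), the hypothesis $C < 2/\lambda_{\max}(Q)$ gives $0 < C\lambda_i \le C\lambda_{\max}(Q) < 2$ for every $i$, whence $\lvert 1 - C\lambda_i \rvert < 1$; each summand then tends to $0$, and so $\norm{\theta_k - \theta^*}_2 \to 0$. For part (2), the hypothesis $C > 2/\lambda_{\min}(Q)$ gives $C\lambda_i \ge C\lambda_{\min}(Q) > 2$ for every $i$, whence $\lvert 1 - C\lambda_i \rvert = C\lambda_i - 1 > 1$; since $x_0 \ne \theta^*$, the vector $y_0 = U' e_0$ is nonzero, so at least one coordinate $y_0^{(i)} \ne 0$, and the corresponding summand (and hence the whole nonnegative sum) diverges to $\infty$.

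The computation is essentially routine; the only point that warrants care is part (2), where divergence must hold for an \emph{arbitrary} nonzero starting point rather than for a generic one. The observation that removes this concern is that $C > 2/\lambda_{\min}(Q)$ forces every factor $\lvert 1 - C\lambda_i \rvert$ to exceed $1$ (not merely the one attached to $\lambda_{\min}$), since $C\lambda_i \ge C\lambda_{\min}(Q)$ for all $i$. Consequently, whichever eigendirection the initial error excites, that component blows up, and no nontrivial starting point can escape divergence. I therefore expect no substantial obstacle beyond correctly identifying which extreme eigenvalue governs convergence versus divergence and confirming the sharp thresholds $2/\lambda_{\max}(Q)$ and $2/\lambda_{\min}(Q)$.
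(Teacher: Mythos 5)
Your proof is correct, and it is essentially the classical argument: the paper does not prove this theorem itself but cites Bertsekas (Ch.~1), where the result is established by exactly this route --- writing the error recursion $e_{k+1} = (I - CQ)e_k$, diagonalizing $Q$, and bounding the factors $\lvert 1 - C\lambda_i\rvert$ by $1$ from above or below according to the step-size condition. Your handling of part (2) --- noting that $C > 2/\lambda_{\min}(Q)$ forces \emph{every} factor to exceed $1$, so any nonzero initial error component diverges --- is precisely the point that makes the divergence hold for arbitrary $x_0 \neq \theta^*$, and the proof is complete as written.
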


The second observation is that the divergence has a particular pattern: namely, the iterates are diverging at an exponential rate of $|1 - 2C| = 1.2$ from the minimizer.

As we will show, these two observations are preserved as we move from gradient descent to SGD-$k$ (i.e., SGD with a batch size of $k$). First, our analysis of SGD-$k$ will generalize and include as a special case Theorem \ref{theorem:gd}.
Second, our analysis of SGD-$k$ will also preserve the exponential divergence properties observed for gradient descent.
Therefore, when we formulate our deterministic mechanism for nonconvex problems, these two properties will be preserved in its formulation. 

Importantly, in \S \ref{section:homogeneous} and \S \ref{section:inhomogeneous}, we will verify our deterministic mechanism by determining if both of these observations hold in the nonconvex setting. Indeed, if these two properties are verified, then we will supply evidence in support of our determinsitic mechanism. Now, to establish our deterministic mechanism, we turn to generalizing Theorem \ref{theorem:gd} for SGD-$k$ on a generic quadratic problem.

\section{A Deterministic Mechanism} \label{section:stochastic-quadratic}
In classical convergence analysis, a standard technique is to locally analyze the behavior of the optimizer's iterates using a quadratic approximation to the objective function near the minimizer. Motivated by this standard technique, we begin by formulating a generic  stochastic quadratic problem, and derive some of its relevant geometric properties (\S \ref{subsection:stochastic-quadratic}). Then, we define and study the properties of SGD-$k$ on this stochastic quadratic problem to rigorously analyze the impact of geometry and batch size on the convergence and divergence properties of SGD-$k$ (\S \ref{subsection:analysis}). Guided by these results, we formulate our deterministic mechanism for divergence of SGD-$k$ on nonconvex problems, and discuss when the generic stochastic quadratic problem can be applied as a local analysis and when it falls short (\S \ref{subsection:deterministic-mechanism}).

\subsection{A Generic Stochastic Quadratic Problem} \label{subsection:stochastic-quadratic}

We will begin by defining the quadratic problem, derive some of its relevant properties, and discuss the two types of minimizers that can occur for the problem.

\begin{problem}[Quadratic Problem] \label{problem-sgd:quadratic}
Let $(\Omega,\mathcal{F},\mathbb{P})$ be a probability space. Let $Q \in \mathbb{R}^{p \times p}$ be a nonzero, symmetric, positive semidefinite random matrix and $r \in \mathbb{R}^p$ be a random vector in the image space of $Q$ (with probability one) such that (1) $\E{Q} \prec \infty$, (2) $\E{ Q \E{Q} Q} \prec \infty$, (3) $\E{Q \E{Q} r} $ is finite, and (4) $\E{r'Qr}$ is finite. Let $\lbrace (Q_N,r_N): N \in \mathbb{N} \rbrace$ be independent copies of $(Q,r)$. The quadratic problem is to use $\lbrace (Q_N,r_N) : N \in \mathbb{N} \rbrace$ to determine a $\theta^*$ such that
\begin{equation} \label{eqn-sgd-problem:quadratic-obj}
\theta^* \in \underset{\theta \in \mathbb{R}^p}{\argmin} \frac{1}{2}\theta'\E{Q}\theta + \E{r}'\theta.
\end{equation}
\end{problem}

There are several aspects of the formulation of Problem \ref{problem-sgd:quadratic} worth highlighting. First, $Q$ is required to be symmetric and positive semidefinite, and $r$ is required to be in the image space of $Q$. Together, these requirements on $(Q,r)$ ensure that for each $\omega$ in some probability one set in $\mathcal{F}$, there exists a minimizer for $0.5 \theta'Q\theta + r'\theta$. While this condition seems quite natural for the quadratic problem, it will cause some challenges for generalizing our results directly to the nonconvex case, which we will discuss further in \S \ref{subsection:deterministic-mechanism}. Second, we have required a series of assumptions on the moments of $(Q,r)$. As we will see in the analysis of SGD-$k$ on Problem \ref{problem-sgd:quadratic}, these moment assumptions will be essential to the stability of the iterates. Finally, we have not required that the solution, $\theta^*$, be unique. That is, we are not requiring a strongly convex problem. This generality is important as it will allow us to better approximate more exotic nonconvex problems. 

We now establish some basic geometric properties about the quadratic problem. We first establish a standard, more convenient reformulation of the objective function in (\ref{eqn-sgd-problem:quadratic-obj}). Then, we formulate some important geometric properties of the quadratic problem.

\begin{lemma} \label{lemma-sgd:objective-reformulation}
The objective function in (\ref{eqn-sgd-problem:quadratic-obj}) is equivalent to (up to an additive constant)
\begin{equation}
\frac{1}{2}(\theta - \theta^*)'\E{Q}(\theta - \theta^*),
\end{equation}
for any $\theta^*$ satisfying (\ref{eqn-sgd-problem:quadratic-obj}). 
\end{lemma}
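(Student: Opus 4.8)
The plan is to prove the identity by completing the square, while taking care that $\E{Q}$ is only positive semidefinite and hence possibly singular, so that $\theta^*$ is in general neither unique nor expressible as $-\E{Q}^{-1}\E{r}$. Write $A = \E{Q}$ and $b = \E{r}$, and let $g(\theta) = \tfrac{1}{2}\theta'A\theta + b'\theta$ denote the objective in (\ref{eqn-sgd-problem:quadratic-obj}). Since $A$ is symmetric positive semidefinite, $g$ is a smooth convex function, so $\theta^*$ is a global minimizer of $g$ if and only if it is a stationary point, i.e.\ if and only if $\nabla g(\theta^*) = A\theta^* + b = 0$. This first-order condition, rewritten as $A\theta^* = -b$, is the only property of $\theta^*$ I would use; crucially it avoids any inversion of the (possibly singular) matrix $A$.

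With this in hand, I would expand the candidate expression,
\[
\tfrac{1}{2}(\theta-\theta^*)'A(\theta-\theta^*) = \tfrac{1}{2}\theta'A\theta - \theta'A\theta^* + \tfrac{1}{2}\theta^{*\prime}A\theta^*,
\]
and substitute the stationarity relation $A\theta^* = -b$ into the cross term, which yields $-\theta'A\theta^* = \theta'b = b'\theta$ (the last equality because the quantity is a scalar), while the final term $\tfrac{1}{2}\theta^{*\prime}A\theta^*$ does not depend on $\theta$. Hence
\[
\tfrac{1}{2}(\theta-\theta^*)'A(\theta-\theta^*) = \underbrace{\tfrac{1}{2}\theta'A\theta + b'\theta}_{=\,g(\theta)} + \tfrac{1}{2}\theta^{*\prime}A\theta^*,
\]
which is exactly $g(\theta)$ plus the $\theta$-independent constant $\tfrac{1}{2}\theta^{*\prime}\E{Q}\theta^*$, establishing the claimed equivalence up to an additive constant.

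The only genuine subtlety, and the step I would be most careful about, is the passage through the stationarity condition rather than a closed-form solution: because the problem deliberately allows $\E{Q}$ to be singular and $\theta^*$ non-unique, the usual ``complete the square by plugging in $\theta^* = -A^{-1}b$'' shortcut is unavailable. I would therefore emphasize that (i) convexity of $g$ makes first-order stationarity both necessary and sufficient for global optimality, so $A\theta^* = -b$ is legitimate for \emph{any} minimizer, and (ii) the identity holds for every such $\theta^*$, since the additive constant $\tfrac{1}{2}\theta^{*\prime}\E{Q}\theta^*$ is permitted to depend on the particular $\theta^*$ chosen. No appeal to the existence of a minimizer is needed beyond what the hypothesis ``for any $\theta^*$ satisfying (\ref{eqn-sgd-problem:quadratic-obj})'' already grants.
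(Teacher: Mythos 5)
Your proposal is correct and follows essentially the same route as the paper's proof: both rest on the first-order stationarity condition $\E{Q}\theta^* = -\E{r}$ (valid for any minimizer, with no inversion of the possibly singular $\E{Q}$), followed by expanding $\tfrac{1}{2}(\theta-\theta^*)'\E{Q}(\theta-\theta^*)$ and absorbing the $\theta$-independent term $\tfrac{1}{2}(\theta^*)'\E{Q}\theta^*$ into the additive constant. The only difference is cosmetic—you expand the quadratic form and recover the objective, while the paper substitutes into the objective and recovers the quadratic form—and your added remarks on convexity and non-uniqueness of $\theta^*$ are sound but not needed beyond the necessity of stationarity, which is all the paper uses.
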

\begin{proof}
If $\theta^*$ is a minimizer of the objective in (\ref{eqn-sgd-problem:quadratic-obj}), then $\theta^*$ must be a stationary point of the objective function. This implies that $-\E{Q}\theta^* = \E{r}$. Therefore, the objective in (\ref{eqn-sgd-problem:quadratic-obj}) is, up to an additive constant,
\begin{equation}
\frac{1}{2} \theta'\E{Q}\theta - \theta' \E{Q} \theta^* + \frac{1}{2}(\theta^*)'\E{Q} \theta^* = \frac{1}{2}(\theta - \theta^*)'\E{Q}(\theta - \theta^*),
\end{equation} 
\end{proof}

Because $Q$ is symmetric, positive semidefinite with probability one, $\E{Q}$ will also be symmetric, positive semidefinite. That is, $m := \mrank(\E{Q}) \leq p$. For future reference, we will denote the nonzero eigenvalues of $\E{Q}$ by $\lambda_1 \geq \cdots \lambda_m > 0$. Beyond the eigenvalues, we will also need some higher order curvature information, namely, $s_Q$ and $t_Q$, which are given by
\begin{equation} \label{eqn-sgd:tQ}
t_Q = \sup \left\lbrace \frac{v'\E{Q \E{Q} Q}v - v' \E{Q}^3 v}{v' \E{Q} v} : v \in \mathbb{R}^p, ~ v'\E{Q}v \neq 0 \right\rbrace,
\end{equation}
and
\begin{equation} \label{eqn-sgd:sQ}
s_Q = \inf \left\lbrace \frac{v'\E{Q \E{Q} Q}v - v' \E{Q}^3 v}{v' \E{Q} v} : v \in \mathbb{R}^p, ~ v'\E{Q}v \neq 0 \right\rbrace.
\end{equation}
From the definitions in (\ref{eqn-sgd:tQ}) and (\ref{eqn-sgd:sQ}), it follows that $t_Q \geq s_Q$. The next result establishes that for nontrivial problems, $s_Q > 0$. 

\begin{lemma} \label{lemma-sgd:higher-moment-bounds}
Let $Q$ be as in (\ref{problem-sgd:quadratic}). Then, the following properties hold.
\begin{enumerate}
\item If there is an $x \in \mathbb{R}^p$ such that $x' \E{Q} x = 0$ then $x' \E{ Q \E{Q} Q} x = 0$. 
\item $\E{Q \E{Q} Q} \succeq \E{Q}^3$. 
\item If $\Prb{Q = \E{Q}} < 1$ then $s_Q > 0$. 
\end{enumerate} 
\end{lemma}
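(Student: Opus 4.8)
The plan is to dispatch the three claims in order, since the later parts reuse the ideas of the earlier ones. For \textbf{(1)} I would use two standard facts about positive semidefinite matrices. Because $Q \succeq 0$ with probability one, $x'Qx \geq 0$ almost surely, so $x'\E{Q}x = \E{x'Qx} = 0$ forces the nonnegative random variable $x'Qx$ to vanish almost surely. For a positive semidefinite matrix, $x'Mx = 0$ implies $Mx = 0$; applying this pointwise gives $Qx = 0$ almost surely, and therefore $x'\E{Q\E{Q}Q}x = \E{(Qx)'\E{Q}(Qx)} = 0$, which is the claim.

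For \textbf{(2)} the key observation is that the difference of the two matrices is itself a matrix variance. Fixing $v \in \mathbb{R}^p$ and using $\E{Qv} = \E{Q}v$, one expands
\begin{equation}
v'\E{Q\E{Q}Q}v - v'\E{Q}^3 v = \E{\left(Qv - \E{Q}v\right)'\E{Q}\left(Qv - \E{Q}v\right)}.
\end{equation}
Since $\E{Q} \succeq 0$, the integrand is almost surely nonnegative, so the right-hand side is nonnegative for every $v$, which is exactly $\E{Q\E{Q}Q} \succeq \E{Q}^3$.

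For \textbf{(3)} write $B := Q - \E{Q}$, so that by the identity above the numerator defining $s_Q$ is $N(v) := \E{(Bv)'\E{Q}(Bv)} \geq 0$ and the denominator is $D(v) := v'\E{Q}v$; part (2) already gives $s_Q \geq 0$, so only strict positivity is at stake. Both $N$ and $D$ are homogeneous of degree two, and the argument of part (1) shows that if $v_0 \in \mnull(\E{Q})$ then $Qv_0 = 0$ almost surely, hence $Bv_0 = 0$ almost surely; consequently both $N$ and $D$ depend only on the projection of $v$ onto $\mcol(\E{Q})$. This lets me restrict the infimum to the unit sphere of $\mcol(\E{Q})$, which is compact and on which $D \geq \lambda_m > 0$. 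Since $N/D$ is continuous there, the infimum is attained at some $v^*$, and it remains to show $N(v^*) > 0$.

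The main obstacle is precisely this last step. If $N(v^*) = 0$, then because $\E{Q} \succeq 0$ the nonnegative random variable $(Bv^*)'\E{Q}(Bv^*)$ has zero mean and hence vanishes almost surely, which forces $(Q - \E{Q})v^* \in \mnull(\E{Q})$ almost surely. The lemma asserts that the nondegeneracy hypothesis $\Prb{Q = \E{Q}} < 1$ suffices to preclude this, and I expect this to be the delicate point: one must rule out the existence of a \emph{single} unit direction $v^* \in \mcol(\E{Q})$ along which $Q$ acts (modulo $\mnull(\E{Q})$) as the deterministic map $\E{Q}$, even though $Q$ itself fluctuates somewhere. Establishing this, as opposed to the mere global statement that $Q \neq \E{Q}$ with positive probability, is where I would concentrate the careful part of the argument, combining the positive semidefiniteness of $Q$ with the minimality of $v^*$.
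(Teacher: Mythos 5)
Your parts (1) and (2) are correct. Part (1) is the same argument the paper gives. For part (2) you take a different route: the paper invokes convexity of $M \mapsto x'M\E{Q}Mx$ on the cone of positive semidefinite matrices together with Jensen's inequality, while you use the explicit identity
\begin{equation}
v'\E{Q\E{Q}Q}v - v'\E{Q}^3 v \;=\; \E{\bigl((Q-\E{Q})v\bigr)'\E{Q}\bigl((Q-\E{Q})v\bigr)} \;\geq\; 0 .
\end{equation}
The two are equivalent in substance, but your identity is more transparent and, importantly, it exhibits exactly when the inequality is tight, which is what part (3) turns on. Your reduction of $s_Q$ to an infimum over the unit sphere of $\mcol(\E{Q})$ is also correct and cleaner than anything in the paper.

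The gap is in part (3), precisely where you flagged it, and it cannot be closed: the implication is false as stated. You reduce correctly to ruling out a unit vector $v^* \in \mcol(\E{Q})$ with $(Q-\E{Q})v^* \in \mnull(\E{Q})$ almost surely, and you ask whether $\Prb{Q = \E{Q}} < 1$ precludes such a direction. It does not. Take $p=2$ and $Q = \mathrm{diag}(1,Z)$ with $Z$ equal to $0$ or $2$ with probability $1/2$ each. Then $Q$ is nonzero, symmetric, positive semidefinite, $\E{Q} = I$, and $\Prb{Q = \E{Q}} = \Prb{Z=1} = 0 < 1$; yet $\E{Q\E{Q}Q} - \E{Q}^3 = \mathrm{diag}(0,1)$, so the direction $v = e_1$ (which has $v'\E{Q}v = 1 \neq 0$) gives a ratio of zero and hence $s_Q = 0$. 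Here $e_1$ is exactly the pathology you anticipated: $(Q - \E{Q})e_1 = 0$ almost surely even though $Q$ fluctuates in the orthogonal direction. So positivity of $s_Q$ requires a directionwise nondegeneracy hypothesis --- no $v$ with $v'\E{Q}v \neq 0$ and $\E{Q}^{1/2}(Q-\E{Q})v = 0$ almost surely --- which is strictly stronger than the global condition $\Prb{Q=\E{Q}}<1$; under that stronger hypothesis your compactness argument finishes the proof immediately. For comparison, the paper's own proof of (3) stumbles at the same spot: it rules out the Jensen equality case by exhibiting \emph{some} $x \neq 0$ with $f_x(Q) = x'Q\E{Q}Qx$ nonconstant, but since $s_Q$ is an infimum, equality must be excluded for \emph{every} admissible direction, and in the example above $f_{e_1}$ is constant on the support of $Q$ while $f_{e_2}$ is not. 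Your proposal therefore isolates the true obstruction more sharply than the paper does, but neither argument establishes (3) as stated.
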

\begin{proof}
For (1), when $x=0$ the result follows trivially. Suppose then that there is an $x \neq 0$ such that $x'\E{Q}x = 0$. Then, since $Q \succeq 0$, $x'Qx = 0$ almost surely. Therefore, $x$ is in the null space of all $Q$ on a probability one set. Therefore, $x'Q \E{Q} Q x = 0$ with probability one. The conclusion follows.

For (2), note that for any $x \in \mathbb{R}^q$, the function $f(M) = x'M \E{Q} Mx$ is convex over the space of all symmetric, positive semi-definite matrices. Therefore, by Jensen's inequality, $\E{f(Q)} \geq f(\E{Q})$. Moreover, since $x$ is arbitrary, the conclusion follows.

For (3), note that Jensen's inequality holds with equality only if $f$ is affine or if $\Prb{Q = \E{Q}} = 1$. Therefore, if $\Prb{Q = \E{Q}} < 1$, then the second condition is ruled out. Moreover, if $\Prb{Q = \E{Q}} < 1$, then $\Prb{Q = 0} < 1$. Thus, $\exists x \neq 0$ such that $f(Q) \neq 0$, which rules out the first condition.  
\end{proof}

Now, if $\Prb{Q = \E{Q}} = 1$ then there exists a solution to (\ref{eqn-sgd-problem:quadratic-obj}) such that $Q\theta^* + r = 0$ with probability one. This phenomenon of the existence of a single solution for all pairs $(Q,r)$ with probability one will play a special role, and motivates the following definition.

\begin{definition}[Homogeneous and Inhomogeneous Solutions]
A solution to $\theta^*$ satisfying (\ref{eqn-sgd-problem:quadratic-obj}) is called homogeneous if
\begin{equation} \label{eqn-sgd-definition:homogeneous}
\theta^* \in \argmin_{\theta \in \mathbb{R}^p} \frac{1}{2} \theta'Q \theta + r'\theta \text{ with probability one.}
\end{equation}
Otherwise, the solution is called inhomogeneous.
\end{definition}

In the case of a homogeneous minimizer, the objective function in (\ref{eqn-sgd-definition:homogeneous}) can be rewritten as (up to an additive constant)
\begin{equation}
\frac{1}{2} (\theta - \theta^*)' Q (\theta - \theta^*),
\end{equation}
which follows from the same reasoning Lemma \ref{lemma-sgd:objective-reformulation}. Importantly, SGD-$k$ will behave differently for problems with homogeneous minimizers and inhomogeneous minimizer. As we will show, for homogeneous minimizers, SGD-$k$ behaves rather similarly to classical gradient descent in terms of convergence and divergence, whereas for inhomogeneous minimizers, SGD-$k$ will behave markedly differently. To show these results, we will first define SGD-$k$ for the quadratic problem.

\begin{definition}[SGD-$k$] \label{definition-sgd:sgd-k}
Let $\theta_0 \in \mathbb{R}^p$ be arbitrary. For Problem \ref{problem-sgd:quadratic}, SGD-$k$ generates a sequence of iterates $\lbrace \theta_N : N \in \mathbb{N} \rbrace$ defined by
\begin{equation} \label{eqn-definition:sgd-k}
\theta_{N+1} = \theta_N - \frac{C_{N+1}}{k}\sum_{j=Nk+1}^{N(k+1)} Q_j\theta_N + r_j,
\end{equation} 
where $\lbrace C_N: N \in \mathbb{N} \rbrace$ is a sequence of scalars. 
\end{definition}

In Definition \ref{definition-sgd:sgd-k}, when $k = 1$, we have the usual notion of stochastic gradient descent and, as $k \to \infty$, we recover gradient descent. Therefore, this notion of SGD-$k$ will allow us to generate results that explore the complete range of stochastic to deterministic methods, and bridge the theory between stochastic gradient descent and gradient descent.

\subsection{SGD-$k$ on the Stochastic Quadratic Problem} \label{subsection:analysis}

\begin{table}
\tbl{Summary of Notation}
{
\begin{tabular}{@{}ll@{}} \toprule
Notation & Value \\ \midrule
$M$ & $\E{Q \E{Q} Q} - \E{Q}^3$ \\
$e_N$ & $(\theta_N - \theta^*)'\E{Q}(\theta_N - \theta^*)$ \\
$e_{N,l}$ & $(\theta_N - \theta^*)'\E{Q}^l(\theta_N - \theta^*)$ for $l \in \mathbb{N}_{\geq 2}$ \\
$e_{N,M}$ & $(\theta_N - \theta^*)'M (\theta_N - \theta^*)$ \\ 
$m$ & $\mrank( \E{Q} )$ \\
$\lambda_1 \geq \cdots \geq \lambda_m > 0$ & Nonzero eigenvalues of $\E{Q}$ \\ 
$t_Q, s_Q$ & Higher-order Curvature Parameters of $Q$ \\\bottomrule
\end{tabular}
}
\label{table:summary-of-notation}
\end{table}

Table \ref{table:summary-of-notation} summarizes the notation that will be used throughout the remainder of this work. With this notation, we now analyze the iterates generated by SGD-$k$ for Problem \ref{problem-sgd:quadratic}. Our first step is to establish a relationship between $e_{N+1}$ and $e_N$, which is a standard computation using conditional expectations.

\begin{lemma} \label{lemma-sgd:quadratic-recursion}
Let $\lbrace \theta_N : N \in \mathbb{N} \rbrace$ be the iterates of SGD-$k$ from Definition \ref{definition-sgd:sgd-k}. Let $\theta^*$ be a solution to the quadratic problem. Then, with probability one, 
\begin{equation}
\begin{aligned}
\cond{e_{N+1}}{\theta_N} &= e_{N} - 2C_{N+1}e_{N,2} + C_{N+1}^2 e_{N,3} + \frac{C_{N+1}^2}{k} e_{N,M} \\
						&+ 2\frac{C_{N+1}^2}{k}(\theta_N - \theta^*)' \E{ Q \E{Q} (Q\theta^* + r)} \\
						&+ \frac{C_{N+1}^2}{k} \E{ (Q\theta^* + r)' \E{Q}(Q \theta^* + r)}.
\end{aligned}
\end{equation}
Moreover, if $\theta^*$ is a homogeneous minimizer, then, with probability one,
\begin{equation}
\begin{aligned}
\cond{e_{N+1}}{\theta_N} &= e_{N} - 2C_{N+1} e_{N,2} + C_{N+1}^2 e_{N,3} + \frac{C_{N+1}^2}{k} e_{N,M}.
\end{aligned}
\end{equation}
\end{lemma}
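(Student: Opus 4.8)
The plan is to compute $\cond{e_{N+1}}{\theta_N}$ directly from the SGD-$k$ update by expanding the quadratic form and taking conditional expectations, exploiting the independence of the batch samples. First I would abbreviate the update by writing the stochastic gradient as $g_{N+1} = \frac{1}{k}\sum_{j} (Q_j \theta_N + r_j)$, so that $\theta_{N+1} - \theta^* = (\theta_N - \theta^*) - C_{N+1} g_{N+1}$. Then $e_{N+1} = (\theta_{N+1}-\theta^*)'\E{Q}(\theta_{N+1}-\theta^*)$ expands into three terms: the pure $e_N$ term, a cross term $-2C_{N+1}(\theta_N-\theta^*)'\E{Q}g_{N+1}$, and a quadratic term $C_{N+1}^2 g_{N+1}'\E{Q}g_{N+1}$. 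The key structural fact I would use repeatedly is that conditioned on $\theta_N$, the pairs $(Q_j, r_j)$ in the current batch are i.i.d.\ copies of $(Q,r)$ and independent of $\theta_N$, so $\cond{\cdot}{\theta_N}$ turns batch averages into population expectations.

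For the cross term, taking the conditional expectation gives $\cond{g_{N+1}}{\theta_N} = \E{Q}(\theta_N - \theta^*) + \E{Q\theta^* + r}$. I would argue that $\E{Q\theta^* + r} = \E{Q}\theta^* + \E{r} = 0$ by the stationarity condition $\E{Q}\theta^* = -\E{r}$ from the proof of Lemma \ref{lemma-sgd:objective-reformulation}; this collapses the cross term to $-2C_{N+1}(\theta_N - \theta^*)'\E{Q}^2(\theta_N-\theta^*) = -2C_{N+1}e_{N,2}$. The quadratic term is the delicate one: I would write $g_{N+1} = \frac{1}{k}\sum_j (Q_j(\theta_N - \theta^*) + (Q_j\theta^* + r_j))$, and compute the second moment $\cond{g_{N+1}'\E{Q}g_{N+1}}{\theta_N}$. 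Because the batch terms are i.i.d., the sum-of-products splits into $k$ diagonal (same-$j$) contributions and $k(k-1)$ off-diagonal (distinct-$j$) contributions; after dividing by $k^2$, the off-diagonal terms contribute a factor involving only first moments, while the diagonal terms supply the $1/k$ variance correction.

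The main obstacle, and the step deserving the most care, is organizing this second-moment computation so that the three $1/k$ terms emerge correctly. Writing $u = \theta_N - \theta^*$ and $w_j = Q_j\theta^* + r_j$, the diagonal terms give $\frac{1}{k}\E{(Q u + w)'\E{Q}(Q u + w)}$, which I would expand as $\frac{1}{k}\big(\E{u'Q\E{Q}Qu} + 2\E{u'Q\E{Q}w} + \E{w'\E{Q}w}\big)$, while the $k(k-1)$ off-diagonal terms combine with the appropriate weight to yield $\frac{k-1}{k}\big(\cond{g_{N+1}}{\theta_N}\big)'\E{Q}\big(\cond{g_{N+1}}{\theta_N}\big) = \frac{k-1}{k}e_{N,3}$, using again that $\E{w} = 0$. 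Adding the diagonal and off-diagonal pieces, the $e_{N,3}$ coefficient assembles as $\frac{k-1}{k} + \frac{1}{k}\cdot 1 = 1$ once the $\E{u'Q\E{Q}Qu}$ diagonal piece is split as $u'\E{Q}^3 u + u'M u = e_{N,3}$-contribution plus $e_{N,M}$, recalling $M = \E{Q\E{Q}Q} - \E{Q}^3$ from Table \ref{table:summary-of-notation}. This recovers the stated general formula; the homogeneous case then follows immediately because $w = Q\theta^* + r = 0$ with probability one kills the two remaining cross/variance terms involving $\E{Q\theta^*+r}$, leaving only the $e_{N,M}$ correction.
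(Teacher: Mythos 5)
Your proposal is correct and is precisely the ``straightforward calculation'' that the paper's proof invokes without writing out: expand $e_{N+1}$ around $u=\theta_N-\theta^*$, use independence of the batch from $\theta_N$, split the second moment of the averaged gradient into diagonal and off-diagonal parts, and use $\E{Q}\theta^*+\E{r}=0$ together with $M=\E{Q\E{Q}Q}-\E{Q}^3$; the homogeneous case follows, exactly as in the paper, from $Q\theta^*+r=0$ with probability one. No gaps; your write-up simply supplies the details the paper omits.
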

\begin{proof}
The result is a straightforward calculation from the properties of SGD-$k$ and the quadratic problem. In the case of the homogeneous minimizer, recall that $Q\theta^* +r = 0$ with probability one.
\end{proof} 

Our second step is to find bounds on $\cond{e_{N+1}}{\theta_N}$ in terms of $e_N$. While this is a standard thing to do, our approach differs from the standard approach in two ways. First, not only will we find an upper bound, but we will also find a lower bound. Second, our lower bounds will be much more delicate than the usual strong convexity and Lipschitz gradient based arguments \cite[see][\S 4]{bottou2016}. This second step can be accomplished for the homogeneous case with the following two lemmas.

\begin{lemma} \label{lemma-sgd:technical-systematic-component}
In the setting of Lemma \ref{lemma-sgd:quadratic-recursion} and for $\alpha_j \geq 0$ for $j=0,1,2,3$, 
\begin{equation} \label{eqn-sgd-lemma:technical-systematic-component}
\begin{aligned}
&\alpha_0e_N - \alpha_1 C_{N+1} e_{N,2} + \alpha_2 C_{N+1}^2 e_{N,3} + \alpha_3 C_{N+1}^2  e_{N,M}
\end{aligned}
\end{equation}
is bounded above by
\begin{equation}
e_N\left[ \alpha_0 - \alpha_1 C_{N+1} \lambda_j + \alpha_2 C_{N+1}^2 \lambda_j^2 + \alpha_3 C_{N+1}^2 t_Q \right], 
\end{equation}
where
\begin{equation}
j = \begin{cases}
1 & C_{N+1} > \frac{\alpha_1}{\alpha_2} \frac{1}{\lambda_1 + \lambda_m} \text{ or } C_{N+1} \leq 0 \\
m & \text{otherwise}.
\end{cases}
\end{equation}
Moreover, (\ref{eqn-sgd-lemma:technical-systematic-component}) is bounded below by
\begin{equation}
e_N\left[ \alpha_0 - \alpha_1 C_{N+1} \lambda_j + \alpha_2 C_{N+1}^2 \lambda_j^2 + \alpha_3 C_{N+1}^2 s_Q \right],
\end{equation}
where 
\begin{equation}
j = \begin{cases}
1 & C_{N+1} \in \left(0 ,\frac{\alpha_1}{\alpha_2} \frac{1}{\lambda_1 + \lambda_2}\right] \\
l ~(l \in \lbrace 2,\ldots,m-1 \rbrace) & C_{N+1} \in \left( \frac{\alpha_1}{\alpha_2} \frac{1}{\lambda_{l-1} + \lambda_l}, \frac{\alpha_1}{\alpha_2} \frac{1}{\lambda_l + \lambda_{l+1}} \right] \\
m & C_{N+1} > \frac{\alpha_1}{\alpha_2} \frac{1}{\lambda_{m-1} + \lambda_m} \text{ or } C_{N+1} \leq 0.
\end{cases}
\end{equation}
\end{lemma}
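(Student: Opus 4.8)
The plan is to reduce the whole expression to a weighted sum of values of a single univariate convex quadratic, evaluated at the eigenvalues of $\E{Q}$, and then optimize that quadratic over the finite eigenvalue set. Writing $w = \theta_N - \theta^*$ and abbreviating $C = C_{N+1}$, we have $e_N = w'\E{Q}w$, $e_{N,2} = w'\E{Q}^2 w$, $e_{N,3} = w'\E{Q}^3 w$, and $e_{N,M} = w'Mw$ with $M = \E{Q\E{Q}Q} - \E{Q}^3$. First I would peel off the curvature term: by the definitions of $t_Q$ and $s_Q$ we have $s_Q\, w'\E{Q}w \le w'Mw \le t_Q\, w'\E{Q}w$ whenever $w'\E{Q}w \neq 0$, while Lemma~\ref{lemma-sgd:higher-moment-bounds}(1) forces $w'Mw = 0$ whenever $w'\E{Q}w = 0$, so the sandwich $s_Q e_N \le e_{N,M} \le t_Q e_N$ holds for every $w$. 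Since $\alpha_3 C^2 \ge 0$, this bounds the last term of (\ref{eqn-sgd-lemma:technical-systematic-component}) above by $\alpha_3 C^2 t_Q e_N$ and below by $\alpha_3 C^2 s_Q e_N$, supplying exactly the $t_Q$ and $s_Q$ contributions in the two claimed bounds.

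It then remains to bound $\alpha_0 e_N - \alpha_1 C\, e_{N,2} + \alpha_2 C^2 e_{N,3}$. Diagonalizing the symmetric positive semidefinite matrix $\E{Q}$ with orthonormal eigenvectors $u_1,\dots,u_p$ (the first $m$ carrying $\lambda_1 \ge \cdots \ge \lambda_m > 0$ and the rest the zero eigenvalue) and writing $w = \sum_i c_i u_i$, each $w'\E{Q}^l w = \sum_{i=1}^m \lambda_i^l c_i^2$, so these three terms collapse to $\sum_{i=1}^m \lambda_i c_i^2\, g(\lambda_i)$, where $g(\lambda) = \alpha_0 - \alpha_1 C \lambda + \alpha_2 C^2 \lambda^2$ is convex in $\lambda$. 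The weights $\lambda_i c_i^2$ are nonnegative and sum to $e_N$, so this quantity lies between $\big(\min_{i} g(\lambda_i)\big) e_N$ and $\big(\max_{i} g(\lambda_i)\big) e_N$; the entire problem is thereby reduced to locating the extrema of the convex parabola $g$ over the finite set $\{\lambda_1,\dots,\lambda_m\}$.

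For the upper bound, convexity forces the maximum of $g$ over the eigenvalues to an endpoint, $\lambda_1$ or $\lambda_m$. I would compute $g(\lambda_1) - g(\lambda_m) = (\lambda_1 - \lambda_m)\big[\alpha_2 C^2(\lambda_1 + \lambda_m) - \alpha_1 C\big]$ and read off that $\lambda_1$ wins precisely when $C > \tfrac{\alpha_1}{\alpha_2}\tfrac{1}{\lambda_1 + \lambda_m}$, while for $C \le 0$ the sign of $-\alpha_1 C$ makes $g$ nondecreasing on $(0,\infty)$, so $\lambda_1$ again attains the maximum; this yields exactly the stated index $j$ and the upper bound $e_N[\alpha_0 - \alpha_1 C \lambda_j + \alpha_2 C^2 \lambda_j^2 + \alpha_3 C^2 t_Q]$.

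The delicate part, and the main obstacle, is the lower bound, where the minimum of a convex parabola over a discrete set need not sit at an endpoint but rather at the eigenvalue nearest the vertex $\lambda^\star = \tfrac{\alpha_1}{2\alpha_2 C}$. The clean way to pin this down is to compare consecutive eigenvalues: by the same factorization, $g(\lambda_l) \le g(\lambda_{l+1})$ holds iff $C \le \tfrac{\alpha_1}{\alpha_2}\tfrac{1}{\lambda_l + \lambda_{l+1}}$. Tracking where this inequality flips as $l$ grows shows the minimizing index is $1$ on $\big(0, \tfrac{\alpha_1}{\alpha_2}\tfrac{1}{\lambda_1+\lambda_2}\big]$, equals $l$ on the nested interval $\big(\tfrac{\alpha_1}{\alpha_2}\tfrac{1}{\lambda_{l-1}+\lambda_l}, \tfrac{\alpha_1}{\alpha_2}\tfrac{1}{\lambda_l + \lambda_{l+1}}\big]$, and equals $m$ beyond $\tfrac{\alpha_1}{\alpha_2}\tfrac{1}{\lambda_{m-1}+\lambda_m}$ (and for $C \le 0$, where $g$ is nondecreasing so the minimum lands at $\lambda_m$). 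Combining this index with the $s_Q$ contribution from the first paragraph gives the claimed lower bound. Degenerate cases ($\alpha_2 = 0$, or repeated eigenvalues making thresholds coincide) are harmless: the half-open interval convention fixes a consistent index, and ties among the $g(\lambda_i)$ render the choice immaterial.
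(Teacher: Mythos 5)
Your proposal is correct and follows essentially the same route as the paper: diagonalize $\E{Q}$, rewrite the first three terms as a nonnegative-weighted combination of the quadratic $g(\lambda)=\alpha_0-\alpha_1 C\lambda+\alpha_2 C^2\lambda^2$ over the nonzero eigenvalues, sandwich $e_{N,M}$ between $s_Q e_N$ and $t_Q e_N$, and then locate the extrema of $g$ over $\{\lambda_1,\dots,\lambda_m\}$. The only difference is that you explicitly carry out the endpoint and consecutive-eigenvalue comparisons (and the $e_N=0$ edge case via the sandwich) that the paper's proof asserts without detail, which is a welcome elaboration rather than a different method.
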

\begin{proof}
When $e_N = 0 $, then $e_{N,j} =0$ for $j=2,3$ and $e_{N,M}=0$ by Lemma \ref{lemma-sgd:higher-moment-bounds}. Therefore, if $e_N = 0$, the bounds hold trivially. So, we will assume that $e_N > 0$. 

Let $u_1,\ldots,u_m$ be the orthonormal eigenvectors corresponding to eigenvalues $\lambda_1,\ldots,\lambda_m$ of $\E{Q}$. Then, 
\begin{equation}
\frac{e_{N,j}}{e_N} = \sum_{l=1}^m \lambda_l^{j-1}\frac{\lambda_l (u_l'(\theta_N - \theta^*))^2}{e_N}.
\end{equation}
Denote the ratio on the right hand side by $w_l$, and note that $\lbrace w_l :l=1,\ldots,m\rbrace$ sum to one and are nonnegative. With this notation, we bound (\ref{eqn-sgd-lemma:technical-systematic-component}) from above by
\begin{equation}
e_N\left[ \alpha_0 - \alpha_1 C_{N+1} \sum_{l=1}^m \lambda_l w_l + \alpha_2 C_{N+1}^2 \sum_{l=1}^m \lambda_l^2 w_l + \alpha_3 C_{N+1}^2 t_Q \right],  
\end{equation}
and from below by the same equation but with $t_Q$ replaced by $s_Q$. By the properties of $\lbrace w_l : l=1,\ldots, m \rbrace$, we see that the bounds are composed of convex combinations of quadratics of the eigenvalues. Thus, for the upper bound, if we assign all of the weight to the eigenvalue that maximizes the polynomial $-\alpha_1 C_{N+1} \lambda + \alpha_2 C_{N+1}^2 \lambda^2$, then we will have the upper bound presented in the result. To compute the lower bounds, we do the analogous calculation.
\end{proof}

Using these two lemmas, we can conclude the following for the homogeneous case.

\begin{theorem}[Homogeneous Minimizer] \label{theorem-sgd:homogeneous}
Let $\lbrace \theta_N : N \in \mathbb{N} \rbrace$ be the iterates of SGD-$k$ from Definition \ref{definition-sgd:sgd-k}. Let $\theta^*$ be a homogeneous solution to the quadratic problem.

If $0 < C_{N+1}$ and 
\begin{equation} \label{eqn-sgd-theorem:homogeneous-ub}
C_{N+1} < \begin{cases}
\frac{2\lambda_1}{\lambda_1^2 + t_Q/k} & k > t_Q/(\lambda_1 \lambda_m) \\
\frac{2\lambda_m}{\lambda_m^2 + t_Q/k} & k \leq t_Q/(\lambda_1 \lambda_m),
\end{cases}
\end{equation}
then $\E{e_{N+1}} < \E{e_N}$. Moreover, if $\lbrace C_{N+1} \rbrace$ satisfy (\ref{eqn-sgd-theorem:homogeneous-ub}) and are uniformly bounded away from zero, then $\E{e_{N}}$ decays exponentially to zero.

Moreover, there exists a deterministic constant $\rho_N > 1$ such that $\cond{e_{N+1}}{\theta_N} > \rho_N e_N$ with probability one, if either $C_{N+1} < 0$ or 
\begin{equation} \label{eqn-sgd-theorem:homogeneous-lb}
C_{N+1} > \frac{2\lambda_j}{\lambda_j^2 + s_Q/k},
\end{equation}
where
\begin{equation}
j = \begin{cases}
1 & k \leq \frac{s_Q}{\lambda_1\lambda_2} \\
l ~(l \in \lbrace 2,\ldots,m-1 \rbrace) & \frac{s_Q}{\lambda_l\lambda_{l-1}} < k \leq  \frac{s_Q}{\lambda_{l+1}\lambda_l} \\
m & k > \frac{s_Q}{\lambda_m \lambda_{m-1}}.
\end{cases}
\end{equation}
\end{theorem}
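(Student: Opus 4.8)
The plan is to feed the homogeneous recursion of Lemma~\ref{lemma-sgd:quadratic-recursion} into the two-sided bounds of Lemma~\ref{lemma-sgd:technical-systematic-component} with the coefficients $\alpha_0 = 1$, $\alpha_1 = 2$, $\alpha_2 = 1$, and $\alpha_3 = 1/k$, since
\[
\cond{e_{N+1}}{\theta_N} = e_N - 2C_{N+1}e_{N,2} + C_{N+1}^2 e_{N,3} + \frac{C_{N+1}^2}{k}e_{N,M}
\]
is exactly expression (\ref{eqn-sgd-lemma:technical-systematic-component}) with those coefficients, and $\tfrac{\alpha_1}{\alpha_2} = 2$ turns the eigenvalue-gap thresholds of Lemma~\ref{lemma-sgd:technical-systematic-component} into $2/(\lambda_i + \lambda_j)$. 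Because the step sizes $C_{N+1}$ are deterministic scalars, the index $j$ selected in Lemma~\ref{lemma-sgd:technical-systematic-component} is deterministic, so each bound has the form $e_N\, g(C_{N+1})$ for an explicit deterministic factor $g$. Taking expectations and using $\E{e_{N+1}} = \E{\cond{e_{N+1}}{\theta_N}}$ together with $e_N \geq 0$ converts the pointwise bounds into bounds on $\E{e_{N+1}}$ in terms of $\E{e_N}$, so everything reduces to deciding when the scalar factor lies below, or above, one.

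For convergence I would use the upper bound, whose factor is $g_j(C) = (1 - C\lambda_j)^2 + C^2 t_Q/k$, so that for $C > 0$,
\[
g_j(C) - 1 = C\left[(\lambda_j^2 + t_Q/k)C - 2\lambda_j\right] < 0 \iff C < \frac{2\lambda_j}{\lambda_j^2 + t_Q/k}.
\]
Lemma~\ref{lemma-sgd:technical-systematic-component} forces $j = m$ when $C \leq 2/(\lambda_1 + \lambda_m)$ and $j = 1$ when $C > 2/(\lambda_1 + \lambda_m)$. Comparing $\frac{2\lambda_m}{\lambda_m^2 + t_Q/k}$ with $\frac{2}{\lambda_1 + \lambda_m}$ reduces to comparing $\lambda_1\lambda_m$ with $t_Q/k$, and the same reduction holds for $\frac{2\lambda_1}{\lambda_1^2+t_Q/k}$ versus $\frac{2}{\lambda_1+\lambda_m}$. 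Splitting on whether $k$ exceeds $t_Q/(\lambda_1\lambda_m)$ and gluing the two admissible sub-intervals for $j=m$ and $j=1$ then yields precisely the admissible interval in (\ref{eqn-sgd-theorem:homogeneous-ub}), giving $\E{e_{N+1}} < \E{e_N}$ whenever $\E{e_N} > 0$. For exponential decay, if the admissible $C_{N+1}$ are bounded away from zero and remain in a compact subinterval of (\ref{eqn-sgd-theorem:homogeneous-ub}), continuity of $g_j$ supplies a uniform factor $\rho := \sup_N g_j(C_{N+1}) < 1$, whence $\E{e_N} \leq \rho^N \E{e_0} \to 0$.

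The divergence claim runs symmetrically off the lower bound of Lemma~\ref{lemma-sgd:technical-systematic-component}, whose factor is $h_j(C) = (1 - C\lambda_j)^2 + C^2 s_Q/k$, with $j = j(C)$ now cycling through $1, 2, \dots, m$ as $C$ crosses the thresholds $2/(\lambda_{l-1}+\lambda_l)$; here $s_Q > 0$ by Lemma~\ref{lemma-sgd:higher-moment-bounds}(3) in the genuinely stochastic case $\Prb{Q = \E{Q}} < 1$. The case $C < 0$ makes $h_j(C) - 1 = C[(\lambda_j^2 + s_Q/k)C - 2\lambda_j] > 0$ at once, and for $C > 0$ one gets $h_j(C) > 1 \iff C > \frac{2\lambda_j}{\lambda_j^2 + s_Q/k}$. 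The crux is to certify that the threshold $C^\dagger := \frac{2\lambda_{j}}{\lambda_j^2 + s_Q/k}$ for the $k$-dependent index $j$ of the statement actually lands inside the range of step sizes on which Lemma~\ref{lemma-sgd:technical-systematic-component} selects that same index. Comparing $C^\dagger$ with the endpoints $2/(\lambda_{j-1}+\lambda_j)$ and $2/(\lambda_j+\lambda_{j+1})$ reduces, exactly as above, to comparing $\lambda_j\lambda_{j-1}$ and $\lambda_j\lambda_{j+1}$ with $s_Q/k$, i.e. to the conditions $\frac{s_Q}{\lambda_j\lambda_{j-1}} < k \leq \frac{s_Q}{\lambda_{j+1}\lambda_j}$ that define $j$ in the theorem; monotonicity of the endpoints then shows every $C > C^\dagger$ stays in a region whose selected index keeps $h_{j(C)}(C) > 1$. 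Setting $\rho_N := h_{j(C_{N+1})}(C_{N+1}) > 1$ finishes the claim on the event $e_N > 0$.

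The main obstacle I anticipate is this last piece of bookkeeping in the divergence part: unlike the upper bound, where Lemma~\ref{lemma-sgd:technical-systematic-component} offers only $j \in \{1, m\}$, the lower bound cycles through all $m$ indices, so one must verify both that $C^\dagger$ sits in the correct region and that no larger $C$ slips into a region where $h_{j(C)}(C)$ falls back to $1$. This requires the precise alignment between the eigenvalue-gap thresholds $2/(\lambda_{l-1}+\lambda_l)$ and the batch-size thresholds $s_Q/(\lambda_l\lambda_{l\pm 1})$, and it is the only genuinely delicate step; the remainder is the routine quadratic-in-$C$ analysis recorded above.
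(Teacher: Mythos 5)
Your proposal is correct and follows essentially the same route as the paper: it feeds the homogeneous recursion of Lemma~\ref{lemma-sgd:quadratic-recursion} into Lemma~\ref{lemma-sgd:technical-systematic-component} with $(\alpha_0,\alpha_1,\alpha_2,\alpha_3)=(1,2,1,1/k)$, analyzes the resulting quadratic-in-$C_{N+1}$ factors $(1-C\lambda_j)^2 + C^2 t_Q/k$ and $(1-C\lambda_j)^2 + C^2 s_Q/k$, and resolves the same threshold comparisons (e.g.\ $k \gtrless t_Q/(\lambda_1\lambda_m)$ and $s_Q/(\lambda_l\lambda_{l\pm 1}) \gtrless k$) to align the index selected by the lemma with the index in the theorem. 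The ``delicate bookkeeping'' you flag for the divergence case is precisely the step the paper's proof carries out via the equivalence between the batch-size thresholds $s_Q/(\lambda_l\lambda_{l\pm 1})$ and the step-size thresholds $2/(\lambda_l+\lambda_{l\pm 1})$, so no gap remains.
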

\begin{proof}
The upper and lower bounds follow from Lemma \ref{lemma-sgd:quadratic-recursion} and Lemma \ref{lemma-sgd:technical-systematic-component}, and observations that $k > t_Q/(\lambda_1 \lambda_m)$ implies
\begin{equation} \label{eqn-sgd-theorem:homogeneous-proof-1}
\frac{2\lambda_j}{\lambda_j^2 + t_Q/k} > \frac{2}{\lambda_1 + \lambda_m},
\end{equation}
for $j = 1,m$. For convergence, when $C_{N+1} > 0$ and strictly less than expression on the left hand side of (\ref{eqn-sgd-theorem:homogeneous-proof-1}), the upper bound in Lemma \ref{lemma-sgd:technical-systematic-component} is strictly less than one. Thus, if $\lbrace C_{N+1} \rbrace$ are uniformly bounded away from zero, the exponential convergence rate of $\E{e_N}$ follows trivially.

For the lower bound, we make note of several facts. First, if $k \geq \frac{s_Q}{\lambda_l \lambda_{l+1}}$ then $k \geq \frac{s_Q}{\lambda_j \lambda_{j+1}}$ for $j=1,2,\ldots,l$. Moreover, when $k \geq \frac{s_Q}{\lambda_l \lambda_{l+1}}$, then 
\begin{equation}
\frac{2\lambda_l}{\lambda_l^2 + s_Q/k} \geq \frac{2}{\lambda_l + \lambda_{l+1}}.
\end{equation}
By Lemma \ref{lemma-sgd:technical-systematic-component}, the left hand side of this inequality is the lower bound on $C_{N+1}$ that guarantees that $\E{e_{N+1}} > e_N$ with probability one. Therefore, whenever $k \geq \frac{s_Q}{\lambda_l\lambda_{l+1}}$, $\cond{e_N+1}{\theta_N}$ is not guaranteed to diverge. On the other hand, if $k < \frac{s_Q}{\lambda_l \lambda_{l-1}}$ then $k < \frac{s_Q}{\lambda_j \lambda_{j-1}}$ for $j=l,l+1,\ldots,m$. Moreover, if $k < \frac{s_Q}{\lambda_l \lambda_{l-1}}$ then
\begin{equation}
\frac{2\lambda_l}{\lambda_l^2 + s_Q/k} < \frac{2}{\lambda_l + \lambda_{l-1}}.
\end{equation}
Therefore, since the left hand side of this inequality is the lower bound on $C_{N+1}$ that guarantees that there exists a deterministic scalar $ \rho_N > 1$ such that $\cond{e_{N+1}}{\theta_N} > \rho_N e_N$ with probability one, the lower bound is guaranteed to diverge for $C_{N+1}$ larger than the right hand side.

Thus, by the monotonicty of the eigenvalues, for the $l \in \lbrace 2,\ldots,m-1 \rbrace$ such that $\frac{s_Q}{\lambda_l \lambda_{l-1}} < k \leq \frac{s_Q}{\lambda_l\lambda_{l+1}}$, $\cond{e_{N+1}}{\theta_N} > \rho_N e_N$ if
\begin{equation}
C_{N+1} > \frac{2\lambda_l}{\lambda_l^2 + s_Q/k}.
\end{equation}
Note, we can handle the edge cases similarly.
\end{proof}
\begin{remark}
Because $\rho_N$ is a deterministic scalar quantity, we can conclude that $\E{e_{N+1}} > \rho_N \E{e_N}$. 
\end{remark}

Using Theorem \ref{theorem-sgd:homogeneous}, as $k \to \infty$, we recover Theorem \ref{theorem:gd}. That is, Theorem \ref{theorem-sgd:homogeneous} contains gradient descent as a special case. Moreover, Theorem \ref{theorem-sgd:homogeneous} also captures the exponential divergence property that we observed for gradient descent (see Figure \ref{figure:divergence-gd}). Theorem \ref{theorem-sgd:homogeneous} also contains additional subtleties. First, it captures SGD-$k$'s distinct phases for convergence and divergence, which depend on the batch size and expected geometry of the quadratic problem as represented by the eigenvalues, $t_Q$ and $s_Q$. Thus, as opposed to classical gradient descent, SGD-$k$ requires a more complex convergence and divergence statement to capture these distinct phases. 

We now state the analogous statement for the inhomogeneous problem. However, we will need the following additional lemmas to bound the behavior of the cross term in Lemma \ref{lemma-sgd:quadratic-recursion}. Again, our analysis below is more delicate than the standard approach for analyzing SGD. 

\begin{lemma} \label{lemma-sgd:technical-pseudo-inverse}
Let $Q$ and $r$ be as in Problem \ref{problem-sgd:quadratic}. Then, letting $(\cdot)^\dagger$ denote the Moore-Penrose pseudo-inverse, 
\begin{equation}
\E{ Q \E{Q}(Q \theta^* + r )} = \E{Q}\E{Q}^\dagger\E{ Q \E{Q}(Q \theta^* + r )} .
\end{equation}
\end{lemma}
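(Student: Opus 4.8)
The plan is to read the claimed identity as the assertion that the vector $v := \E{Q\E{Q}(Q\theta^* + r)}$ is a fixed point of the operator $\E{Q}\E{Q}^\dagger$. Since $\E{Q}$ is symmetric and positive semidefinite, $\E{Q}\E{Q}^\dagger$ is exactly the orthogonal projection onto the column space $\mcol(\E{Q})$. Hence $\E{Q}\E{Q}^\dagger v = v$ holds if and only if $v \in \mcol(\E{Q})$, and the entire lemma reduces to proving this membership. This reframing is the conceptual heart of the argument: once the pseudo-inverse is recognized as a projector, everything else is a null-space computation.

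To establish $v \in \mcol(\E{Q})$, I would use that $\E{Q}$ is symmetric, so $\mcol(\E{Q})$ is precisely the orthogonal complement of $\mnull(\E{Q})$. Thus it suffices to show that $x'v = 0$ for every $x$ with $\E{Q}x = 0$. Fix such an $x$. Then $x'\E{Q}x = 0$, and because $Q \succeq 0$ with probability one, this forces $x'Qx = 0$ almost surely; since each realization of $Q$ is positive semidefinite, $x'Qx = 0$ implies $Qx = 0$, and by symmetry $x'Q = 0$, almost surely. This is exactly the almost-sure reduction already used in Lemma \ref{lemma-sgd:higher-moment-bounds}(1), so I would simply invoke that reasoning.

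Substituting, I would then compute
\begin{equation}
x'v = \E{ x'Q \E{Q}(Q\theta^* + r)} = \E{0} = 0,
\end{equation}
where pulling the deterministic row vector $x'$ inside the expectation is justified by the finiteness of $\E{Q\E{Q}(Q\theta^*+r)}$ guaranteed by the moment assumptions of Problem \ref{problem-sgd:quadratic}. Since this holds for every $x \in \mnull(\E{Q})$, we obtain $v \perp \mnull(\E{Q})$, hence $v \in \mcol(\E{Q})$, and the identity follows.

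The proof is short, so there is no substantial computational obstacle; the only two points requiring care are correctly identifying $\E{Q}\E{Q}^\dagger$ as the orthogonal projector onto $\mcol(\E{Q})$ (rather than onto some other subspace), and the almost-sure step $x'Qx = 0 \Rightarrow Qx = 0$, which rests essentially on the almost-sure positive semidefiniteness of $Q$ assumed in Problem \ref{problem-sgd:quadratic}.
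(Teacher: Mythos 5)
Your proof is correct and follows essentially the same route as the paper's: both rest on the observation that $\mnull(\E{Q}) \subset \mnull(Q)$ almost surely (via $x'\E{Q}x = 0 \Rightarrow x'Qx = 0$ a.s. $\Rightarrow Qx = 0$ a.s.), combined with the fact that $\E{Q}\E{Q}^\dagger$ acts as the identity on $\mcol(\E{Q}) = \mrow(\E{Q})$. The only cosmetic difference is that the paper shows the random vector $Q\E{Q}(Q\theta^* + r)$ lies in $\mrow(\E{Q})$ almost surely and then takes expectations, whereas you test the expectation directly against null vectors of $\E{Q}$; these are dual formulations of the same argument.
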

\begin{proof}
Note that for any $x \in \mrow(\E{Q})$, $x = \E{Q}\E{Q}^\dagger x$. Thus, we must show that $\E{Q \E{Q} (Q \theta^* + r)}$ is in $\mrow(\E{Q})$. Recall, that if there exists a $v \in \mathbb{R}^p$ such that $\E{Q}v = 0$, then $v' \E{Q} v = 0$. In turn, $v'Qv = 0$ with probability one, which implies that $Q v = 0$ with probability one. Hence, $\mnull(\E{Q}) \subset \mnull( Q)$ with probability one. Let the set of probability one be denoted by $\Omega'$. Then, 
\begin{equation}
\mrow( \E{Q}) = \mnull(\E{Q})^\perp \supset \bigcup_{\omega \in \Omega'} \mnull(Q)^\perp = \bigcup_{\omega \in \Omega'} \mcol( Q) \supset \bigcup_{\omega \in \Omega'} \mcol( Q \E{Q} ),
\end{equation}
where $\mrow(Q) = \mcol(Q)$ by symmetry. Therefore, $Q \E{Q} (Q\theta^* + r) \in \mrow(\E{Q})$ with probability one. Hence, its expectation is in $\mrow(\E{Q})$. 
\end{proof}

\begin{lemma} \label{lemma-sgd:technical-cross-term-bound}
Under the setting of Lemma \ref{lemma-sgd:quadratic-recursion}, for any $\phi > 0$ and $j \in \mathbb{N}$,  
\begin{equation}
\begin{aligned}
&2 \left\vert\frac{C_{N+1}^2}{k} (\theta_N - \theta^*)' \E{Q \E{Q}(Q\theta^* + r)}\right\vert \\
&\quad \leq 2 \frac{C_{N+1}^2}{k} \norm{ \E{Q}^{j/2} (\theta_N - \theta^*)}_2 \norm{(\E{Q}^{j/2})^\dagger \E{Q \E{Q}(Q\theta^* + r)}}_2 \\
&\quad \leq \frac{\phi C_{N+1}^2}{k} (\theta_N - \theta^*)' \E{Q}^j (\theta_N - \theta^*) \\
&\quad \quad + \frac{C_{N+1}^2}{\phi k}\E{(Q\theta^* + r)' \E{Q} Q} ( \E{Q}^j)^\dagger \E{Q \E{Q}(Q \theta^* + r)},
\end{aligned}
\end{equation}
for any $N = 0,1,2,\ldots$.
\end{lemma}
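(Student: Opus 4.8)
The plan is to combine the column-space identity from Lemma~\ref{lemma-sgd:technical-pseudo-inverse} with a Cauchy--Schwarz step and a weighted Young's inequality. Write $v := \theta_N - \theta^*$ and $b := \E{Q \E{Q}(Q\theta^* + r)}$ for brevity, so the quantity to be bounded is $2(C_{N+1}^2/k)\left\vert v'b \right\vert$. The enabling observation is that $b \in \mcol(\E{Q})$: Lemma~\ref{lemma-sgd:technical-pseudo-inverse} states $b = \E{Q}\E{Q}^\dagger b$, and since $\E{Q}\E{Q}^\dagger$ is the orthogonal projector onto $\mcol(\E{Q})$, this says exactly that $b$ lies in that column space. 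Because $\E{Q}$ is symmetric positive semidefinite, its positive powers all share the same column space, so $\mcol(\E{Q}^{j/2}) = \mcol(\E{Q})$ for every $j \in \mathbb{N}$, and hence the projector $\E{Q}^{j/2}(\E{Q}^{j/2})^\dagger$ fixes $b$.

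With that fact in hand I would insert the identity $b = \E{Q}^{j/2}(\E{Q}^{j/2})^\dagger b$ into the inner product and use symmetry of $\E{Q}^{j/2}$ to regroup as
\begin{equation}
v'b = v'\E{Q}^{j/2}(\E{Q}^{j/2})^\dagger b = \left(\E{Q}^{j/2}v\right)'\left((\E{Q}^{j/2})^\dagger b\right).
\end{equation}
Applying Cauchy--Schwarz to this factored form and multiplying by $2C_{N+1}^2/k$ yields the first displayed inequality. For the second inequality I would apply Young's inequality $2\alpha\beta \leq \phi\alpha^2 + \phi^{-1}\beta^2$ (valid for $\alpha,\beta \geq 0$ and any $\phi > 0$) with $\alpha = \norm{\E{Q}^{j/2}v}_2$ and $\beta = \norm{(\E{Q}^{j/2})^\dagger b}_2$. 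It then remains to identify the two squared norms with the claimed quadratic forms: $\norm{\E{Q}^{j/2}v}_2^2 = v'\E{Q}^j v = (\theta_N-\theta^*)'\E{Q}^j(\theta_N-\theta^*)$ directly, and $\norm{(\E{Q}^{j/2})^\dagger b}_2^2 = b'\left[(\E{Q}^{j/2})^\dagger\right]^2 b = b'(\E{Q}^j)^\dagger b$, where the last equality uses that for a positive semidefinite matrix the pseudo-inverse of the square root squares to the pseudo-inverse of the matrix. Expanding $b' = \E{(Q\theta^*+r)'\E{Q}Q}$ by symmetry of $Q$ and $\E{Q}$ recovers exactly the final bound after multiplying through by $C_{N+1}^2/k$.

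The routine parts here are the Cauchy--Schwarz and Young steps; the delicate points I would be most careful about are the pseudo-inverse manipulations, namely verifying $\mcol(\E{Q}^{j/2}) = \mcol(\E{Q})$ so that the projector genuinely fixes $b$, and the identity $\left[(\E{Q}^{j/2})^\dagger\right]^2 = (\E{Q}^j)^\dagger$. Both follow cleanly from the spectral decomposition of the symmetric positive semidefinite matrix $\E{Q}$ (the pseudo-inverse acts by inverting the nonzero eigenvalues on the same eigenbasis), so I expect no substantive obstacle, only the need to state these spectral facts explicitly to justify passing through the half-power $\E{Q}^{j/2}$.
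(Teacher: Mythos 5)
Your proposal is correct and follows essentially the same route as the paper: insert the projector $\E{Q}^{j/2}(\E{Q}^{j/2})^\dagger$ (justified via Lemma \ref{lemma-sgd:technical-pseudo-inverse}), apply Cauchy--Schwarz (the paper invokes H\"{o}lder's inequality, which is the same step here), and finish with the weighted Young inequality $2|\varphi x| \leq \phi x^2 + \phi^{-1}\varphi^2$. The only difference is that you spell out the spectral facts ($\mcol(\E{Q}^{j/2}) = \mcol(\E{Q})$ and $\left[(\E{Q}^{j/2})^\dagger\right]^2 = (\E{Q}^j)^\dagger$) that the paper leaves implicit, which is a reasonable addition rather than a deviation.
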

\begin{proof}
We will make use of three facts: Lemma \ref{lemma-sgd:technical-pseudo-inverse}, H\"{o}lder's inequality, and the fact that for any $\phi > 0$, $ \varphi \geq 0$ and $x \in \mathbb{R}$, $2 |\varphi x| \leq \phi x^2 + \phi^{-1} \varphi^2.$ Using these facts, we have
\begin{equation}
\begin{aligned}
&2 \left\vert\frac{C_{N+1}^2}{k} (\theta_N - \theta^*)' \E{Q \E{Q}(Q\theta^* + r)}\right\vert \\
&\quad = 2 \left\vert\frac{C_{N+1}^2}{k} (\theta_N - \theta^*)'\E{Q}^{j/2}(\E{Q}^{j/2})^\dagger \E{Q \E{Q}(Q\theta^* + r)}\right\vert\\
&\quad \leq 2 \frac{C_{N+1}^2}{k} \norm{ \E{Q}^{j/2} (\theta_N - \theta^*)}_2 \norm{(\E{Q}^{j/2})^\dagger \E{Q \E{Q}(Q\theta^* + r)}}_2 \\
&\quad \leq \frac{\phi C_{N+1}^2}{k} (\theta_N - \theta^*)' \E{Q}^j (\theta_N - \theta^*) \\
&\quad +   \frac{C_{N+1}^2}{\phi k}\E{(Q\theta^* + r)' \E{Q} Q} ( \E{Q}^j)^\dagger \E{Q \E{Q}(Q \theta^* + r)}.
\end{aligned}
\end{equation}
\end{proof}

\begin{lemma} \label{lemma-sgd:quadratic-recursion-ineq}
Under the setting of Lemma \ref{lemma-sgd:quadratic-recursion},
\begin{equation}
\begin{aligned}
\cond{e_{N+1}}{\theta_N} &\leq e_{N} - 2C_{N+1} e_{N,2} + C_{N+1}^2 e_{N,3} + \frac{C_{N+1}^2}{k} e_{N,M} + \frac{C_{N+1}^2}{k} e_{N,3} \\
						&+ \frac{C_{N+1}^2}{k} \E{ (Q\theta^* + r)' \E{Q}(Q \theta^* + r)} \\
						&+ \frac{C_{N+1}^2}{k} \E{(Q\theta^*+r)'\E{Q} Q} ( \E{Q}^3)^\dagger \E{Q \E{Q}(Q \theta^* + r)}
\end{aligned}
\end{equation}
Now, for any $\varphi > 0$, $\exists \psi \in \mathbb{R}$ such that 
\begin{equation}
\begin{aligned}
\cond{e_{N+1}}{\theta_N} &\geq \left(1 - \frac{\varphi}{k} \right) e_{N} - 2C_{N+1}e_{N,2}  + C_{N+1}^2 e_{N,3} + \frac{C_{N+1}^2}{k} e_{N,M} \\
						&+ \frac{C_{N+1}^2}{k} \psi,
\end{aligned}
\end{equation}
where $\psi \geq 0$ if
\begin{equation}
\varphi\frac{\E{ (Q\theta^* + r)' \E{Q}(Q \theta^* + r)}}{\E{(Q\theta^*+r)'\E{Q} Q} \E{Q}^\dagger \E{Q \E{Q}(Q \theta^* + r)}} \geq C_{N+1}^2.
\end{equation}
\end{lemma}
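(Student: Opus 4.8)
The plan is to treat both the upper and lower bounds as direct consequences of the cross-term estimate in Lemma~\ref{lemma-sgd:technical-cross-term-bound}, applied to the exact conditional recursion from Lemma~\ref{lemma-sgd:quadratic-recursion}. The key observation is that the only term separating either bound from the exact expression for $\cond{e_{N+1}}{\theta_N}$ is the cross term $2\frac{C_{N+1}^2}{k}(\theta_N - \theta^*)'\E{Q \E{Q}(Q\theta^* + r)}$; every other term in Lemma~\ref{lemma-sgd:quadratic-recursion} is already in the desired form. Hence the whole argument reduces to choosing the free parameters $\phi$ and $j$ in Lemma~\ref{lemma-sgd:technical-cross-term-bound} appropriately for each direction and substituting.

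For the upper bound, I would bound the cross term using $x \leq |x|$ and then invoke Lemma~\ref{lemma-sgd:technical-cross-term-bound} with $\phi = 1$ and $j = 3$. With these choices the $\phi$-weighted quadratic form becomes $(\theta_N - \theta^*)'\E{Q}^3(\theta_N - \theta^*) = e_{N,3}$, which reproduces exactly the extra $\frac{C_{N+1}^2}{k} e_{N,3}$ term and the residual pseudo-inverse term $\frac{C_{N+1}^2}{k}\E{(Q\theta^*+r)'\E{Q}Q}(\E{Q}^3)^\dagger\E{Q\E{Q}(Q\theta^*+r)}$ appearing in the stated upper bound. Substituting this into Lemma~\ref{lemma-sgd:quadratic-recursion} yields the first claim immediately.

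For the lower bound, I would instead use $x \geq -|x|$ and apply Lemma~\ref{lemma-sgd:technical-cross-term-bound} with $j = 1$ (so the quadratic form is $e_N$ itself) and the step-size-dependent scaling $\phi = \varphi / C_{N+1}^2$. This is the one genuinely delicate step: the choice $\phi = \varphi / C_{N+1}^2$ is engineered so that the $e_N$-coefficient correction $-\frac{\phi C_{N+1}^2}{k} e_N$ collapses to exactly $-\frac{\varphi}{k} e_N$, producing the target factor $(1 - \varphi/k)$ in front of $e_N$ independently of the step size. The residual term from the cross-term bound is then $-\frac{C_{N+1}^4}{\varphi k}\E{(Q\theta^*+r)'\E{Q}Q}\E{Q}^\dagger\E{Q\E{Q}(Q\theta^*+r)}$, which I would combine with the remaining term $\frac{C_{N+1}^2}{k}\E{(Q\theta^*+r)'\E{Q}(Q\theta^*+r)}$ of the recursion. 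Factoring out $\frac{C_{N+1}^2}{k}$ from these two residuals defines $\psi$ explicitly, and $\psi \geq 0$ is precisely the stated inequality after rearranging $\E{(Q\theta^*+r)'\E{Q}(Q\theta^*+r)} \geq \frac{C_{N+1}^2}{\varphi}\E{(Q\theta^*+r)'\E{Q}Q}\E{Q}^\dagger\E{Q\E{Q}(Q\theta^*+r)}$.

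The main obstacle, and really the only step beyond routine substitution, is identifying the correct parameter choices $(\phi,j)$ for each direction, and in particular the scaling $\phi = \varphi/C_{N+1}^2$ that normalizes the $e_N$ coefficient in the lower bound. A minor point I would verify is that the denominator $\E{(Q\theta^*+r)'\E{Q}Q}\E{Q}^\dagger\E{Q\E{Q}(Q\theta^*+r)}$ is well-defined and nonnegative, so that the sign condition on $\psi$ is meaningful; this is exactly where Lemma~\ref{lemma-sgd:technical-pseudo-inverse} enters, since it guarantees that $\E{Q\E{Q}(Q\theta^*+r)}$ lies in $\mrow(\E{Q})$ and the pseudo-inverse acts consistently.
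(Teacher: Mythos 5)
Your proposal follows the paper's own proof essentially verbatim: the same parameter choices in Lemma~\ref{lemma-sgd:technical-cross-term-bound} ($\phi = 1$, $j = 3$ for the upper bound; $\phi = \varphi C_{N+1}^{-2}$, $j = 1$ for the lower bound), the same explicit $\psi$, and the same rearrangement giving the sign condition. The only detail the paper adds is a separate (trivial) treatment of the case $C_{N+1} = 0$, where your scaling $\phi = \varphi / C_{N+1}^2$ is undefined but the bounds follow directly from Lemma~\ref{lemma-sgd:quadratic-recursion}.
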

\begin{proof}
When $C_{N+1} = 0,$ the statements hold by Lemma \ref{lemma-sgd:quadratic-recursion}. Suppose $C_{N+1} \neq 0$. Using Lemma \ref{lemma-sgd:quadratic-recursion} and applying Lemma \ref{lemma-sgd:technical-cross-term-bound} with $\phi =1$ and $j=3$, the upper bound follows. Now, using Lemma \ref{lemma-sgd:quadratic-recursion} and applying Lemma \ref{lemma-sgd:technical-cross-term-bound} with $\phi =  C_{N+1}^{-2} \varphi$ for $\varphi > 0$ and $j=1$, we have
\begin{equation}
\begin{aligned}
\cond{e_{N+1}}{\theta_N} &\geq \left(1 - \frac{\varphi}{k} \right) e_{N} - 2C_{N+1} e_{N,2} + C_{N+1}^2 e_{N,3} + \frac{C_{N+1}^2}{k} e_{N,M}  \\
						&+ \frac{C_{N+1}^2}{k} \psi,
\end{aligned}
\end{equation}
where
\begin{equation}
\begin{aligned}
\psi &=  - \frac{C_{N+1}^2}{\varphi}  \E{(Q\theta^*+r)'\E{Q} Q}\E{Q}^\dagger \E{Q \E{Q}(Q \theta^* + r)} \\
&\quad + \E{ (Q\theta^* + r)' \E{Q}(Q \theta^* + r)}.
\end{aligned}
\end{equation}
Therefore, if $\varphi$ is given as selected in the statement of the result, then $\psi \geq 0$.
\end{proof}

\begin{theorem}[Inhomogeneous Minimizer] \label{theorem-sgd:inhomogeneous}
Let $\lbrace \theta_N : N \in \mathbb{N} \rbrace$ be the iterates of SGD-$k$ from Definition \ref{definition-sgd:sgd-k}.  Suppose $\theta^*$ is an inhomogeneous solution to the quadratic problem.

If $0 < C_{N+1}$ and
\begin{equation} \label{eqn-theorem-sgd:inhomogeneous-ub}
C_{N+1} < \begin{cases}
\frac{2\lambda_1^2}{(1 + 1/k) \lambda_1^2 + t_Q/k } & k+1 > t_Q/(\lambda_1 \lambda_m) \\
\frac{2\lambda_m^2}{(1 + 1/k) \lambda_m^2 + t_Q/k } & k+1 \leq t_Q/(\lambda_1\lambda_m), 
\end{cases}
\end{equation}
then $\E{e_{N+1}} < \rho_N \E{e_{N}} + C_{N+1}^2 \frac{\psi}{k}$, where $\rho_N < 1$ and is uniformly bounded away from zero for all $N$, and $\psi > 0$. Moreover, if $\lbrace C_{N} : N \in \mathbb{N} \rbrace$ are nonnegative, converge to zero and sum to infinity then $\E{e_{N}} \to 0$ as $N \to \infty$.

Furthermore, suppose $C_{N+1} < 0$ or 
\begin{equation} \label{eqn-theorem-sgd:inhomogeneous-lb}
C_{N+1} > \frac{2(\lambda_j + \gamma)}{\lambda_j^2 + s_Q/k},
\end{equation} 
where $4\gamma^2 \in \left(0, \frac{s_Q}{k} \right]$
and 
\begin{equation}
j = \begin{cases}
1 & k \leq \frac{s_Q}{ \lambda_1 \lambda_2 + \gamma(\lambda_1 + \lambda_2)} \\
l ~(l \in \lbrace 2,\ldots,m-1 \rbrace) & \frac{s_Q}{\lambda_l\lambda_{l-1} + \gamma(\lambda_l + \lambda_{l-1})} < k \leq \frac{s_Q}{\lambda_l \lambda_{l+1} + \gamma (\lambda_l + \lambda_{l+1})} \\
m & k > \frac{s_Q}{\lambda_m \lambda_{m-1} + \gamma(\lambda_m + \lambda_{m-1})}.
\end{cases}
\end{equation}
Then, $\exists \delta > 0$ such that if $\norm{\E{Q}(\theta_{N}-\theta^*)} < \delta$ then $\cond{e_{N+1}}{\theta_N} \geq \rho_N e_{N} + C_{N+1}^2 \psi$ with probability one, where $\rho_N > 1$ and $\psi > 0$ independently of $N$.
\end{theorem}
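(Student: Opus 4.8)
The plan is to treat the two halves of the statement separately, in both cases reducing to the scalar bounds already packaged in Lemma \ref{lemma-sgd:quadratic-recursion-ineq} and then feeding them through Lemma \ref{lemma-sgd:technical-systematic-component}, exactly as was done for the homogeneous case in Theorem \ref{theorem-sgd:homogeneous}. The only genuinely new feature relative to the homogeneous theorem is the cross term $2\frac{C_{N+1}^2}{k}(\theta_N-\theta^*)'\E{Q\E{Q}(Q\theta^*+r)}$ together with the constant noise term, both of which vanish when $\theta^*$ is homogeneous; controlling these is where the work lies.

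For the upper bound I would start from the first (upper) inequality of Lemma \ref{lemma-sgd:quadratic-recursion-ineq} and merge its two $e_{N,3}$ contributions into a single coefficient $(1+1/k)C_{N+1}^2 e_{N,3}$. Applying Lemma \ref{lemma-sgd:technical-systematic-component} with $(\alpha_0,\alpha_1,\alpha_2,\alpha_3)=(1,2,1+1/k,1/k)$ then bounds the $\theta_N$-dependent part by $e_N\left[1 - 2C_{N+1}\lambda_j + (1+1/k)C_{N+1}^2\lambda_j^2 + \frac{C_{N+1}^2}{k}t_Q\right]$, and a short computation shows the lemma's switch threshold $C_{N+1} > \frac{2}{(1+1/k)(\lambda_1+\lambda_m)}$ is equivalent to $k+1 > t_Q/(\lambda_1\lambda_m)$, which is exactly the case split in (\ref{eqn-theorem-sgd:inhomogeneous-ub}). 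Requiring this bracket to be strictly below one reproduces the stated bound on $C_{N+1}$, identifies $\rho_N$ as the bracket, and leaves $\frac{C_{N+1}^2}{k}\psi$ (with $\psi>0$) as the residual constant collecting the noise and the cross term's leftover. For the claim $\E{e_N}\to 0$, I would take expectations to obtain $\E{e_{N+1}}\le\rho_N\E{e_N}+\frac{C_{N+1}^2}{k}\psi$, write $\rho_N = 1-a_N$ with $a_N = 2\lambda_j C_{N+1}$ to leading order, and invoke a standard sequence lemma: since $C_N\to 0$ and $\sum C_N=\infty$ give $a_N\to 0$, $\sum a_N=\infty$, and $\frac{(C_{N+1}^2/k)\psi}{a_N}\to 0$, the recursion forces $\E{e_N}\to 0$.

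For the lower bound I would use the second (lower) inequality of Lemma \ref{lemma-sgd:quadratic-recursion-ineq} together with Lemma \ref{lemma-sgd:technical-systematic-component}, applied with $\alpha_0 = 1-\varphi/k$. The role of the free parameter $\gamma$ (subject to $4\gamma^2\le s_Q/k$) is to parametrize the Young-type split of the cross term furnished by Lemma \ref{lemma-sgd:technical-cross-term-bound}: charging a piece proportional to $e_N$ shifts the effective curvature in the threshold from $\lambda_j$ to $\lambda_j+\gamma$, which is precisely the numerator of (\ref{eqn-theorem-sgd:inhomogeneous-lb}) and explains why the $j$-selection thresholds acquire the extra $\gamma(\lambda_\cdot+\lambda_\cdot)$ terms, since $(\lambda_l+\gamma)(\lambda_{l+1}+\gamma)=\lambda_l\lambda_{l+1}+\gamma(\lambda_l+\lambda_{l+1})+\gamma^2$. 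Imposing that the resulting bracket strictly exceed one yields $\rho_N>1$ exactly when $C_{N+1}$ satisfies (\ref{eqn-theorem-sgd:inhomogeneous-lb}). Finally, the locality hypothesis $\norm{\E{Q}(\theta_N-\theta^*)}<\delta$ enters to guarantee $\psi>0$: the remaining cross-term contribution, bounded through Lemma \ref{lemma-sgd:technical-cross-term-bound}, is at most a constant multiple of $\delta$, so choosing $\delta$ small relative to the strictly positive noise magnitude $\E{(Q\theta^*+r)'\E{Q}(Q\theta^*+r)}$ keeps the net constant $C_{N+1}^2\psi$ strictly positive. The edge cases $j=1,m$ and $C_{N+1}<0$ are handled as in Theorem \ref{theorem-sgd:homogeneous}.

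I expect the crux to be the lower bound, specifically choosing the two free parameters so that both conclusions hold at once. The cross term is first-order and sign-indefinite, so the part of it that scales with $e_N$ must be charged against the curvature margin created by $\gamma$ (hence the $\lambda_j\to\lambda_j+\gamma$ shift, with $4\gamma^2\le s_Q/k$ keeping this charge within the available lower curvature $s_Q/k$), while the part that does not scale with $e_N$ must be dominated by the noise floor, which is only possible once $\theta_N$ is within distance $\delta$ of $\theta^*$. Verifying that these two demands are simultaneously satisfiable, and that $\rho_N$ stays uniformly above one and $\psi$ uniformly above zero independently of $N$, is the delicate accounting that constitutes the heart of the argument.
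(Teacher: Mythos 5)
Your route is the paper's route in both halves. For the upper bound you reduce via the first inequality of Lemma \ref{lemma-sgd:quadratic-recursion-ineq}, apply Lemma \ref{lemma-sgd:technical-systematic-component} with $(\alpha_0,\alpha_1,\alpha_2,\alpha_3)=(1,2,1+1/k,1/k)$, and observe that $k+1>t_Q/(\lambda_1\lambda_m)$ is precisely the condition making the convergence threshold exceed the switch point $\tfrac{2}{(1+1/k)(\lambda_1+\lambda_m)}$; the paper does exactly this, and your sequence-lemma argument for $\E{e_N}\to 0$ is the paper's citation of Lemma 1.5.2 of Bertsekas made explicit. For the lower bound you likewise follow the paper: the Young split of the cross term (Lemma \ref{lemma-sgd:technical-cross-term-bound} with $\phi=C_{N+1}^{-2}\varphi$) produces the deflated coefficient $1-\varphi/k$, completing the square gives the $\gamma$-shifted threshold, and the $j$-selection is resolved as in Theorem \ref{theorem-sgd:homogeneous}. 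One small correction there: the thresholds $\frac{s_Q}{\lambda_l\lambda_{l+1}+\gamma(\lambda_l+\lambda_{l+1})}$ carry no $\gamma^2$ term; they come from the exact equivalence $\frac{s_Q}{\lambda_j\lambda_{\tilde j}+\gamma(\lambda_j+\lambda_{\tilde j})}>k \iff \frac{2}{\lambda_j+\lambda_{\tilde j}}>\frac{2(\lambda_j+\gamma)}{\lambda_j^2+s_Q/k}$, not from the product $(\lambda_l+\gamma)(\lambda_{l+1}+\gamma)$ you invoke.

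The genuine gap sits exactly where you placed the crux, and your accounting does not close it. You cannot simultaneously (i) charge part of the cross term against $e_N$ via the Young split (this is what creates the $\gamma$-shift) and (ii) claim the $e_N$-free leftover is a constant multiple of $\delta$. Under the split with $\phi=C_{N+1}^{-2}\varphi$, the leftover is $\frac{C_{N+1}^4}{\varphi k}\E{(Q\theta^*+r)'\E{Q}Q}\E{Q}^\dagger\E{Q\E{Q}(Q\theta^*+r)}$, which involves no $\delta$ at all; and since $4\gamma^2\le s_Q/k$ pins $\varphi\le s_Q/(\lambda_j^2+s_Q/k)$ while the admissible $C_{N+1}$ range is unbounded above, this leftover eventually swamps the noise floor $\E{(Q\theta^*+r)'\E{Q}(Q\theta^*+r)}$ no matter how small $\delta$ is, so $\psi>0$ fails for large $C_{N+1}$ under your decomposition. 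The $O(\delta)$ bound you want comes from a different decomposition: Lemma \ref{lemma-sgd:technical-pseudo-inverse} plus Cauchy--Schwarz gives $2\frac{C_{N+1}^2}{k}\left\vert(\theta_N-\theta^*)'\E{Q\E{Q}(Q\theta^*+r)}\right\vert\le 2\frac{C_{N+1}^2}{k}\norm{\E{Q}(\theta_N-\theta^*)}_2\norm{\E{Q}^\dagger\E{Q\E{Q}(Q\theta^*+r)}}_2$, which the locality hypothesis bounds uniformly in $C_{N+1}$ --- but then nothing is charged to $e_N$, no $\gamma$-shift arises, and the stated threshold with $\gamma>0$ is simply slack over the homogeneous one. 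To be fair, the paper's own proof shares this soft spot (it never invokes $\delta$ and asserts only that $\psi_N\ge 0$ for $C_{N+1}$ sufficiently small, which sits uneasily with the divergence regime), so you have faithfully reconstructed the published argument including its weakest step; but as a self-contained proof, the step as you describe it would fail, and repairing it requires replacing the Young split by the direct locality bound on the cross term.
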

\begin{proof}
For the upper bound, we apply Lemmas \ref{lemma-sgd:quadratic-recursion-ineq} and \ref{lemma-sgd:technical-systematic-component} to conclude that
\begin{equation}
\cond{e_{N+1}}{\theta_N} \leq e_{N}\left(1 - 2 C_{N+1}\lambda_j + \left(1 + \frac{1}{k}\right) C_{N+1}^2\lambda_j^2 + C_{N+1}^2 \frac{t_Q}{k} \right) + C_{N+1}^2 \frac{\psi}{k},
\end{equation}
where $\psi > 0$ is independent of $N$ and
\begin{equation}
j = \begin{cases}
1 & C_{N+1} \leq 0 \text{ or } \frac{2}{(1 + 1/k)(\lambda_1 + \lambda_m)} \\
m & \text{otherwise}.
\end{cases}
\end{equation}
Moreover, the component multiplying $e_N$ is less than one if $C_{N+1} > 0$ and
\begin{equation} \label{eqn-sgd-theorem:proof-inhomo-1}
C_{N+1} < \frac{2\lambda_j}{\left( 1 + \frac{1}{k}\right)\lambda_j^2 + \frac{t_Q}{k}}.
\end{equation}
When $k + 1 > t_Q/(\lambda_1\lambda_m)$, the right hand side of (\ref{eqn-sgd-theorem:proof-inhomo-1}) is larger than $\frac{2}{(1 + 1/k)(\lambda_1 + \lambda_m)}$. The upper bound follows. Convergence follows by applying Lemma 1.5.2 of \cite{bertsekas1999}.

For the lower bound, note that since $\theta^*$ is inhomogeneous, $\Prb{Q = \E{Q}} < 1$. Therefore, $s_Q > 0$. Now, we apply Lemmas \ref{lemma-sgd:quadratic-recursion-ineq} and \ref{lemma-sgd:technical-systematic-component} to conclude that for any $\gamma > 0$ there exists $\psi_N$ such that
\begin{equation}
\cond{e_{N+1}}{\theta_N} \geq e_{N}\left(\left(1 - \frac{4\gamma^2}{\lambda_j^2 + \frac{1}{k}s_Q} \right) - 2 C_{N+1}\lambda_j + C_{N+1}^2\lambda_j^2 + C_{N+1}^2 \frac{s_Q}{k} \right) + C_{N+1}^2 \frac{\psi_N}{k},
\end{equation}
where $\psi_N$ are nonnegative and uniformly bounded from zero for $C_{N+1}$ sufficiently small, and
\begin{equation}
j = \begin{cases}
1 & C_{N+1} \in \left(0, \frac{2}{\lambda_1 + \lambda_2} \right] \\
l ~(l \in \lbrace 2,\ldots,m-1 \rbrace) & C_{N+1} \in \left( \frac{2}{\lambda_{l-1} + \lambda_l}, \frac{2}{\lambda_l + \lambda_{l+1}} \right] \\
m & C_{N+1} \leq 0 \text{ or } \frac{2}{\lambda_m + \lambda_{m-1}} < C_{N+1}.
\end{cases}
\end{equation}
The term multiplying $e_N$ can be rewritten as
\begin{equation} \label{eqn-sgd-theorem:proof-inhomo-2}
1 - \frac{4\gamma^2}{\lambda_j^2 + \frac{1}{k}s_Q} - \frac{\lambda_j^2}{\lambda_j^2 + \frac{1}{k}s_Q} + \left(\lambda_j^2 + \frac{1}{k}s_Q\right)\left( C_{N+1} - \frac{\lambda_j}{\lambda_j^2 + \frac{1}{k}s_Q} \right)^2,
\end{equation}
which nonnegative when $4\gamma^2$ is in the interval given by in the statement of the result. Moreover, if
\begin{equation} \label{eqn-sgd-theorem:proof-inhomo-3}
C_{N+1} > \frac{2(\lambda_j + \gamma)}{\lambda_j^2 + \frac{s_Q}{k}},
\end{equation}
then
\begin{equation}
C_{N+1}	> \frac{\lambda_j}{\lambda_j^2 + \frac{s_Q}{k}} + \frac{\sqrt{\lambda_j^2 + 4\gamma^2}}{\lambda_j^2 + \frac{s_Q}{k}},	
\end{equation}
which implies
\begin{equation}
\left(C_{N+1} - \frac{\lambda_j}{\lambda_j^2 + \frac{s_Q}{k}} \right)^2 > \frac{\lambda_j^2 + 4\gamma^2}{\left(\lambda_j^2 + \frac{s_Q}{k}\right)^2}.
\end{equation}
From this inequality, we conclude that if (\ref{eqn-sgd-theorem:proof-inhomo-3}) holds then (\ref{eqn-sgd-theorem:proof-inhomo-2}) is strictly larger than $1$. Lastly, for any $\tilde{j} \neq j$, 
\begin{equation}
\frac{s_Q}{\lambda_j \lambda_{\tilde{j}} + \gamma(\lambda_j + \lambda_{\tilde{j}})} > k
\end{equation}
is equivalent to
\begin{equation}
\frac{2}{\lambda_j + \lambda_{\tilde{j}}} > \frac{2(\lambda_j + \gamma)}{\lambda_j^2 + \frac{1}{k}s_Q}.
\end{equation}
With these facts, the conclusion for the lower bound follows just as in the proof of Theorem \ref{theorem-sgd:homogeneous}.
\end{proof}

Comparing Theorems \ref{theorem-sgd:homogeneous} and \ref{theorem-sgd:inhomogeneous}, we see that inhomogeneity exacts an additional price. To be specific, call the term multiplying $e_N$ in the bounds as the systematic component and the additive term as the variance component. First, the inhomogeneous case contains a variance component that was not present for the homogeneous case. Of course, this variance component is induced by the inhomogeneity: at each update, SGD-$k$ is minimizing an approximation to the quadratic problem that has a distinct solution from $\theta^*$ with nonzero probability, which requires decaying the step sizes to remove the variability of solutions induced by these approximations. Second, the inhomogeneity shrinks the threshold for step sizes that ensure a reduction in the systematic component, and inflates the threshold for step sizes that ensure an increase in the systematic component.

Continuing to compare the results for the homogeneous and inhomogeneous cases, we also observe that the systematic component diverges exponentially when the expected gradient is sufficiently small (less than $\delta$). Indeed, this is the precise region in which such a result is important: it says that for step sizes that are too large, any estimate near the solution set is highly unstable. In other words, if we initialize SGD-$k$ near the solution set, our iterates will diverge exponentially fast from this region for $C_N$ above the divergence threshold.

To summarize, our analysis of the stochastic quadratic problem defined in Problem \ref{problem-sgd:quadratic} is a generalization of the standard quadratic problem considered in classical numerical optimization. Moreover, our analysis of SGD-$k$ (see Definition \ref{definition-sgd:sgd-k}) on the stochastic quadratic problem generalizes the results and properties observed for gradient descent. In particular, our analysis (see Theorems \ref{theorem-sgd:homogeneous} and \ref{theorem-sgd:inhomogeneous}) derive a threshold for step sizes above which the iterates will diverge, and this threshold includes the threshold for classical gradient descent. Moreover, our analysis predicts an exponential divergence of the iterates from the minimizer.  

\subsection{The Deterministic Mechanism} \label{subsection:deterministic-mechanism}

We now use the insight developed from our analysis of the generic quadratic problem to formulate a deterministic mechanism for how SGD-$k$ ``escapes'' local minimizers. We will begin by stating the generic nonconvex problem, formalizing our deterministic mechanism, and discussing its implications for nonconvex optimization. We then briefly discuss the pitfalls of naive attempts to directly apply the theory developed in \S \ref{subsection:analysis} to arbitrary nonconvex problems.

\begin{problem}[Nonconvex Problem] \label{problem:nonconvex}
Let $(\Omega,\mathcal{F},\mathbb{P})$ be a probability space, and 
let $C^2(\mathbb{R}^p,\mathbb{R})$ be the set of twice continuously differentiable functions from $\mathbb{R}^p$ to 
Moreover, let $f$ be a random quantity taking values in $C^2(\mathbb{R}^p,\mathbb{R})$. Moreover, suppose that
\begin{enumerate}
\item $\E{f(\theta)}$ exists for all $\theta \in \mathbb{R}^p$, and is denoted by $F(\theta)$.
\item Suppose $F$ is bounded from below. 
\item Suppose that $F \in C^2(\mathbb{R}^p,\mathbb{R})$, and $\nabla F(\theta) = \E{ \nabla f(\theta) }$ and $\nabla^2 F(\theta) = \E{ \nabla^2 f(\theta)}$. 
\end{enumerate}
Let $\lbrace f_N: N \in \mathbb{N} \rbrace$ be independent copies of $f$. The nonconvex problem is to use $\lbrace f_N : N \in \mathbb{N} \rbrace$ to determine a $\theta^*$ that is a minimizer of $F(\theta)$. 
\end{problem}

Note, while Problem \ref{problem:nonconvex} focuses on twice differentiable functions, we often only have continuous functions in machine learning applications. However, for most machine learning applications, the continuous functions are only nondifferentiable at finitely many points, which can be approximated using twice continuously differentiable functions.

Before stating the deterministic mechanism, we will also need to extend our definition of SGD-$k$ to the generic nonconvex problem. Of course, this only requires replacing (\ref{eqn-definition:sgd-k}) with 
\begin{equation}
\theta_{N+1} = \theta_N - \frac{C_{N+1}}{k}\sum_{j=Nk+1}^{N(k+1)} \nabla f_j(\theta_N).
\end{equation}
We now state our deterministic mechanism. In this statement, $B(\theta,\epsilon)$ denotes a ball around a point $\theta \in \mathbb{R}^p$ of radius $\epsilon > 0$.

\begin{definition}[Deterministic Mechanism] \label{definition:deterministic mechanism}
Let $\lbrace \theta_N : N \in \mathbb{N} \rbrace$ be the iterates generated by SGD-$k$ with deterministic step sizes $\lbrace C_N : N \in \mathbb{N} \rbrace$ for the nonconvex problem defined in Problem \ref{problem:nonconvex}. Suppose $\theta^*$ is a local minimizer of Problem \ref{problem:nonconvex}.

For an $\epsilon > 0$ sufficiently small, let $\lambda_1 \geq \cdots \geq \lambda_m > 0$ denote the nonzero eigenvalues of
\begin{equation} 
\frac{1}{|B(\theta^*,\epsilon)|} \int_{B(\theta^*,\epsilon)}\nabla^2 F(\theta) d\theta,
\end{equation}
and define $s_f$ to be the maximum between $0$ and 
\begin{equation}
\frac{1}{|B(\theta^*,\epsilon)|}\int_{B(\theta^*,\epsilon)}\inf_{v'\nabla^2 F(\theta) v \neq 0
} \left\lbrace \frac{v' \left(\E{ \nabla^2 f(\theta) \nabla^2 F(\theta) \nabla^2 f(\theta)} - (\nabla^2 F(\theta) )^3\right) v}{v' \nabla^2 F(\theta) v}  \right\rbrace d\theta.
\end{equation}
There is a $\delta > 0$ such that if $\theta_N \in B(\theta^*,\delta) \setminus \lbrace \theta^* \rbrace$ and $C_{N+1} < 0$ or
\begin{equation}
C_{N+1} > \frac{2\lambda_j}{\lambda_j^2 + \frac{1}{k}s_f},
\end{equation} 
where
\begin{equation}
j = \begin{cases}
1 & k \leq \frac{s_f}{ \lambda_1 \lambda_2 } \\
l ~(l \in \lbrace 2,\ldots,m-1 \rbrace) & \frac{s_f}{\lambda_l\lambda_{l-1}} < k \leq \frac{s_f}{\lambda_l \lambda_{l+1}} \\
m & k > \frac{s_f}{\lambda_m \lambda_{m-1}},
\end{cases}
\end{equation}
then for some $\rho_N > 1$, $\cond{F(\theta_{N+1}) - F(\theta^*)}{\theta_N} > \rho_N ( F(\theta_N) - F(\theta^*) )$ with probability one.
\end{definition}

For nonconvex problems, our deterministic mechanism makes several predictions. First, suppose a minimizer's local geometry has several large eigenvalues. Then, for $k$ sufficiently small, SGD-$k$'s divergence from a local minimizer will be determined by these large eigenvalues. This prediction is indeed observed in practice: \cite{keskar2016} noted that methods that were ``more stochastic'' (i.e., small $k$) diverged from regions that had only a fraction of relatively large eigenvalues. 

Second, suppose for a given nonconvex function, we have finitely many local minimizers and, for each $k$, we are able to compute the threshold for each local minimizer (for some sufficiently small $\epsilon > 0$, which may differ for each minimizer). Then, for identical choices in step sizes, $\lbrace C_{N+1} : N \in \mathbb{N} \rbrace$, the number of minimizers for which SGD-$k$ can converge to is a nondecreasing function of $k$. Moreover, SGD-$k$ and Gradient Descent will only converge to minimizers that can support the step sizes $\lbrace C_{N+1} : N \in \mathbb{N} \rbrace$, which implies that SGD-$k$ will prefer flatter minimizers in comparison to gradient descent. Again, this prediction is indeed observed in practice: the main conclusion of \cite{keskar2016} is that SGD-$k$ with smaller $k$ converges to flatter minimizers in comparison to SGD-$k$ with large $k$. 

Third, our deterministic mechanism states that whenever the iterates are within some $\delta$ ball of $\theta^*$ with step sizes that are too large, we will observe exponential divergence. Demonstrating that exponential divergence does indeed occur in practice is the subject of \S \ref{section:homogeneous} and \S \ref{section:inhomogeneous}.

However, before discussing these experiments, it is worth noting the challenges of generalizing the quadratic analysis to the nonconvex case. In terms of convergence, recall that a key ingredient in the local quadratic analysis for classical numerical optimization is stability: once iterates enter a basin of the minimizer, then the iterates will remain within this particular basin of the minimizer. However, for stochastic problems, this will not hold with probability one, especially when the gradient has a noise component with an unbounded support (e.g., linear regression with normally distributed errors). 

Fortunately, attempts to generalize the divergence results will not require this stability property because divergence implies that the iterates will eventually escape from any such basin. The challenge for divergence, however, follows from the fact that the lower bounds are highly delicate, and any quadratic approximation will require somewhat unrealistic conditions on the remainder term in Taylor's theorem for the nonconvex problem. Thus, generalizing the quadratic case to the nonconvex case is something that we are actively pursuing. 

\section{Numerical Study of a Homogeneous Nonconvex Problem} \label{section:homogeneous}
Here we will study the predictions of our deterministic mechanism on a homogeneous nonconvex problem. The nonconvex objective function that we will use is a summation of $N$ functions described in Appendix \ref{section:qc} with different parameters for the quadratic and circular components. Importantly, this nonconvex problem has only two minimizers --- one corresponding to the quadratic component and the other to the circular component---that can be determined analytically, and both of these minimizers are homogeneous. This problem is described presently.

\begin{problem}[Quadratic-Circle Sums] \label{problem-sgd:qcs}
Let $g_1,\ldots,g_N$ be functions as specified (\ref{eqn-sgd:quad-basin}) and $h_1,\ldots,h_N$ be functions as specified by (\ref{eqn-sgd:circ-basin}). The quadratic-circle sums objective function is
\begin{equation} \label{eqn-sgd:qcs-obj}
\sum_{i=1}^N p_i\min \lbrace  g_i(x), h_i(x) \rbrace,
\end{equation}
where $p_i$ are positive valued and sum to one. Let $Y$ be a random variable taking values $\lbrace (g_i,h_i): i =1,\ldots,N \rbrace$ with the probability of sampling $(g_i,h_i)$ equals to $p_i$. Let $Y_1,Y_2,\ldots$ be independent copies of $Y$. The quadratic-circle sums problem is to use $\lbrace Y_1,Y_2,\ldots \rbrace$ to minimize (\ref{eqn-sgd:qcs-obj}) restricted to $(-10,10) \times (-20, 15) \subset \mathbb{R}^{2}$. 
\end{problem}

Again, we will use this problem to either verify or negate our deterministic mechanism. We will focus on the following three questions.
\begin{enumerate}
\item Is our predicted threshold for divergence valid?
\item If we analogously compute a threshold for convergence, will it be valid?
\item In the case of divergence, do we observe exponential divergence?
\end{enumerate}

We will design four model quadratic-circular sums problems on which to analyze these questions. All of the model functions have two minimizers in the region of interest: one for the circle-basin component at $(0,0)$ and one for quadratic basin component at $(0,-15)$. However, the sharpness of the basins about each minimizers distinguishes the four model quadratic-circle sums problems. The first model, Model 1, is characterized by having at least one large eigenvalue about each minimizer. The second model, Model 2, is characterized by having at least one large eigenvalue for the circular basin minimizer and only small eigenvalues for the quadratic basin minimizer. Model 3 is characterized by having at least one large eigenvalue for the quadratic basin minimizer and only small eigenvalues for the circular basin minimizer. Model 4 is characterized by having only small eigenvalues for both minimizers. Note, the size of these eigenvalues are relative to the other models and not some general baseline. For clarity, each of the four model expected quadratic-circle sums objective functions are visualized in Figure \ref{figure-sgd:qc-models}.

\begin{figure}[hbt]
\centering
\subfloat[Model 1]{\includegraphics[width=0.45\textwidth]{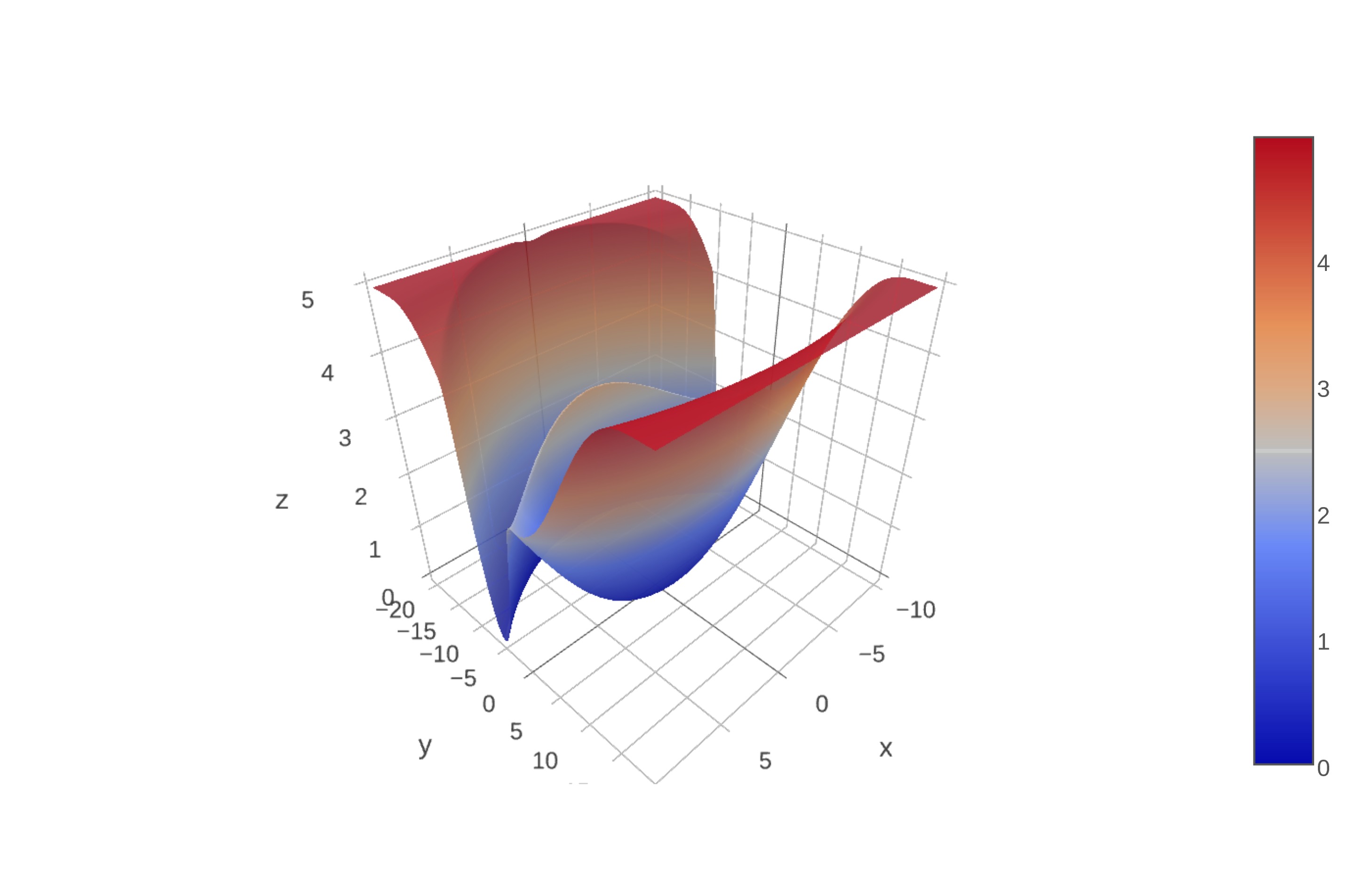}}
\subfloat[Model 2]{\includegraphics[width=0.45\textwidth]{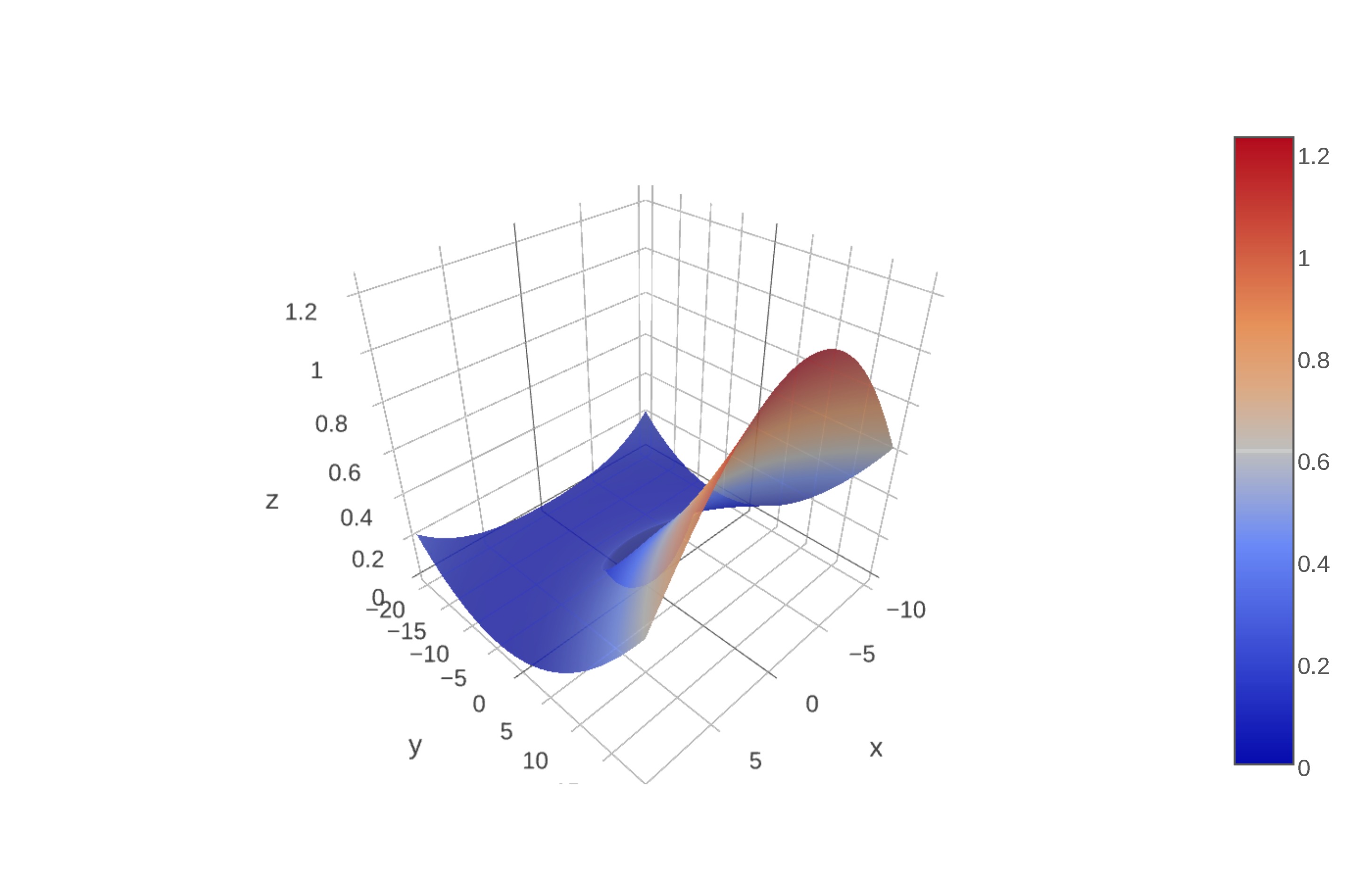}} \\
\subfloat[Model 3]{\includegraphics[width=0.45\textwidth]{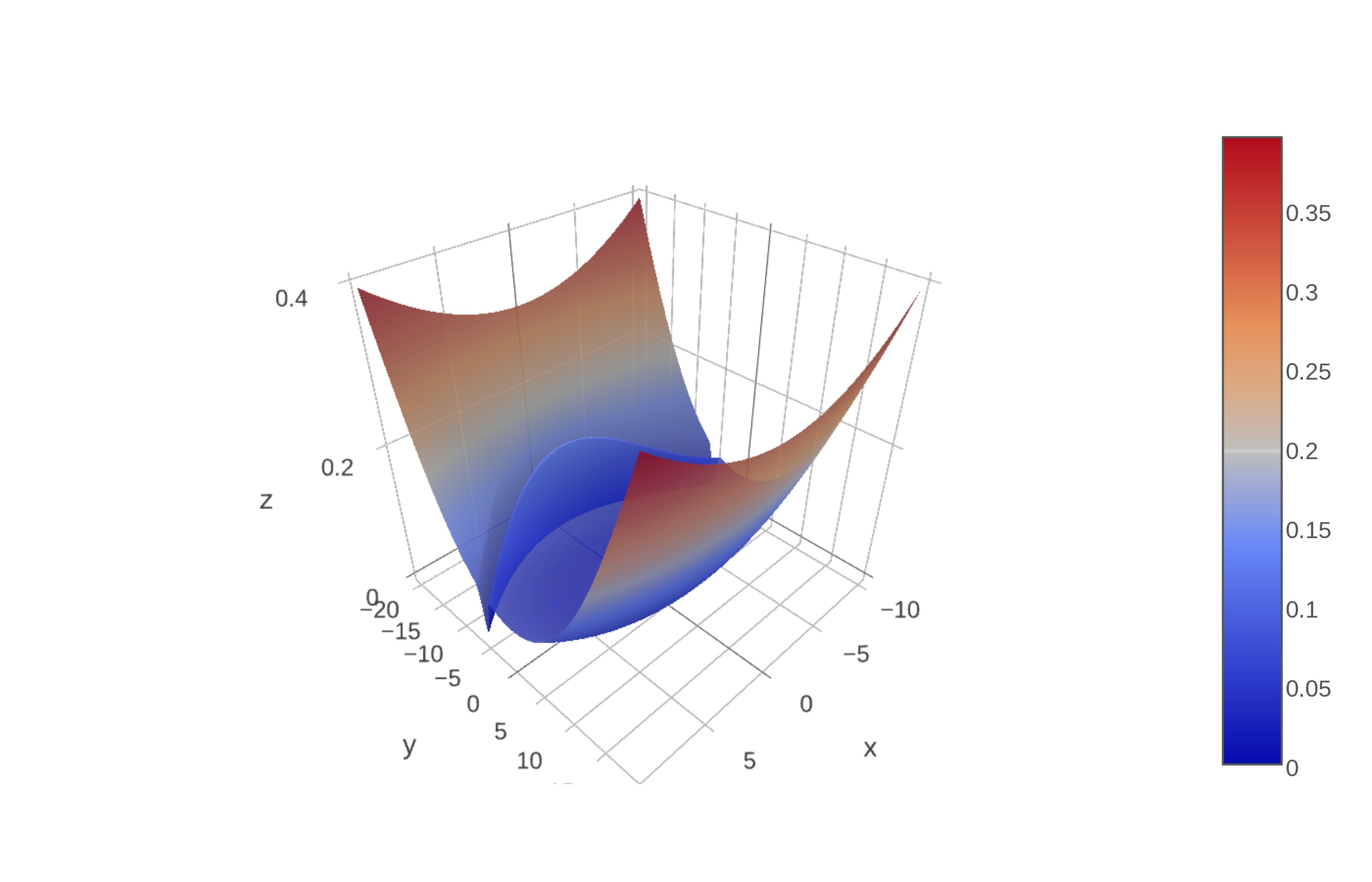}}
\subfloat[Model 4]{\includegraphics[width=0.45\textwidth]{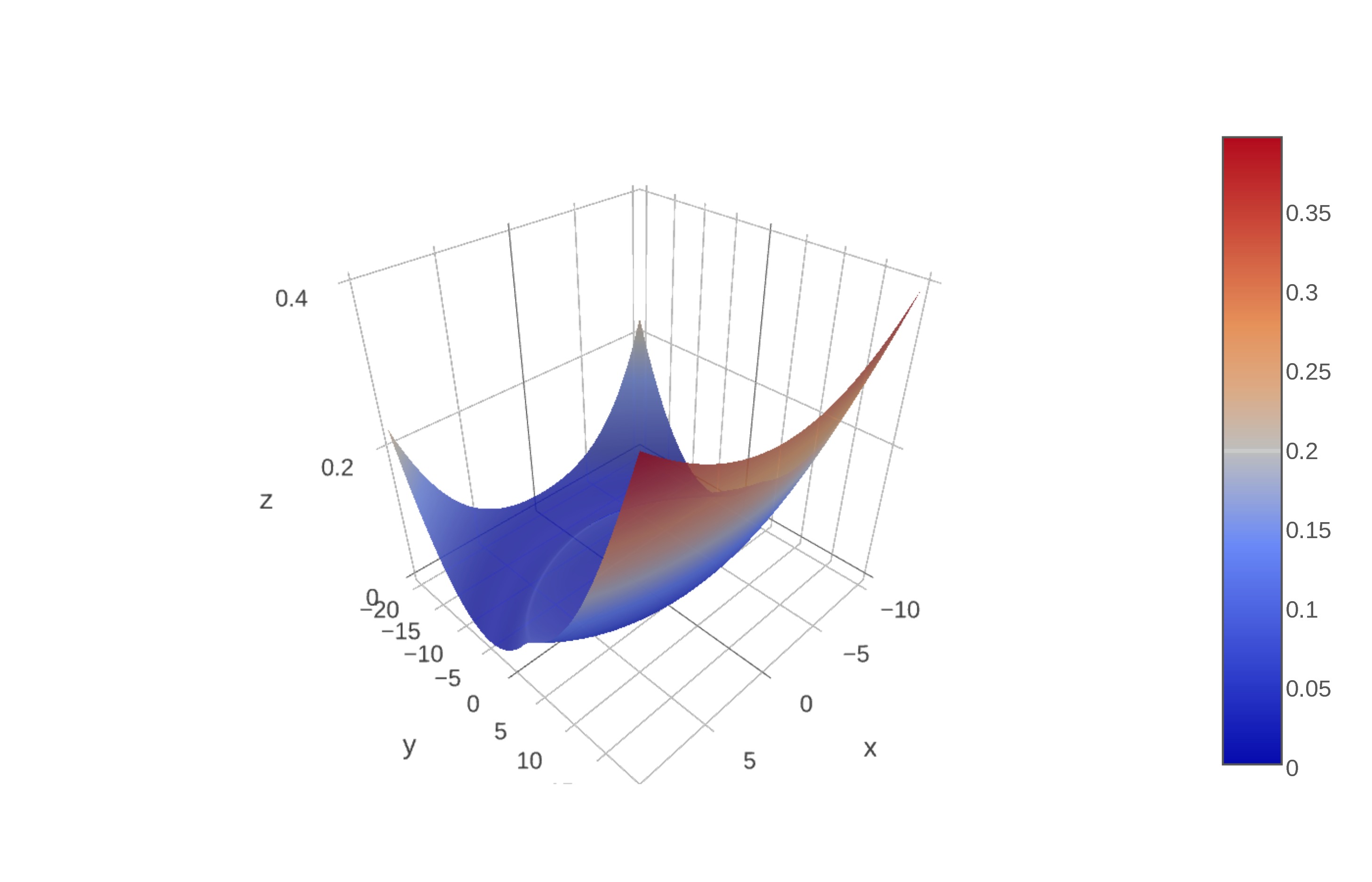}}
\caption[Model Quadratic-Circle Sums Problems]{The expected objective function for the four model quadratic-circle sums objective functions.}
\label{figure-sgd:qc-models}
\end{figure}

To compute the divergence and convergence thresholds for these models, we can precisely evaluate the expected values because the random function that we are considering is a discrete random variable. Moreover, in computing these thresholds, we will use an $\epsilon \approx 2 \times 10^{-2}$ and we compute the integrals using a Monte Carlo sample of size $1000$. The computed thresholds are tabulated in Tables \ref{table:lb} and \ref{table:ub}. Note, for the divergence threshold, $k_{\max}$ is the largest integer smaller than $\frac{s_f}{\lambda_{m-1} \lambda_m}$, and, for the upper bounds, $k_{\max}$ is the the integer closest to $\frac{t_f}{\lambda_m \lambda_1}.$

\begin{table}[ht]
\centering
\tbl{Estimated lower bound values for divergence for the two minimizers of the Quadratic-Circular Sums objective function for the four model problems.}
{
\begin{tabular}{llccccrcc} \toprule
  &  &\multicolumn{4}{c}{Circ. Minimum} &  &\multicolumn{2}{c}{Quad. Minimum} \\
    \cmidrule{3-6} \cmidrule{8-9}
  &  & $k=1$ & $ k = 0.99k_{\max} $ & $ k = 2.0 k_{\max} $ & $ k = \infty $ &  & $ k =  1 $ & $ k = \infty $ \\
    \midrule
Model 1 & & $ 3.560e+11 $ & $ 3.583e+11 $ & $ 3.583e+11 $ & $ 3.583e+11 $ & & $ 4.977e+00 $ & $ 4.981e+00 $ \\
Model 2 & & $ 3.339e+11 $ & $ 3.360e+11 $ & $ 3.360e+11 $ & $ 3.360e+11 $ & & $ 9.954e+02 $ & $ 9.961e+02 $ \\
Model 3 & & $ 1.266e+15 $ & $ 1.273e+15 $ & $ 1.273e+15 $ & $ 1.273e+15 $ & & $ 4.977e+00 $ & $ 4.981e+00 $ \\
Model 4 & & $ 1.269e+15 $ & $ 1.277e+15 $ & $ 1.277e+15 $ & $ 1.277e+15 $ & & $ 9.954e+02 $ & $ 9.961e+02 $ \\
\bottomrule
\end{tabular}
}
\label{table:lb}
\end{table}

\begin{table}[ht]
\centering
\tbl{Estimated upper bound values for convergence for the two minimizers of the Quadratic-Circular Sums objective function for the four model problems.}
{
\begin{tabular}{llccccrcc} \toprule
  &  &\multicolumn{4}{c}{Circ. Minimum} &  &\multicolumn{2}{c}{Quad. Minimum} \\
    \cmidrule{3-6} \cmidrule{8-9}
  &  & $k=1$ & $ k = 0.99k_{\max} $ & $ k = 2.0 k_{\max} $ & $ k = \infty $ &  & $ k =  1 $ & $ k = \infty $ \\
    \midrule
Model 1 & & $ 8.318e-05 $ & $ 1.541e+11 $ & $ 2.013e+11 $ & $ 2.874e+11 $ & & $ 4.977e+00 $ & $ 4.981e+00 $ \\
Model 2 & & $ 8.456e-05 $ & $ 1.624e+11 $ & $ 2.157e+11 $ & $ 3.180e+11 $ & & $ 9.954e+02 $ & $ 9.961e+02 $ \\
Model 3 & & $ 5.655e-06 $ & $ 6.222e+14 $ & $ 8.282e+14 $ & $ 1.226e+15 $ & & $ 4.977e+00 $ & $ 4.981e+00 $ \\
Model 4 & & $ 5.511e-06 $ & $ 6.267e+14 $ & $ 8.264e+14 $ & $ 1.201e+15 $ & & $ 9.954e+02 $ & $ 9.961e+02 $ \\
\bottomrule
\end{tabular}
}
\label{table:ub}
\end{table}

\subsection{Experimental Procedure}  
We study four experimental factors: the model, the learning method, the initialization point, and the learning rate. 
The model factor has four levels as given by the four model objective functions shown in Figure \ref{figure-sgd:qc-models}. The learning method has two levels which are SGD-$1$ or GD. 
The initialization point also has two levels: either the initialization point is selected randomly with a uniform probability from a disc of radius $10^{-8}$ centered about the circular basin minimizer, or the initialization point is selected randomly with a uniform probability from a disc of radius $10^{-8}$ centered about the quadratic basin minimizer. The learning rate has levels that are conditional on the initialization point. 
If the initialization point is near the minimizer of the circular basin, then the learning rate takes on values $10^{10}$, $5(10^{10})$, $10^{11}$, $5(10^{11})$, $10^{12}$, and $5(10^{12})$. 
If the initialization point is near the minimizer of the circular basin, then the learning rate takes on values $1, 4, 16, 64, 256, 1024$. 

For each unique collection of the levels of the four factors, one hundred independent runs are executed with at most twenty iterations. For each run, the euclidean distance between the iterates and the circular basin minimizer and the euclidean distance between the iterates and the quadratic basin minimizer are recorded. 

\subsection{Results and Discussion}
Note that the results of SGD-$k$ on Models 1 and 2 and Models 3 and 4 are similar when initialized around the circular basin minimizer, and the results of SGD-$k$ on Models 1 and 3 and Models 2 and 4 are similar when initialized around the quadratic basin minimizer. Hence, when we discuss the circular basin minimizer, we will compare Models 1 and 3, but we could have just as easily replaced Model 1 with Model 2 or Model 3 with Model 4 and the discussion would be identical. Similarly, when we discuss the quadratic basin minimizer, we will compare Models 1 and 2, but we could have replaced the results of Model 1 with Model 3 or Model 2 with Model 4 and the discussion would be identical.

\begin{figure}[hbt]
\centering
\includegraphics[width=\textwidth]{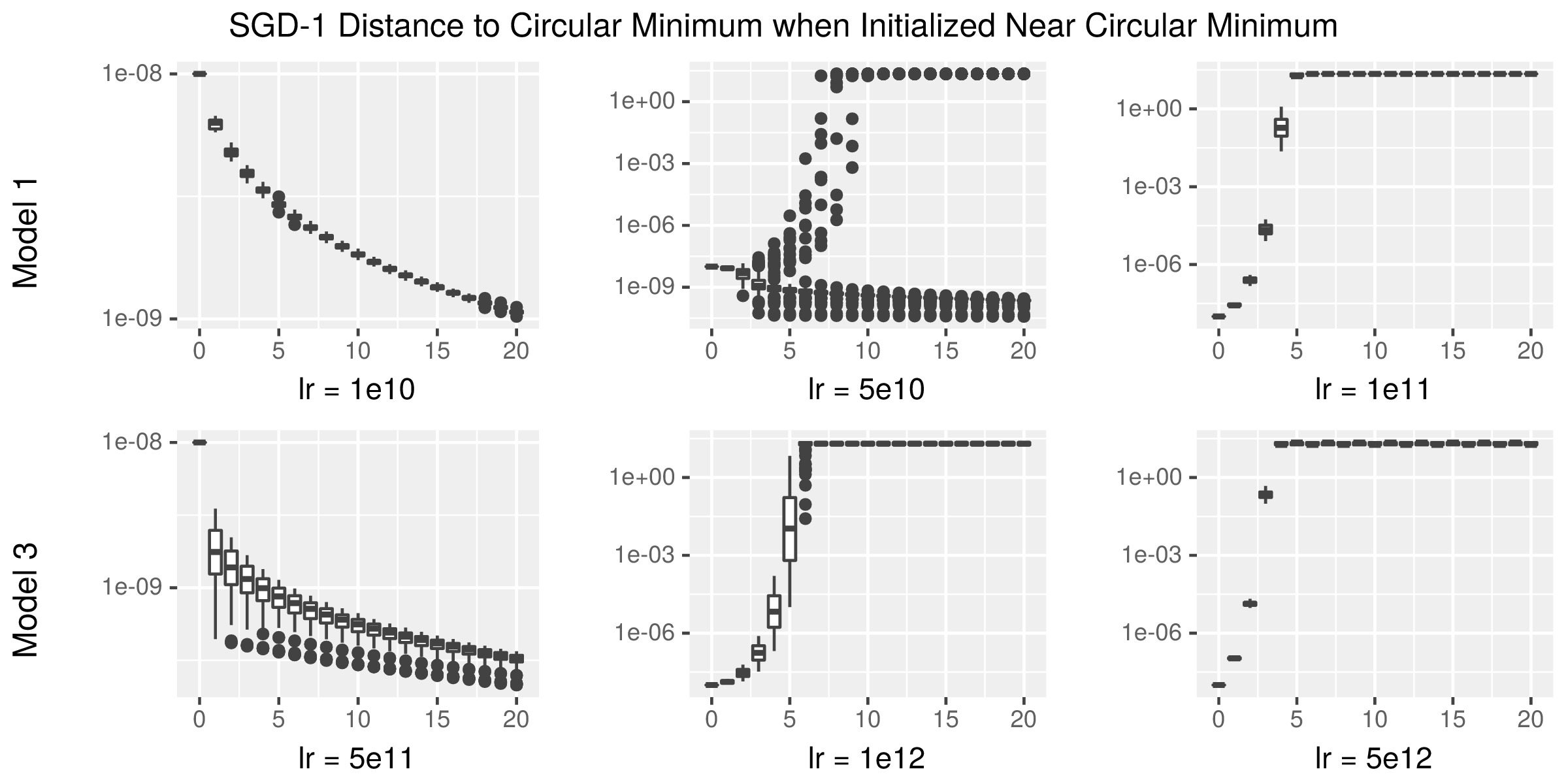}
\caption[SGD-$1$ Near Circle Basin Minimizer]{The behavior of SGD-$1$ on Models 1 and 3 when initialized near the circular minimum. The $y$-axis shows the distance (in logarithmic scale) between the estimates and the circular minimum for all runs of the specified model and the specified learning rate.}
\label{figure-sgd:cq-1}
\end{figure}

\begin{figure}[hbt]
\centering
\includegraphics[width=\textwidth]{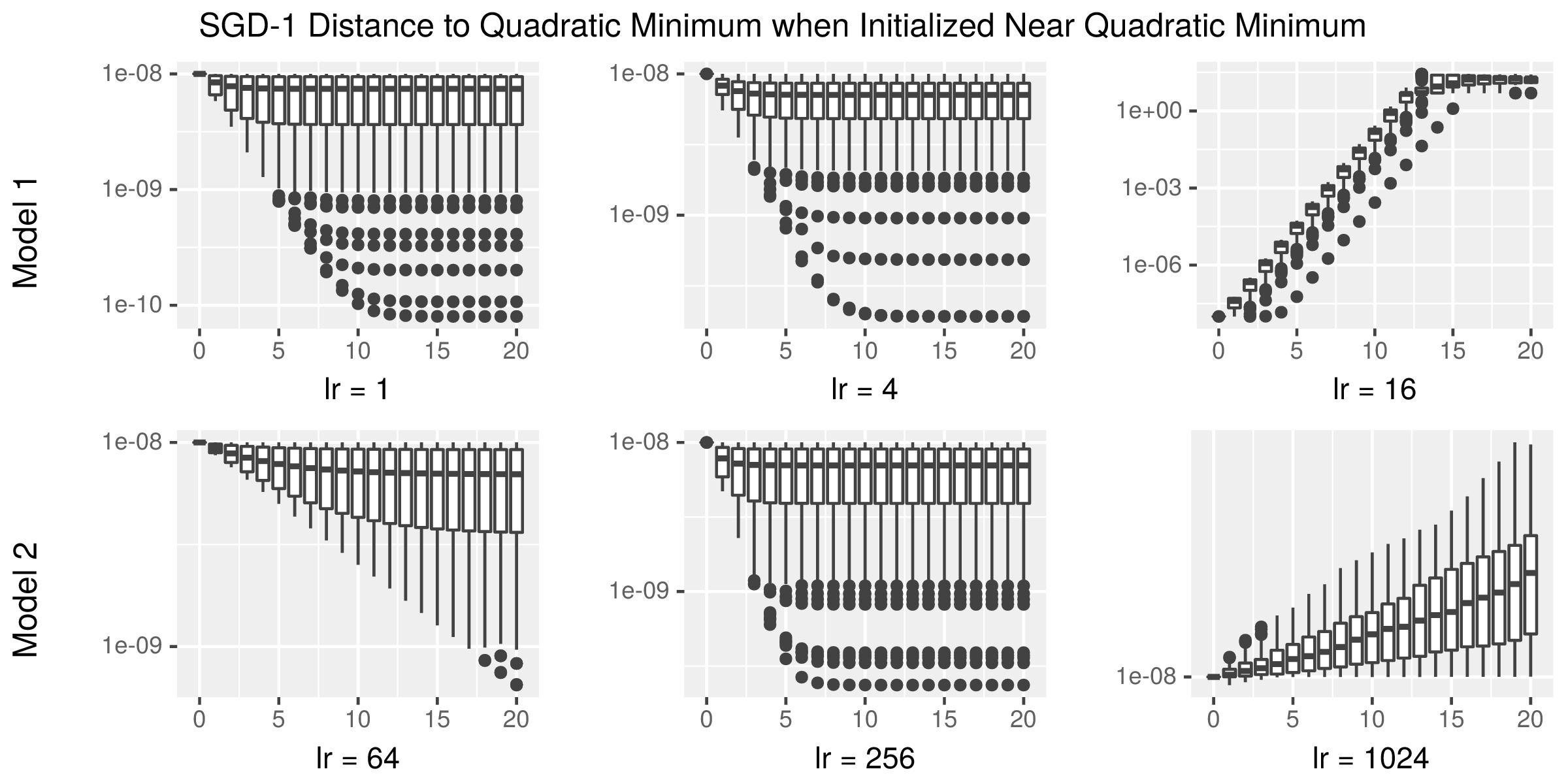}
\caption[SGD-$1$ Near Quadratic Basin Minimizer]{The behavior of SGD-$1$ on Models 1 and 2 when initialized near the quadratic minimum. The $y$-axis shows the distance (in logarithmic scale) between the iterates and the quadratic minimum for all runs of the specified model and the specified learning rate.}
\label{figure-sgd:cq-2}
\end{figure}

Figure \ref{figure-sgd:cq-1} shows the distance between the iterates and the circular basin minimizer for the one hundred independent runs of SGD-$1$ for the specified model and the specified learning rate when initialized near the circular basin minimizer. The learning rates that are displayed are the ones where a transition in the convergence-divergence behavior of the method occur for the specific model. Specifically, SGD-$1$ begins to diverge for learning rates between $5 \times 10^{10}$ and $10^{11}$ for Model 1 and between $5\times 10^{11}$ and  $10^{12}$ for Model 3. 
Similarly, Figure \ref{figure-sgd:cq-2} shows the distance between the iterates and the quadratic basin minimizer for the one hundred independent runs of SGD-$1$ for the specified model and the specified learning rate when initialized near the quadratic basin minimizer. SGD-$1$ begins to diverge for learning rates between $4$ and $16$ for Model 1 and between $256$ and $1024$ for Model 2. 

From these observations, we see that relatively flatter minimizers enjoy larger thresholds for divergence of SGD-$1$ in comparison to sharper minimizers. Moreover, while the bounds computed in Table \ref{table:lb} are conservative (as we would expect), they are still rather informative, especially in the case of the quadratic basin. Thus, regarding questions (1) and (2) above, we see that while our thresholds are slightly conservative, we still correctly predict the expected behavior of the SGD-$k$ iterates. Moreover, regarding question (3), we observe that the iterates diverge from their respective minimizers in a deterministic way with an exponential rate. Again, this observations lends credence to our deterministic mechanism over the widely accepted stochastic mechanism.

\begin{figure}[hbt]
\centering
\includegraphics[width=\textwidth]{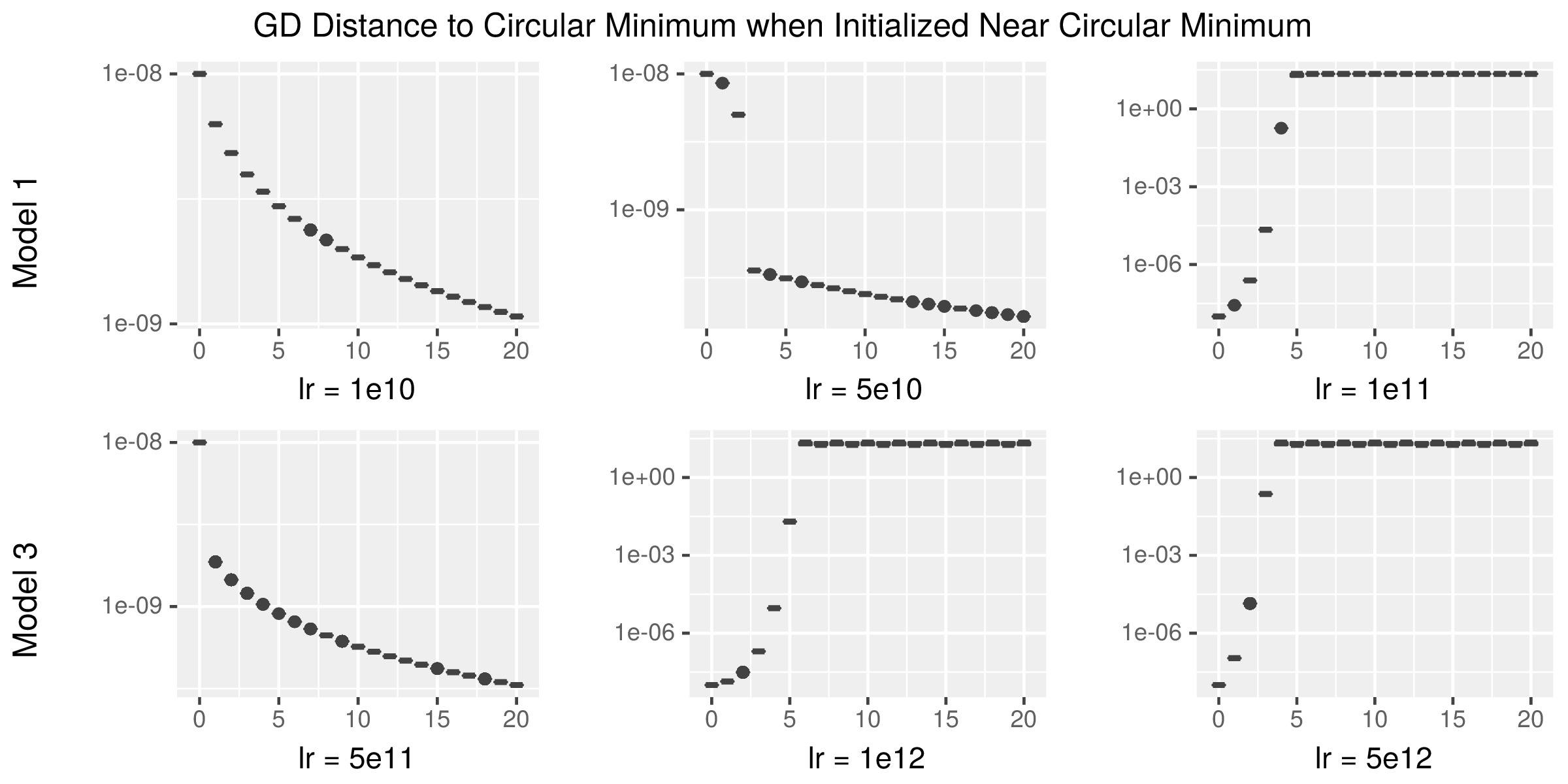}
\caption[GD Near Circle Basin Minimizer]{The behavior of GD on Models 1 and 3 when initialized near the circular basin minimizer. The $y$-axis shows the distance (in logarithmic scale) between the iterates and the circular minimizer for all runs of the specified model and the specified learning rate.}
\label{figure-sgd:cq-3}
\end{figure}

Figure \ref{figure-sgd:cq-3} shows the distance between the iterates and the circular basin minimizer for the one hundred independent runs of gradient descent (GD) for the specified model and the specified learning rate when initialized near the circular basin minimizer. If we compare Figures \ref{figure-sgd:cq-1} and \ref{figure-sgd:cq-3}, we notice that, for Model 1 and learning rate $5 \times 10^{10}$, the runs of GD converge whereas some of the runs for SGD-$1$ diverge. Although we do not report the results for all of the models or all of the learning rates, we note the boundary for divergence-convergence for GD are smaller than those of SGD-$1$. In light of our deterministic mechanism, this behavior is expected: as the batch-size, $k$, increases, the lower bound on the learning rates for divergence increases. Therefore, we should expect that at those boundary learning rates where some SGD-$1$ runs are stable and others diverge, for GD, we should only see stable runs, and, indeed, this is what the comparison of Figures \ref{figure-sgd:cq-1} and \ref{figure-sgd:cq-3} shows.

\begin{figure}[hbt]
\centering
\includegraphics[width=\textwidth]{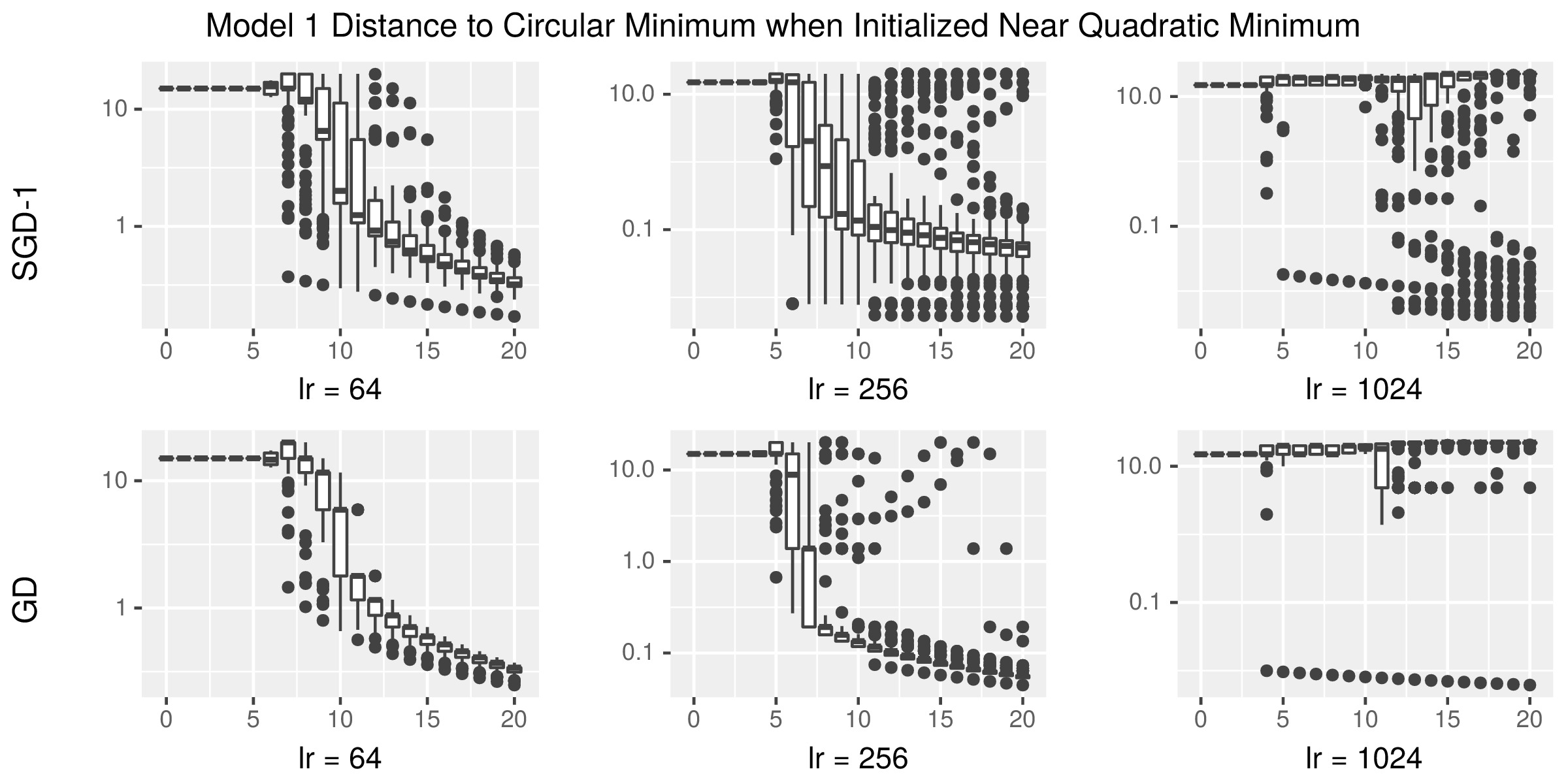}
\caption[Comparing SGD-$1$ and GD Near Quadratic Basin Minimizer]{The behavior of SGD-$1$ and GD on Model 1 when initialized near the quadratic minimum for select learning rates. The $y$-axis shows the distance (in logarithmic scale) between the iterates and the circular minimizer for all runs of the specified method and the specified learning rate.}
\label{figure-sgd:cq-4}
\end{figure}

Figure \ref{figure-sgd:cq-4} shows the distance between the iterates and the circular basin minimizer for the one hundred independent runs of SGD-$1$ and the one hundred independent runs of GD for the specified method and the specified learning rate when initialized near the quadratic basin minimizer. In Figure \ref{figure-sgd:cq-4}, we see that for the learning rates that lead to divergence from the quadratic basin minimizer (compare to the top three subplots of Figure \ref{figure-sgd:cq-1}) are in some cases able to converge to the circular minimizer. Again, in light of our deterministic mechanism, this is expected: the learning rates shown in Figure \ref{figure-sgd:cq-4} guarantee divergence from the quadratic minimizer and are sufficiently small that they can lead to convergence to the circular minimizer. However, we notice in Figure \ref{figure-sgd:cq-4} that as the learning rate increases, even though the learning rate is below the divergence bound for the circular minimizer, the iterates for SGD-$1$ and GD are diverging from both the circular and quadratic minima and are converging to the corners of the feasible region.

To summarize, we see that convergence-divergence behavior of SGD-$k$ for the quadratic-circle sums nonconvex problem is captured by our deterministic mechanism. Although our estimated bounds are sometimes conservative, they generally divide the convergence and divergence regions of SGD-$k$. Moreover, as our deterministic mechanism predicts, we observed exponential divergence away from minimizers when the learning rate is above the divergence threshold. Again, this lends credence to our mechanism for divergence and provides evidence against the stochastic mechanism for SGD-$k$ iterates to ``escape'' sharp minimizers.

\section{Numerical Study of an Inhomogeneous Nonconvex Problem} \label{section:inhomogeneous}
In order to further elaborate our results, we now consider a less trivial nonconvex optimization problem which has multiple \textit{inhomogeneous} minimizers, a more complex surface, and is much higher dimensional. Specifically, we explore the modification of Styblinksi-Tang (ST) function \citep{styblinski1990}, which was originally introduced in the neural network literature and served as a test function owing to its complex surface yet simple mathematical form. The $p$-dimensional ST function is given by
\begin{equation} \label{eqn-sgd:st}
f(x) = \frac{1}{2}\sum_{i=1}^p c_{i,1} x_i^4 + c_{i,2} x_i^2 + c_{i,3}x_i,
\end{equation}
where $c_{i,1},c_{i,3}$ are non-negative scalars, and $c_{i,2}$ is a nonpositive scalar. Using this equation we can define the following stochastic nonconvex problem.

\begin{problem}[Styblinski-Tang Sums] \label{problem-sgd:st-sums}
Let $f_1,\ldots,f_N$ be functions as specified by (\ref{eqn-sgd:st}) of some dimension $p$. The Styblinksi-Tang sums objective function is
\begin{equation} \label{eqn-sgd-problem:sts-obj}
\sum_{i=1}^N p_i f_i(x),
\end{equation}
where $p_i$ are positive valued and sum to one. Let $Y$ be a random variable taking values $\lbrace f_i: i =1,\ldots,N \rbrace$ with the probability of sampling $f_i$ equals to $p_i$. Let $Y_1,Y_2,\ldots$ be independent copies of $Y$. The Styblinski-Tang Sums problem is to use $\lbrace Y_1,Y_2,\ldots \rbrace$ to minimize (\ref{eqn-sgd-problem:sts-obj}) restricted to $(-5,5)^p \subset \mathbb{R}^p$.
\end{problem}

Just as above, we will use this problem to either verify or negate our deterministic mechanism. We will focus on the following three questions.
\begin{enumerate}
\item Is our predicted threshold for divergence valid?
\item If we analogously compute a threshold for stability, will it be valid?
\item In the case of divergence, do we observe exponential divergence?
\end{enumerate}
\begin{remark}
Our approach to studying SGD-$k$ on Problem \ref{problem-sgd:st-sums} will be similar to our study of SGD-$k$ on Problem \ref{problem-sgd:qcs} in the sense that we will use constant step sizes. Therefore, the use of constant step sizes will prevent SGD-$k$ from converging to the minimizer, but we should expect stability around the minimizer. Importantly, by using constant step sizes, we remove the added variable of how to schedule the learning rate appropriately.
\end{remark}

We consider three randomly generated realizations of the problem, which we label Model 1, Model 2, and Model 3. Model 1 is of dimension ten, has $2^{10}$ minimizers, and has $N=200$ components; Model 2 is of dimension fifty, has $2^{50}$ minimizes, and has $N=1000$ components; Model 3 is of dimension one hundred, has $2^{100}$ minimizers, and has $N = 2000$ components. As we cannot study all of these minimizers, we will study the sharpest minimizer and the flattest minimizer of each model. For these minimizers, the divergence and convergence thresholds are computed in a manner analogous to the computations done in \S \ref{section:homogeneous}, and are reported in Tables \ref{table-sgd:st-lb} and \ref{table-sgd:st-ub}.

\begin{table}[ht]
\centering
\tbl{Estimated lower bound values for divergence for the sharpest and flattest minima of different ST Sums problems.}
{
\begin{tabular}{lccccccccc} \toprule
 &  &\multicolumn{2}{c}{Model 1} & &\multicolumn{2}{c}{Model 2} & &\multicolumn{2}{c}{Model 3} \\ 
 \cmidrule{3-4}  \cmidrule{6-7}  \cmidrule{9-10} 
 k &  & Flat & Sharp &  & Flat & Sharp &  & Flat & Sharp \\ 
 \midrule 
$ 1.0 $ &  & $ 9.001e-03 $ & $ 7.682e-03 $ & & $ 2.134e-02 $ & $ 1.951e-02 $ & & $ 2.598e-02 $ & $ 1.871e-02 $ \\ 
 $ 100.0 $ &  & $ 9.814e-02 $ & $ 8.133e-02 $ & & $ 6.831e-01 $ & $ 6.117e-01 $ & & $ 1.221e+00 $ & $ 8.561e-01 $ \\ 
 $ 200.0 $ &  & $ 1.033e-01 $ & $ 8.546e-02 $ & & $ 8.100e-01 $ & $ 7.225e-01 $ & & $ 1.590e+00 $ & $ 1.106e+00 $ \\ 
 $ 350.0 $ &  & $ 1.057e-01 $ & $ 8.737e-02 $ & & $ 8.800e-01 $ & $ 7.833e-01 $ & & $ 1.827e+00 $ & $ 1.264e+00 $ \\ 
 $ 500.0 $ &  & $ 1.067e-01 $ & $ 8.815e-02 $ & & $ 9.116e-01 $ & $ 8.106e-01 $ & & $ 1.942e+00 $ & $ 1.341e+00 $ \\ 
 $ \infty $ &  & $ 1.091e-01 $ & $ 9.004e-02 $ & & $ 9.947e-01 $ & $ 8.823e-01 $ & & $ 2.279e+00 $ & $ 1.563e+00 $ \\ 
 \bottomrule
\end{tabular}
}
\label{table-sgd:st-lb}
\end{table}

\begin{table}[ht]
\centering
\tbl{Estimated upper bound values for convergence for the sharpest and flattest minima of different ST sums problems.}
{
\begin{tabular}{lccccccccc} \toprule
 &  &\multicolumn{2}{c}{Model 1} & &\multicolumn{2}{c}{Model 2} & &\multicolumn{2}{c}{Model 3} \\ 
 \cmidrule{3-4}  \cmidrule{6-7}  \cmidrule{9-10} 
 k &  & Flat & Sharp &  & Flat & Sharp &  & Flat & Sharp \\ 
 \midrule 
$ 1.0 $ &  & $ 2.319e-03 $ & $ 2.250e-03 $ & & $ 8.603e-04 $ & $ 8.334e-04 $ & & $ 7.587e-04 $ & $ 9.089e-04 $ \\ 
 $ 100.0 $ &  & $ 4.268e-02 $ & $ 3.781e-02 $ & & $ 7.925e-02 $ & $ 7.621e-02 $ & & $ 7.345e-02 $ & $ 8.594e-02 $ \\ 
 $ 200.0 $ &  & $ 4.632e-02 $ & $ 4.087e-02 $ & & $ 1.174e-01 $ & $ 1.049e-01 $ & & $ 1.423e-01 $ & $ 1.629e-01 $ \\ 
 $ 350.0 $ &  & $ 4.808e-02 $ & $ 4.234e-02 $ & & $ 1.329e-01 $ & $ 1.180e-01 $ & & $ 2.379e-01 $ & $ 2.303e-01 $ \\ 
 $ 500.0 $ &  & $ 4.882e-02 $ & $ 4.296e-02 $ & & $ 1.403e-01 $ & $ 1.243e-01 $ & & $ 2.812e-01 $ & $ 2.539e-01 $ \\ 
 $ \infty $ &  & $ 5.064e-02 $ & $ 4.447e-02 $ & & $ 1.613e-01 $ & $ 1.418e-01 $ & & $ 3.742e-01 $ & $ 3.337e-01 $ \\ 
 \bottomrule
\end{tabular}
}
\label{table-sgd:st-ub}
\end{table}

\subsection{Experimental Procedure}
We study four experimental factors: the model, the batch size, the initialization point, and the learning rate. The model factor has three levels that correspond to the three models described above. The batch size will take the values corresponding to SGD-$1$, SGD-$200$, SGD-$500$, and Gradient Descent. The initialization point will be randomly selected from a uniform distribution on a ball whose radius will take values $\lbrace 10^{-3},10^{-2},10^{-1}, 1 \rbrace$ and is centered at either the sharpest minimizer or the flattest minimizer. The learning rates will be linear combinations of the upper and lower bounds. Specifically, if $u$ is the threshold for convergence and $l$ is the threshold for divergence, then the learning rates will be $1.5l$, $0.5(u+l)$, or $0.5u$. For each unique collection of the levels of the four factors, one hundred independent runs are executed with at most twenty iterations. For each run, the euclidean distances between the iterates and the minimizer near the initialization point are recorded. 

\subsection{Results and Discussion}
Note that the results Models 1, 2 and 3 are nearly identical for the purposes of our discussion, and so we will feature Model 1 only in our discussion below.

\begin{figure}[hbtp]
\centering 
\includegraphics[width=\textwidth]{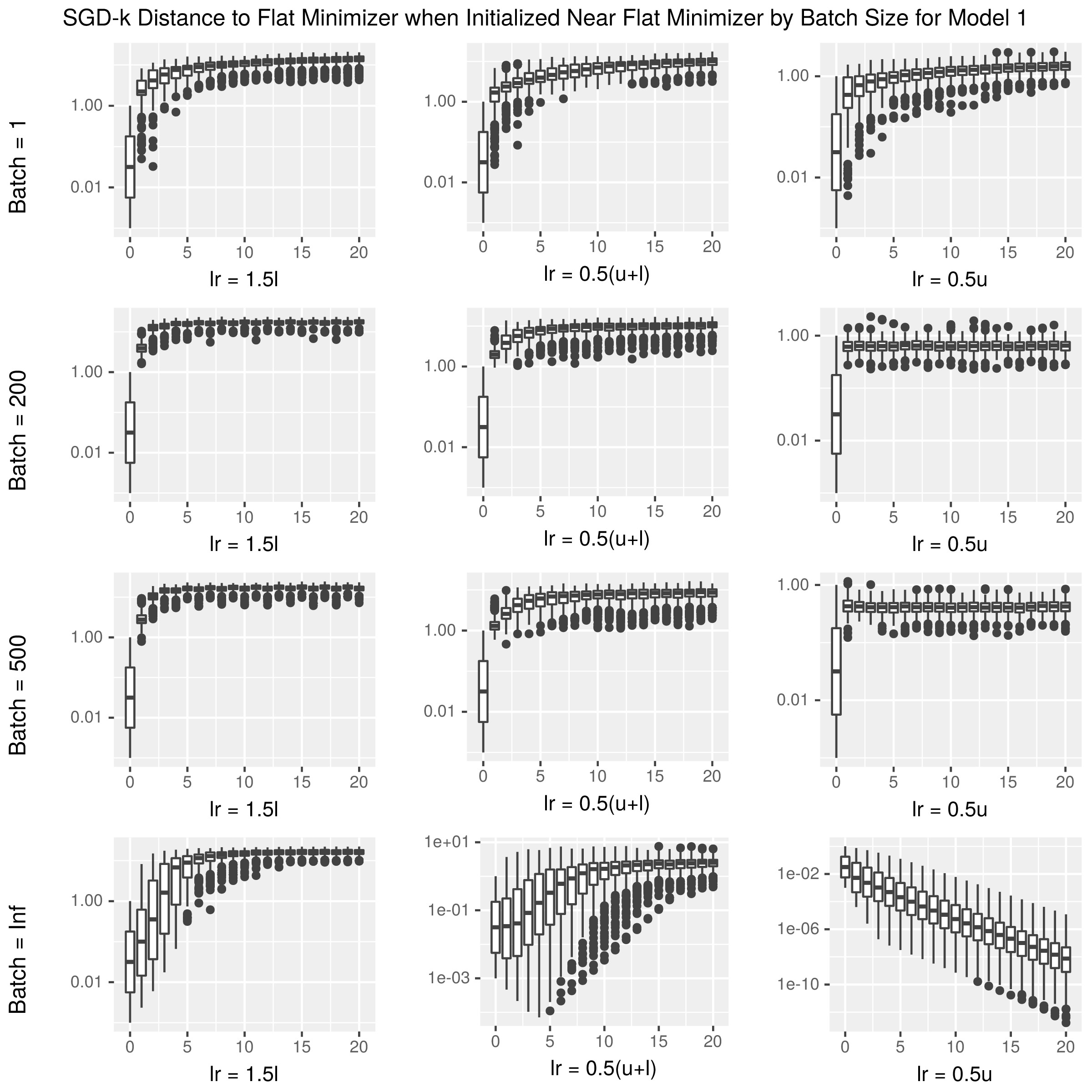}
\caption[SGD-$k$ on Styblinski-Tang Near Flat Minimizer]{The behavior of SGD-$k$ on Model 1 for different batch sizes when initialized near the flat minimizer. The $y$-axis shows the distance (in logarithmic scale) between the iterates and the flat minimizer for all runs of the specified batch size and specified learning rate.}
\label{figure-sgd:st-1}
\end{figure}

\begin{figure}[htbp]
\centering 
\includegraphics[width=\textwidth]{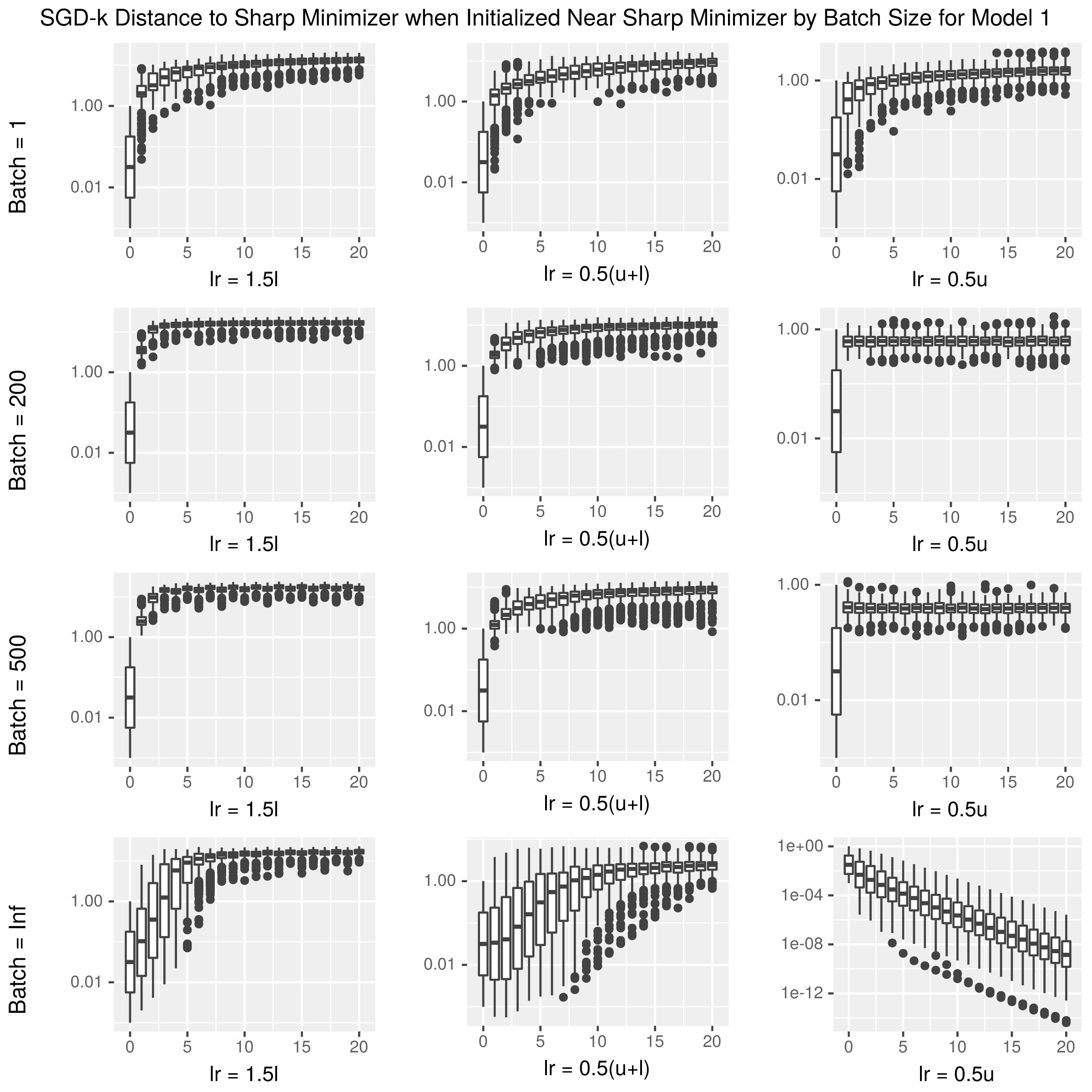}
\caption[SGD-$k$ on Styblinski-Tang Near Sharp Minimizer]{The behavior of SGD-$k$ on Model 1 for different batch sizes when initialized near the sharp minimizer. The $y$-axis shows the distance (in logarithmic scale) between the iterates and the sharp minimizer for all runs of the specified batch size and specified learning rate.}
\label{figure-sgd:st-3}
\end{figure}

\begin{figure}[htbp]
\centering
\includegraphics[width=\textwidth]{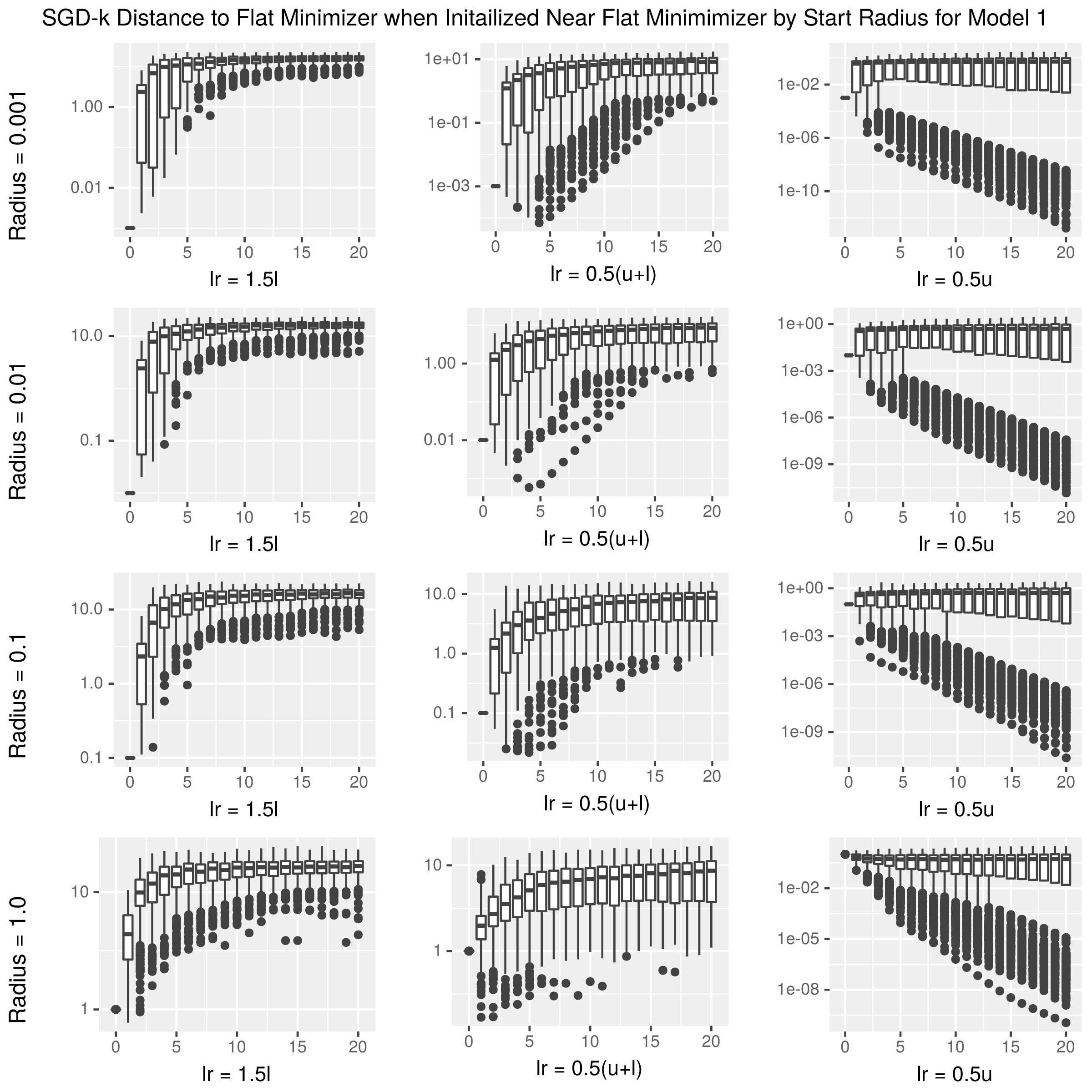}
\caption[Initialization and SGD-$k$ on Styblinski-Tang Near Flat Minimizer]{The behavior of SGD-$k$ on Model 1 for different starting radii when initialized near the flat minimizer. The $y$-axis shows the distance (in logarithmic scale) between the iterates and the flat minimizer for all runs of the specified starting radius and specified learning rate.}
\label{figure-sgd:st-2}
\end{figure}

\begin{figure}[hbtp]
\centering 
\includegraphics[width=\textwidth]{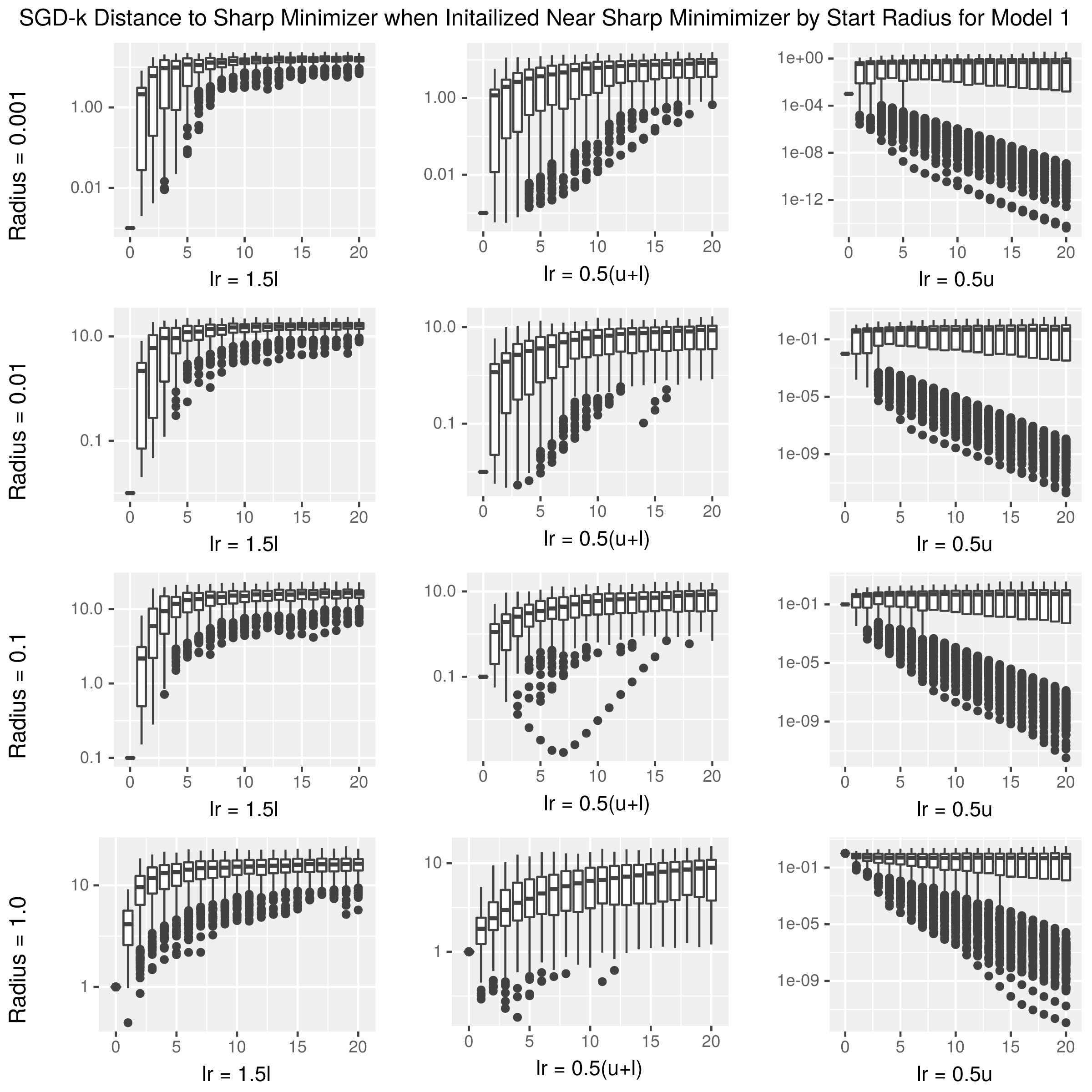}
\caption[Initialization and SGD-$k$ on Styblinski-Tang Near Sharp Minimizer]{The behavior of SGD-$k$ on Model 1 for different starting radii when initialized near the sharp minimizer. The $y$-axis shows the distance (in logarithmic scale) between the iterates and the sharp minimizer for all runs of the specified starting radius and specified learning rate.}
\label{figure-sgd:st-4}
\end{figure}

Figure \ref{figure-sgd:st-1} shows the distance between SGD-$k$ iterates and the flat minimizer on Model 1 for different batch sizes and different learning rates when SGD-$k$ is initialized near the flat minimizer. 
We note that, regardless of batch size, the iterates are diverging from the flat minimizer and are converging to the corners of the feasible region for learning rates $1.5l$ and $0.5(u+l)$. 
On the other hand, for the learning rate $0.5u$, we see stability for $k=1,200,500$ about the minimizer and we see convergence for GD (i.e., $k=\infty$). 
Similarly, Figure \ref{figure-sgd:st-3} shows the distance between SGD-$k$ iterates and the sharp minimizer on Model 1 for different batch sizes and different learning rates when SGD-$k$ is initialized near the sharp minimizer. 
We note that, regardless of batch size, the iterates are diverging from the sharp minimizer and converging to the corners of the feasible region for learning rates $1.5l$ and $0.5(u+l)$. On the other hand, for the learning rate $0.5u$, we see stability for $k=1,200,500$ about the sharp minimizer and we see convergence for GD (i.e., $k = \infty$). 

Taking the results in Figures \ref{figure-sgd:st-1} and \ref{figure-sgd:st-3} together, we see that we are able to use our deterministic mechanism to find learning rates that either ensure divergence from or stability about a minimizer if we know its local geometric properties. Consequently, we again have evidence that our deterministic mechanism can correctly predict the behavior of SGD-$k$ for nonconvex problems. Moreover, when divergence does occur, Figures \ref{figure-sgd:st-1} and \ref{figure-sgd:st-3} also display exponential divergence as predicted by our deterministic mechanism.

For a different perspective, Figure \ref{figure-sgd:st-2} shows the distance between SGD-$k$ iterates and the flat minimizer on Model 1 for different starting radii and different learning rates when SGD-$k$ is initialized near the flat minimizer. We note that, regardless of the starting radius, the iterate are diverging from the flat minimizer and are converging to the corners of the feasible region for learning rates $1.5l$ and $0.5(u+l)$. On the other hand, for the learning rate $0.5u$, we see stability and even convergence for all of the runs regardless of the starting radius. 
Similarly, Figure \ref{figure-sgd:st-4} shows the distance between SGD-$k$ iterates and the sharp minimizer on Model 1 for different starting radii and different learning rates when SGD-$k$ is initialized near the sharp minimizer. We note that, regardless of the starting radius, the iterates are diverging from the sharp minimizer and are converging to the corners of the feasible region for learning rates $1.5l$ and $0.5(u+l)$. On the other hand, for learning rate $0.5u$, we see stability about and even convergence to the sharp minimizer. 

Taking the results in Figures \ref{figure-sgd:st-2} and \ref{figure-sgd:st-4} together, we see that we are able to use our deterministic mechanism to find learning rates that either ensure divergence from, or stability about, a minimizer if we know its local geometric properties.  Consequently, we again have evidence that our deterministic mechanism can correctly predict the behavior of SGD-$k$ for nonconvex problems. Moreover, when divergence does occur, Figures \ref{figure-sgd:st-1} and \ref{figure-sgd:st-3} also display exponential divergence as predicted by our deterministic mechanism.

\section{Conclusion} \label{section:conclusion}
How SGD-$k$ converges to, or diverges from, particular minimizers with distinct geometric properties has been an important question in the machine learning literature. We discussed the widely-accepted stochastic mechanism for this observed phenomenon, and we pointed out some intuitive challenges with this mechanism. By analyzing a generic quadratic problem, we proposed a deterministic mechanism for how SGD escapes from different minimizers, which is based on the batch size of SGD-$k$ and the local geometric properties of the minimizer. We then verified the predictions of our deterministic mechanism on two nontrivial nonconvex problems through experimentation. In particular, we verified the prediction of exponential divergence from minimizers for certain learning rates, which is perhaps the strongest evidence supporting our deterministic mechanism over the stochastic mechanism. Our ongoing work focuses on rigorously extending our divergence and convergence analysis to a local analysis for nonconvex problems in order to provide a precise characterization of these phenomenon in the nonconvex setting. 

\section*{Acknowledgements}
We would like to thank Rob Webber for pointing out the simple proof item 2 in Lemma \ref{lemma-sgd:higher-moment-bounds}. We would also like to thank Mihai Anitescu for his general guidance throughout the preparation of this work.

\section*{Funding}
The author is supported by the NSF Research and Training Grant \# 1547396.

\bibliographystyle{tfs}
\bibliography{bibliography}

\appendix
\section{Quadratic-Circle Problem} \label{section:qc}
Each quadratic-circle function is the minimum of two functions: a quadratic basin function and a circular basin function. The quadratic basin function is defined as
\begin{equation} \label{eqn-sgd:quad-basin}
g(x) = q_1 ( x_2 - q_2 x_1^2 - q_3)^2.
\end{equation}
The quadratic basin function, $g$, traces out a parabola in the plane of its arguments where $g(x)$ is zero and increases quadratically as $x_2$ deviates from this parabola. The parameters $q_1$ and $q_2$ are nonnegative. The parameter $q_1$ determines how quickly $g(x)$ increases off of the parabola (i.e., its sharpness), and the remaining parameters determine the shape of the parabola. Examples of the quadratic basin function are shown in Figure \ref{figure-sgd:quadratic-basin-example}.

\begin{figure}[htbp]
\centering
\subfloat[]{\includegraphics[width=0.45\textwidth]{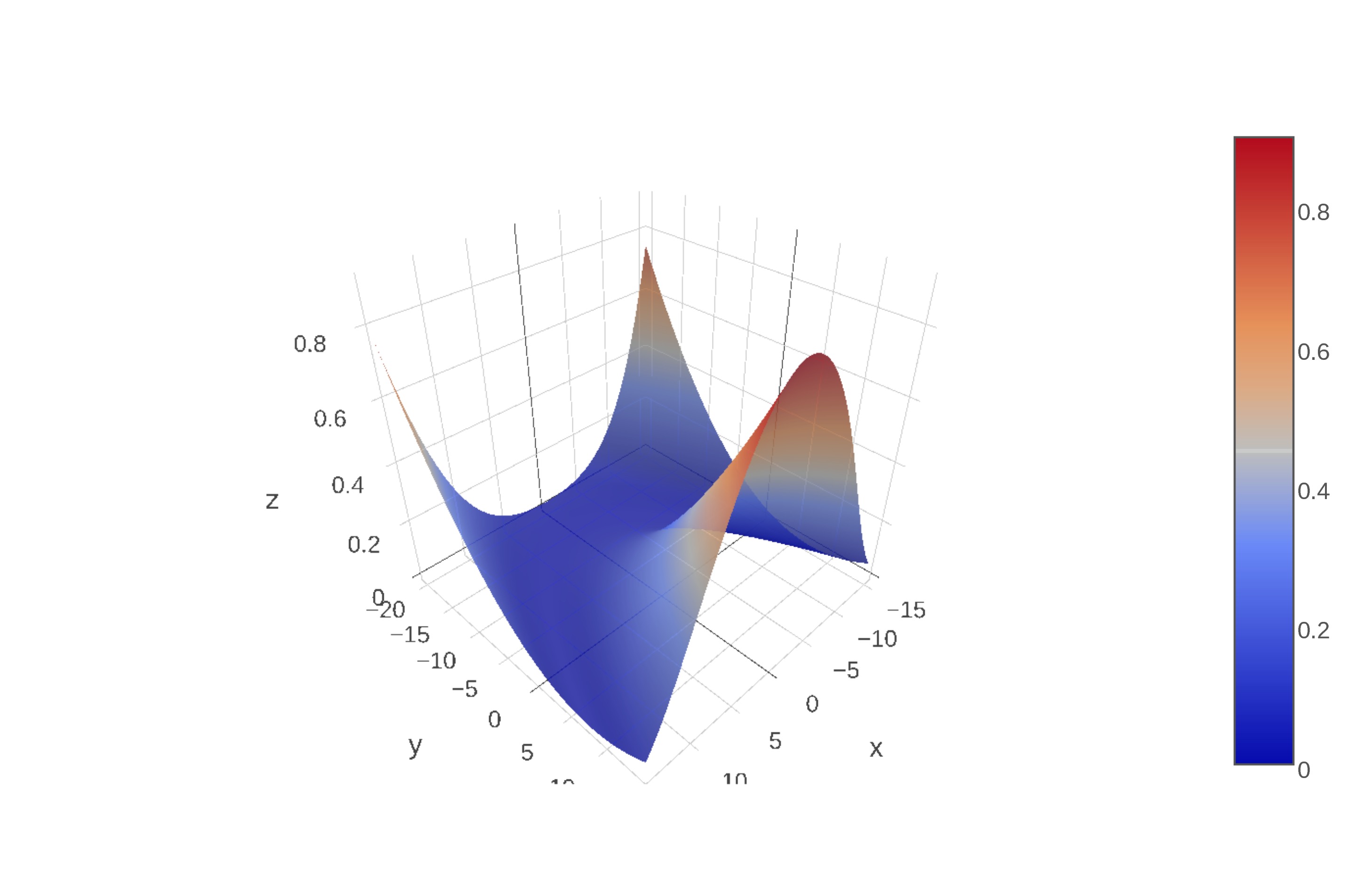}}
\subfloat[]{\includegraphics[width=0.45\textwidth]{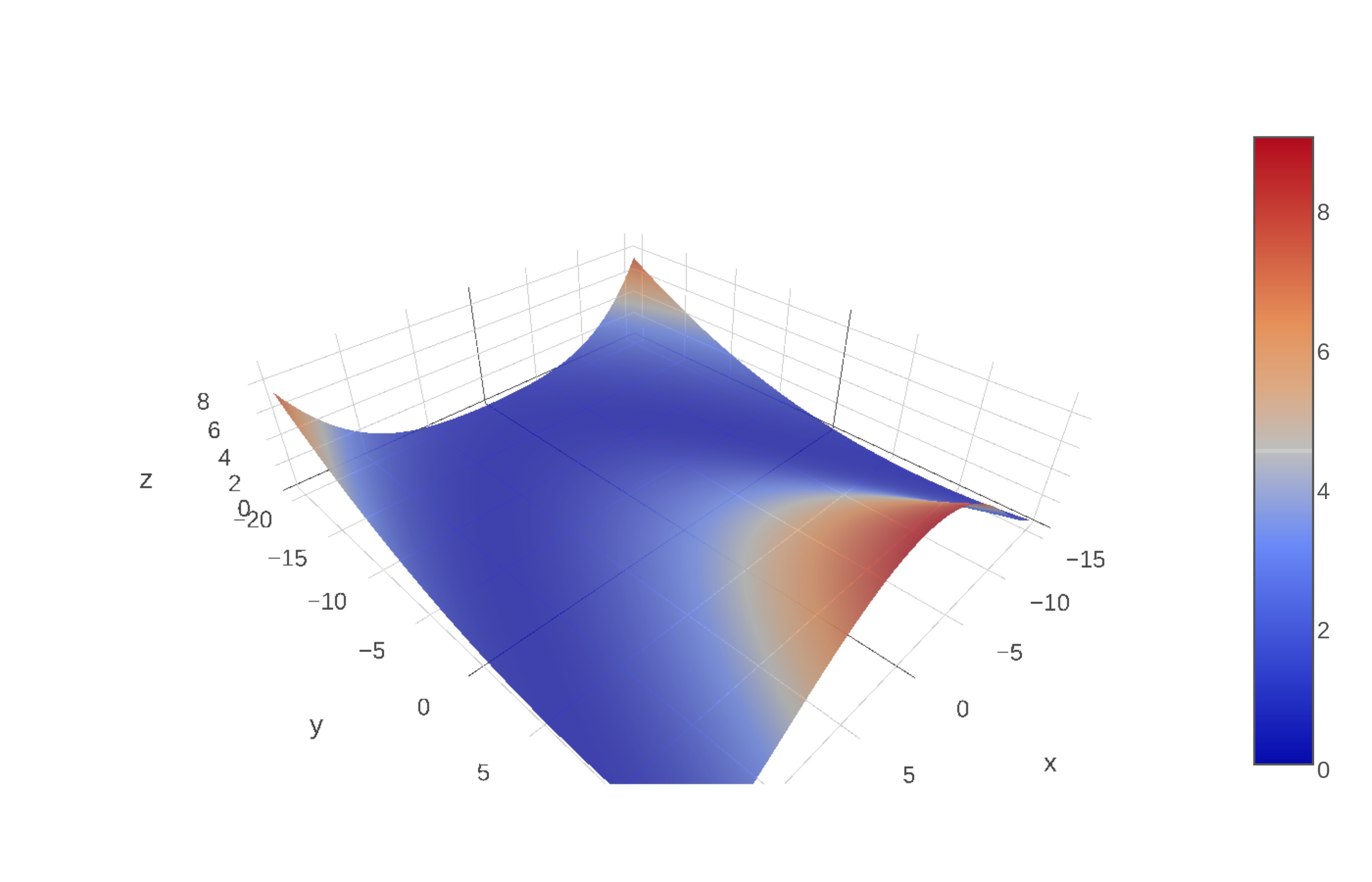}} \\
\subfloat[]{\includegraphics[width=0.45\textwidth]{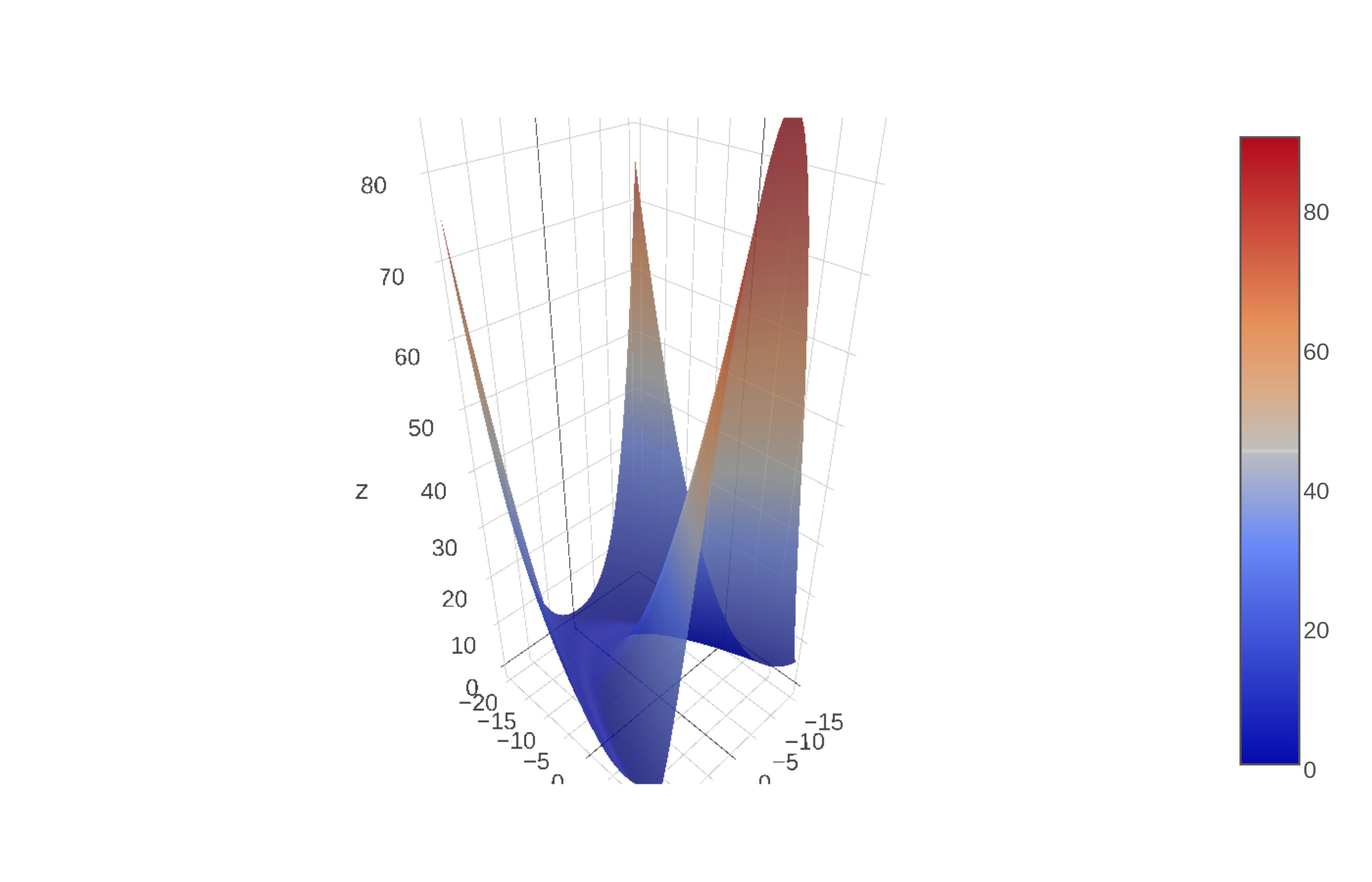}}
\subfloat[]{\includegraphics[width=0.45\textwidth]{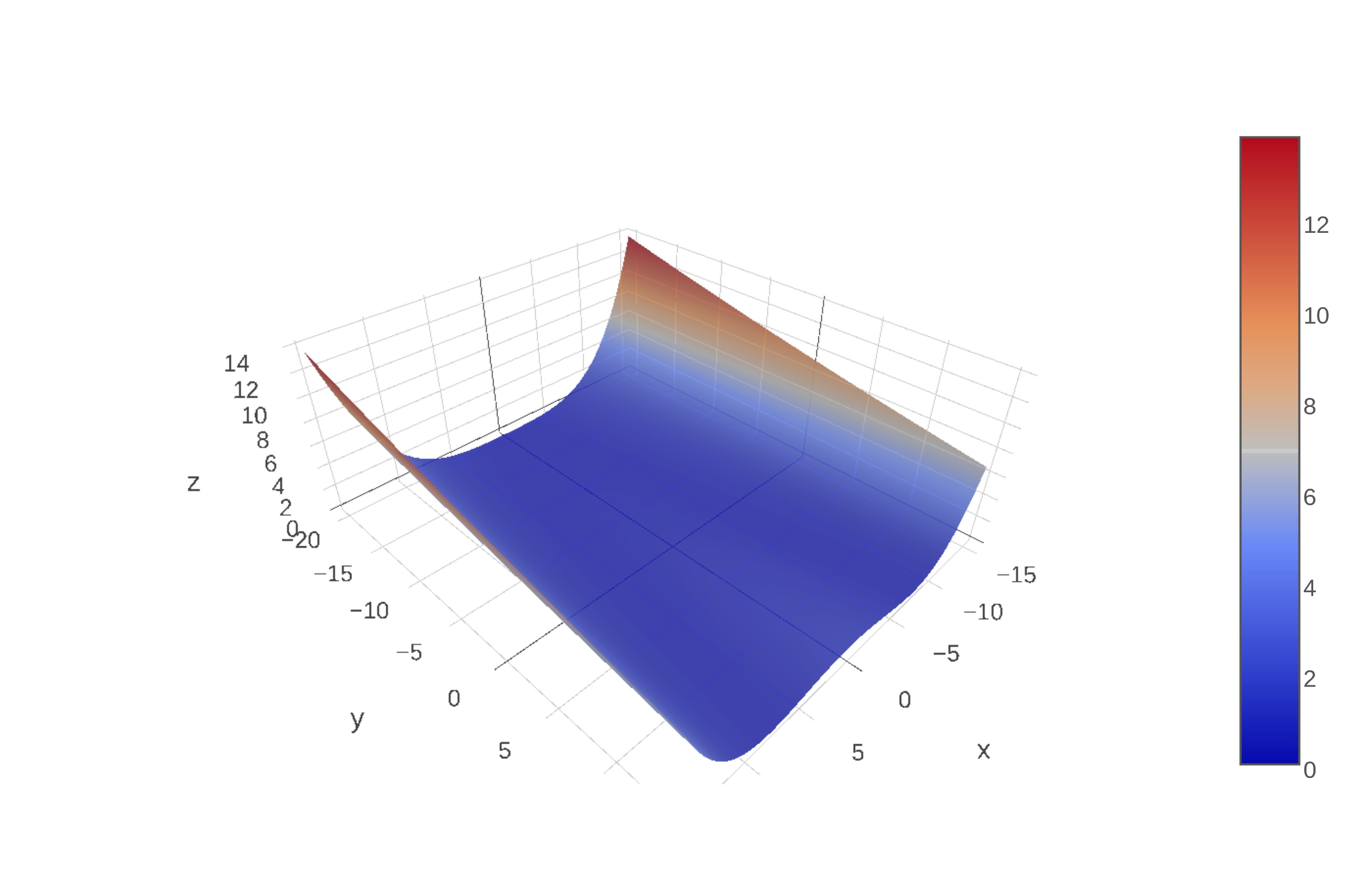}} \\
\subfloat[]{\includegraphics[width=0.45\textwidth]{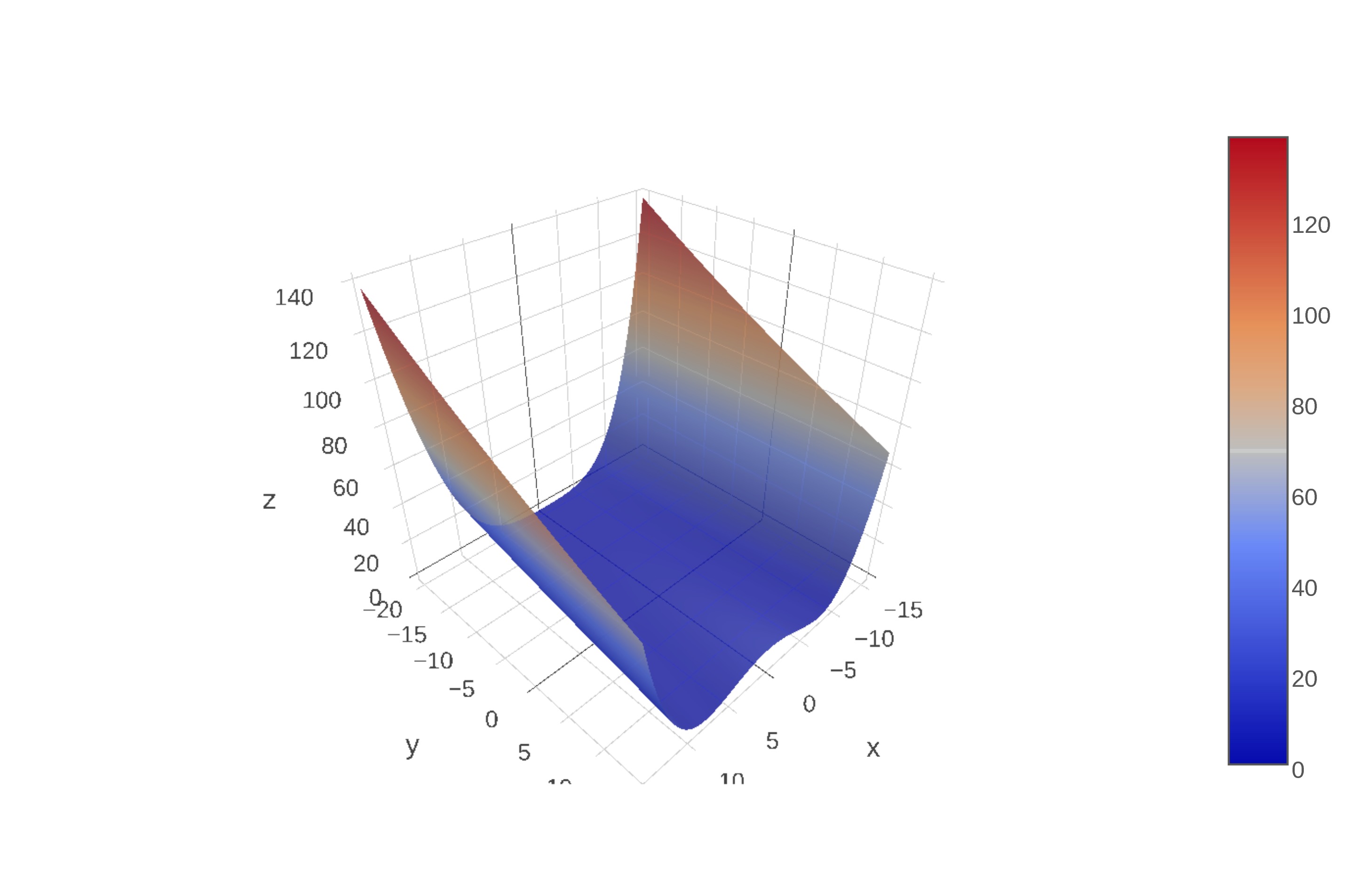}}
\subfloat[]{\includegraphics[width=0.45\textwidth]{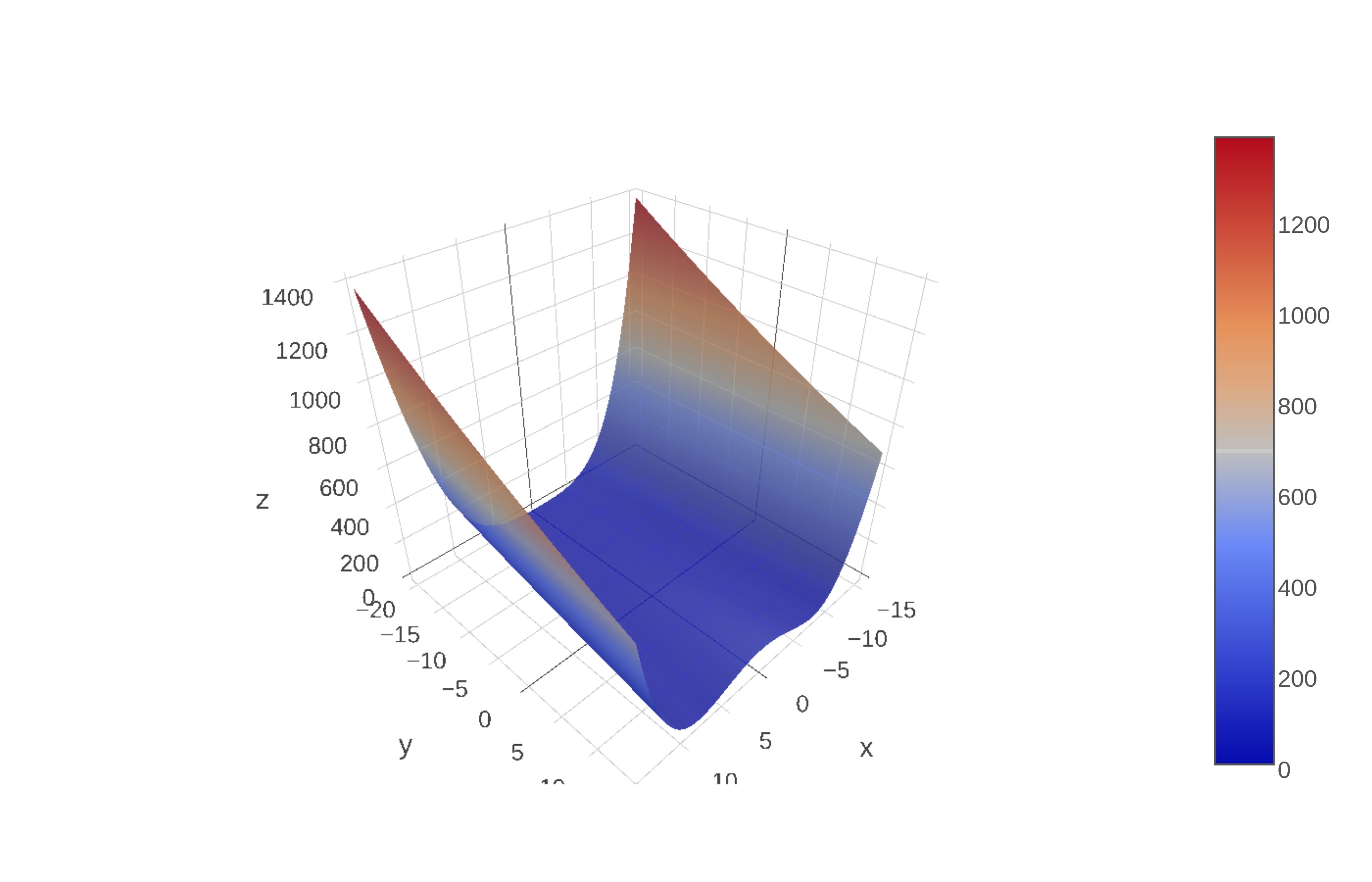}}
\caption[Quadratic Basin Function Examples]{Examples of the quadratic basin functions used in constructing the Quadratic-Circle problem.}
\label{figure-sgd:quadratic-basin-example}
\end{figure}

The circular basin function is a piecewise function defined as
\begin{equation} \label{eqn-sgd:circ-basin}
h(x) = \begin{cases}
c_4 & \norm{x} \leq c_2 \\
c_4+c_1 & \norm{x} \geq c_3 \\
c_4 + c_1 \left( \frac{\norm{x} - c_2}{c_3 - c_2}  \right)^3 \left[6 \left( \frac{\norm{x} - c_2}{c_3 - c_2}  \right)^2 - 15 \left( \frac{\norm{x} - c_2}{c_3 - c_2}  \right) + 10 \right] & \text{ otherwise}
\end{cases},
\end{equation}
where all of the parameters are nonnegative. The circular basin function, $h$, defines a flat region encompassed by a circle of radius $c_2$ and defines a flat region outside of a circle of radius $c_3$. Between these two circles and for $c_2 > 0$, we have a twice differentiable function which ensures that $h(x)$ is twice continuously differentiable. When the ratio between $c_3$ and $c_2$ decreases, the steepness of the basin's walls increases (i.e., the walls are sharper). Increasing the distance between $c_4$ and $c_1$ has a similar outcome. Examples of the circular basin function are shown in Figur \ref{figure-sgd:circular-basin-example}.

\begin{figure}[htbp]
\centering
\subfloat[]{\includegraphics[width=0.45\textwidth]{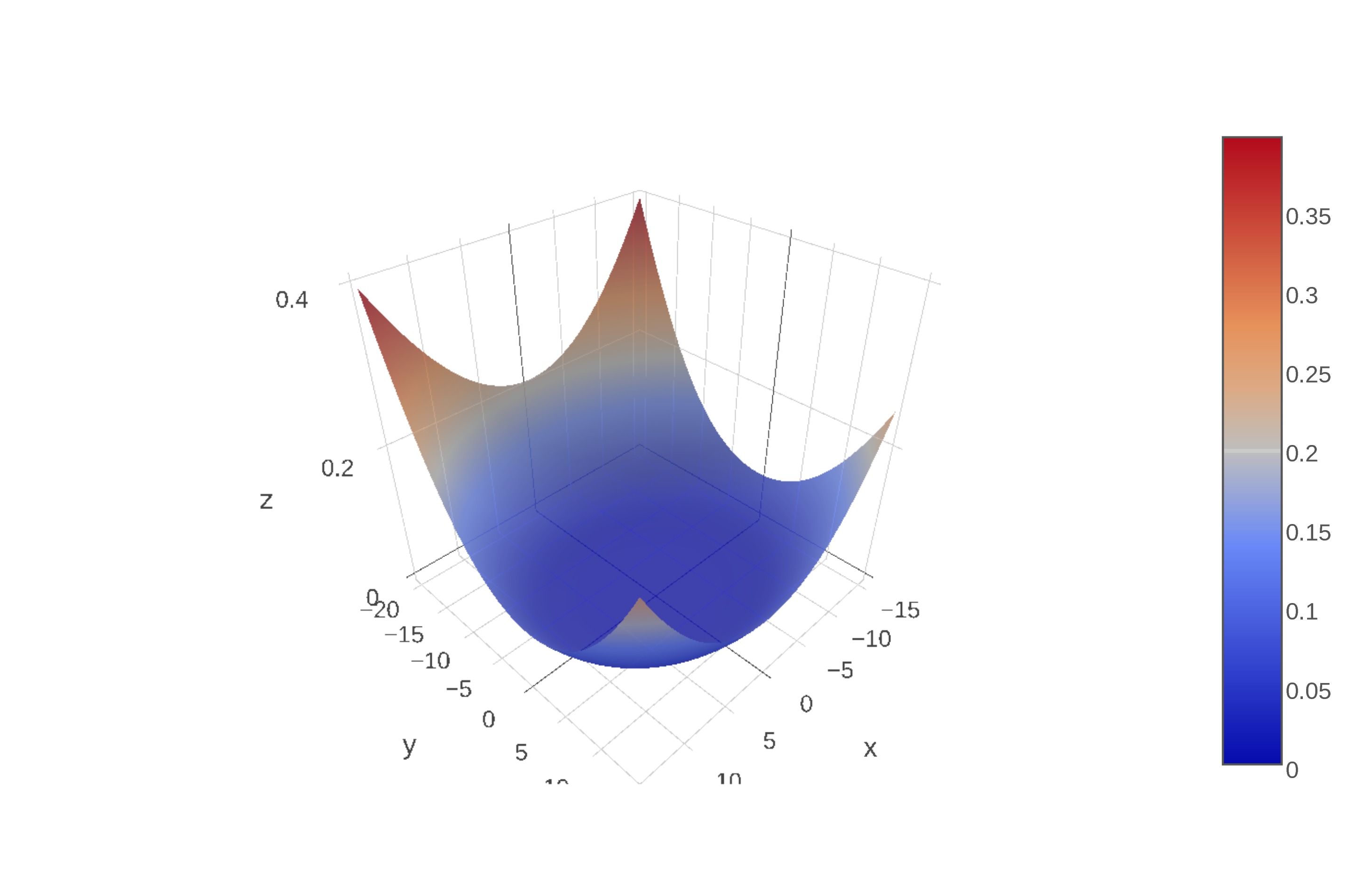}}
\subfloat[]{\includegraphics[width=0.45\textwidth]{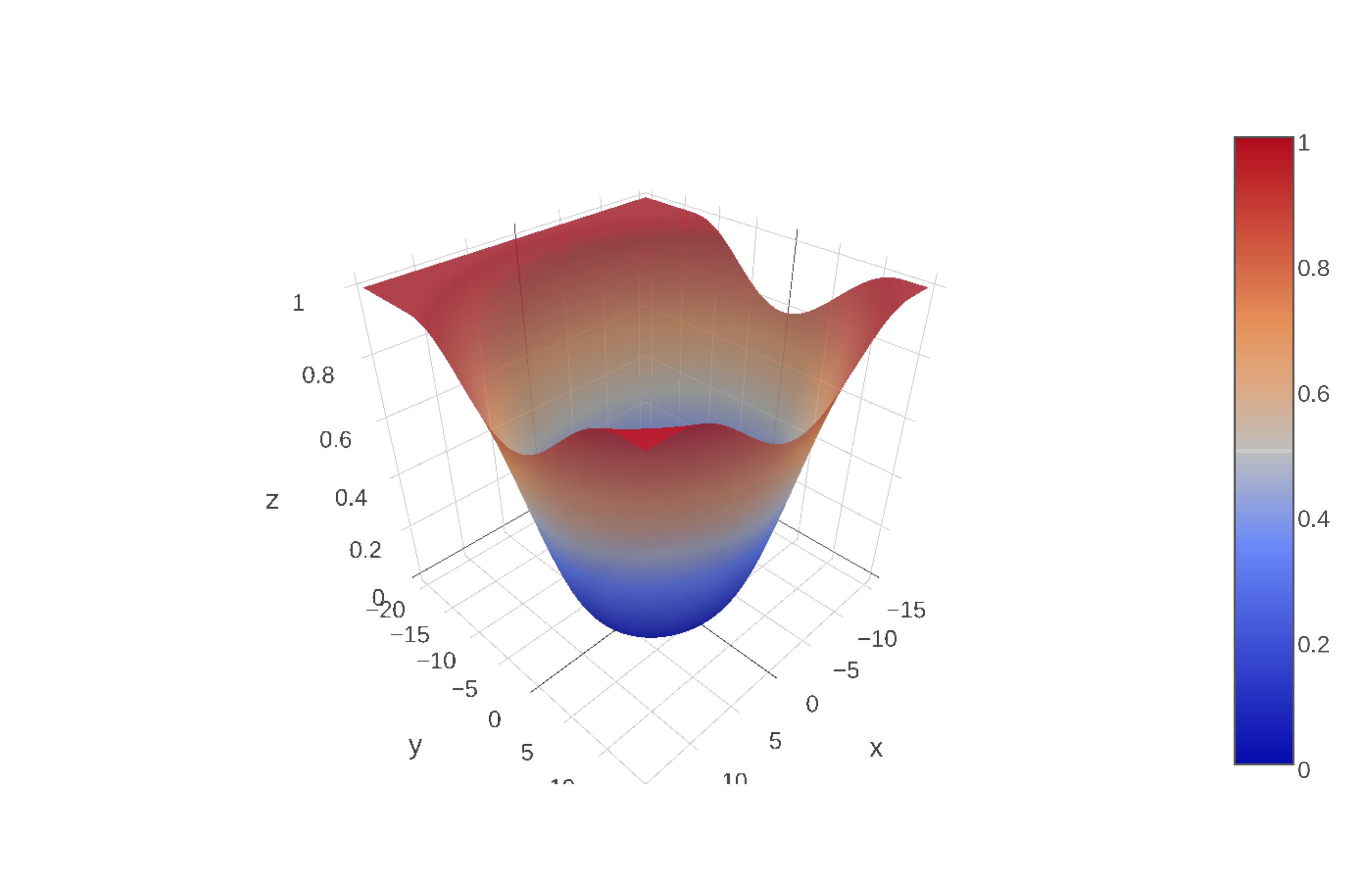}} \\
\subfloat[]{\includegraphics[width=0.45\textwidth]{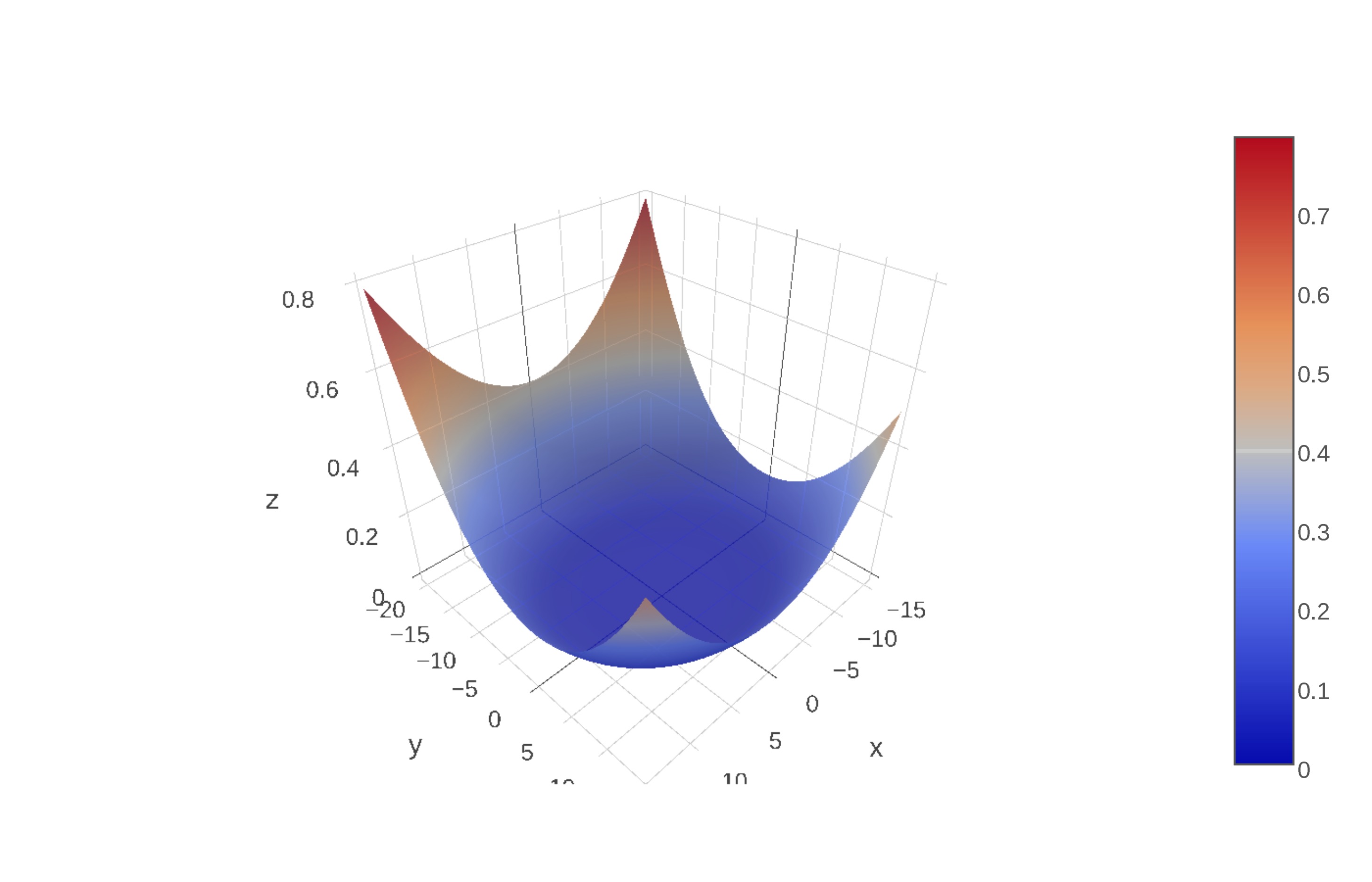}}
\subfloat[]{\includegraphics[width=0.45\textwidth]{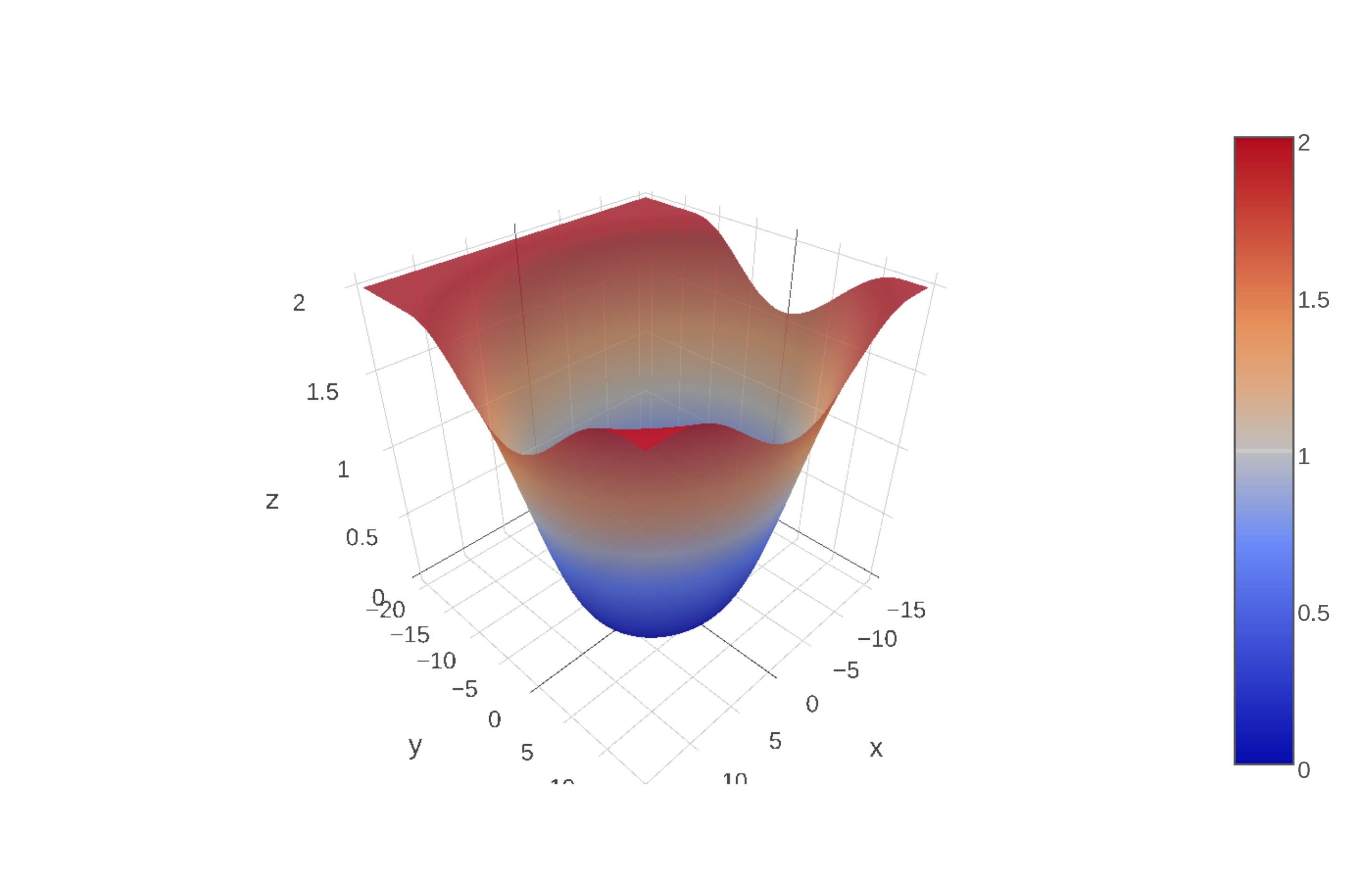}}
\caption[Circular Basin Function Examples]{Examples of the circular basin functions used in constructing the Quadratic-Circle Problem.}
\label{figure-sgd:circular-basin-example}
\end{figure}

The quadratic-circle function is then defined to be the minimum of the quadratic basin function and the circular basin function:
\begin{equation} \label{eqn-sgd:qc-func}
f(x) = \min \lbrace h(x), g(x) \rbrace.
\end{equation}
Examples of the quadratic-circle function are shown in Figure \ref{figure-sgd:qc-example}.

\begin{figure}[htb]
\centering
\subfloat[]{\includegraphics[width=0.45\textwidth]{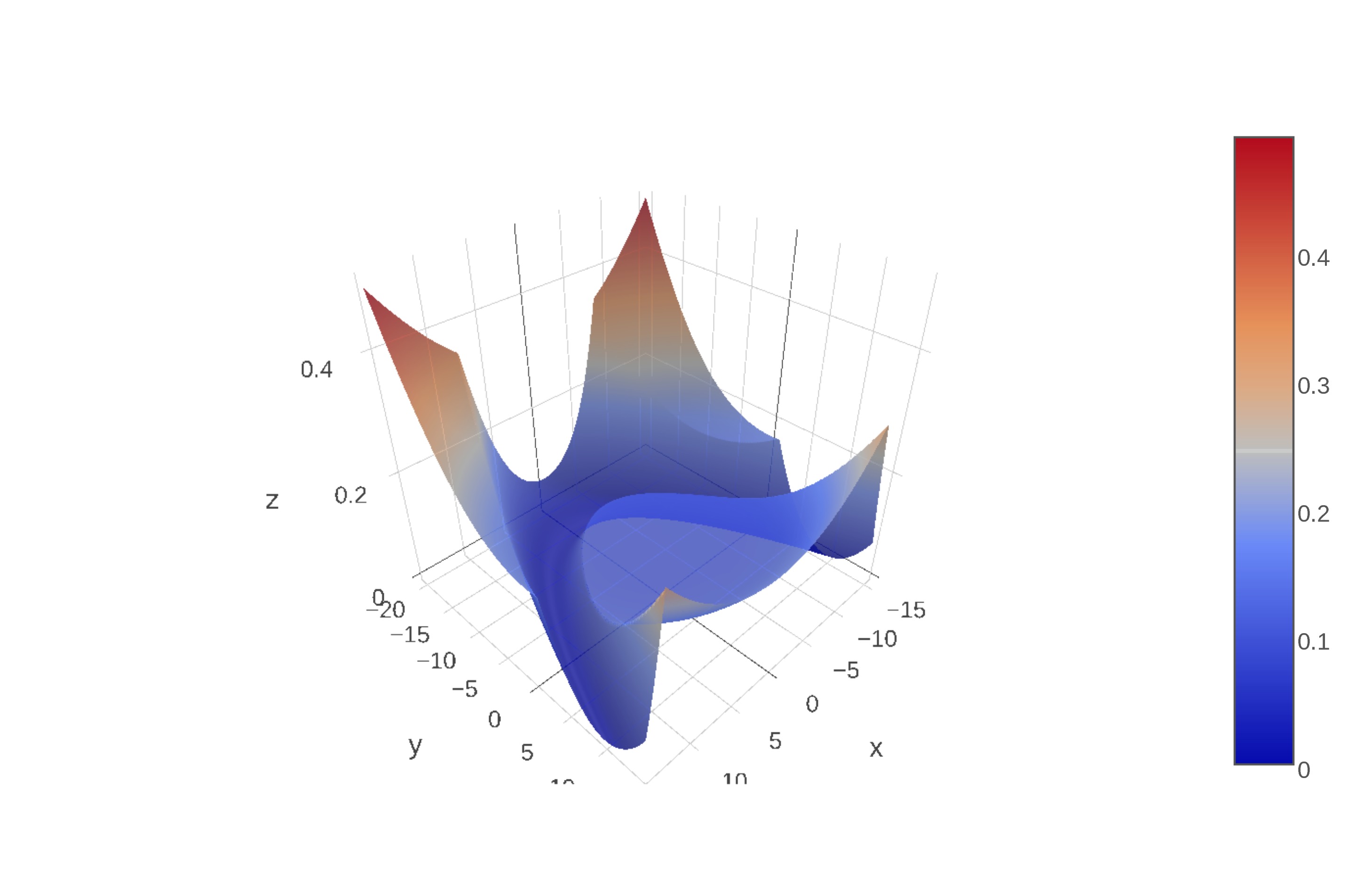}}
\subfloat[]{\includegraphics[width=0.45\textwidth]{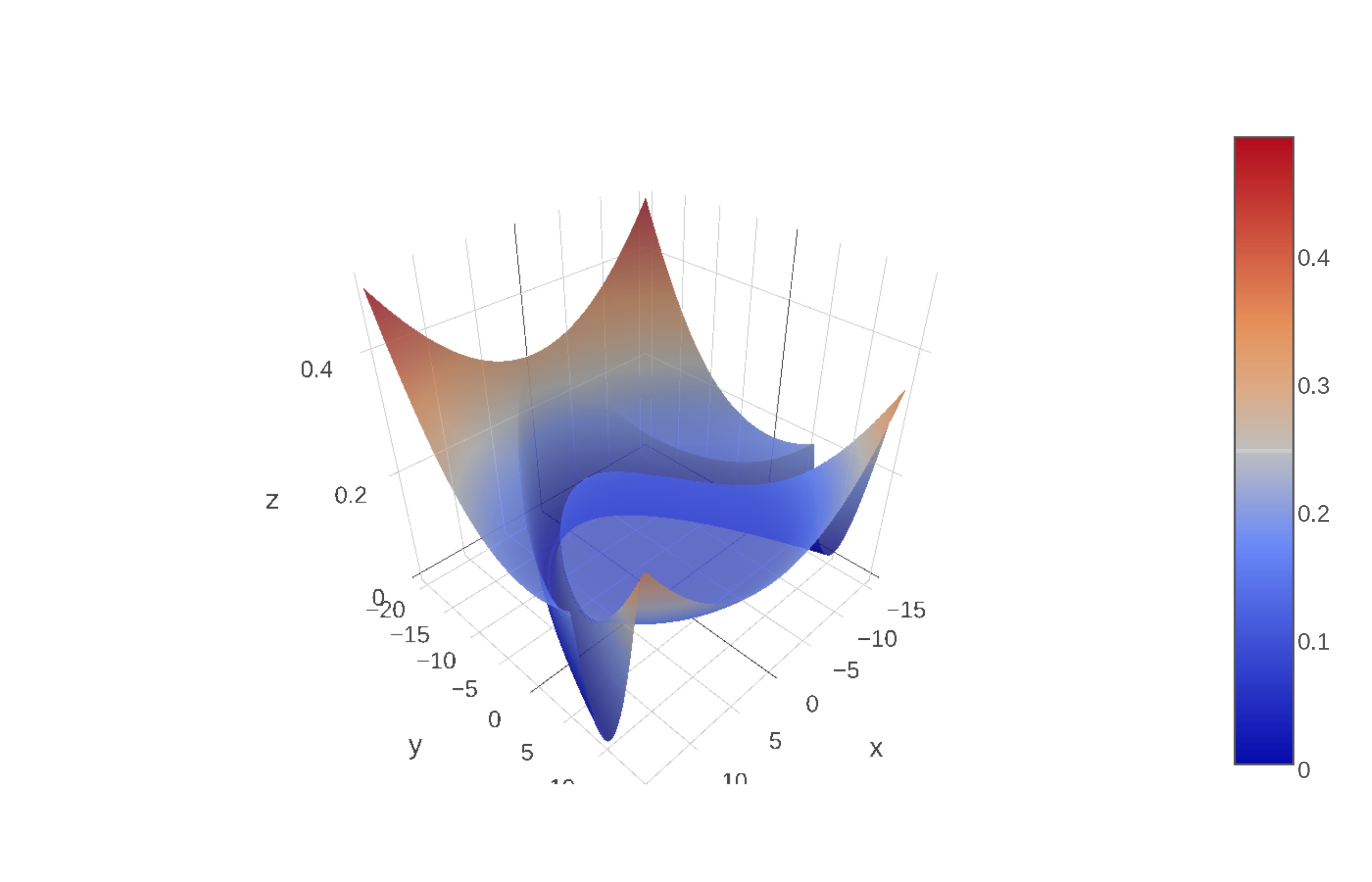}} \\
\subfloat[]{\includegraphics[width=0.45\textwidth]{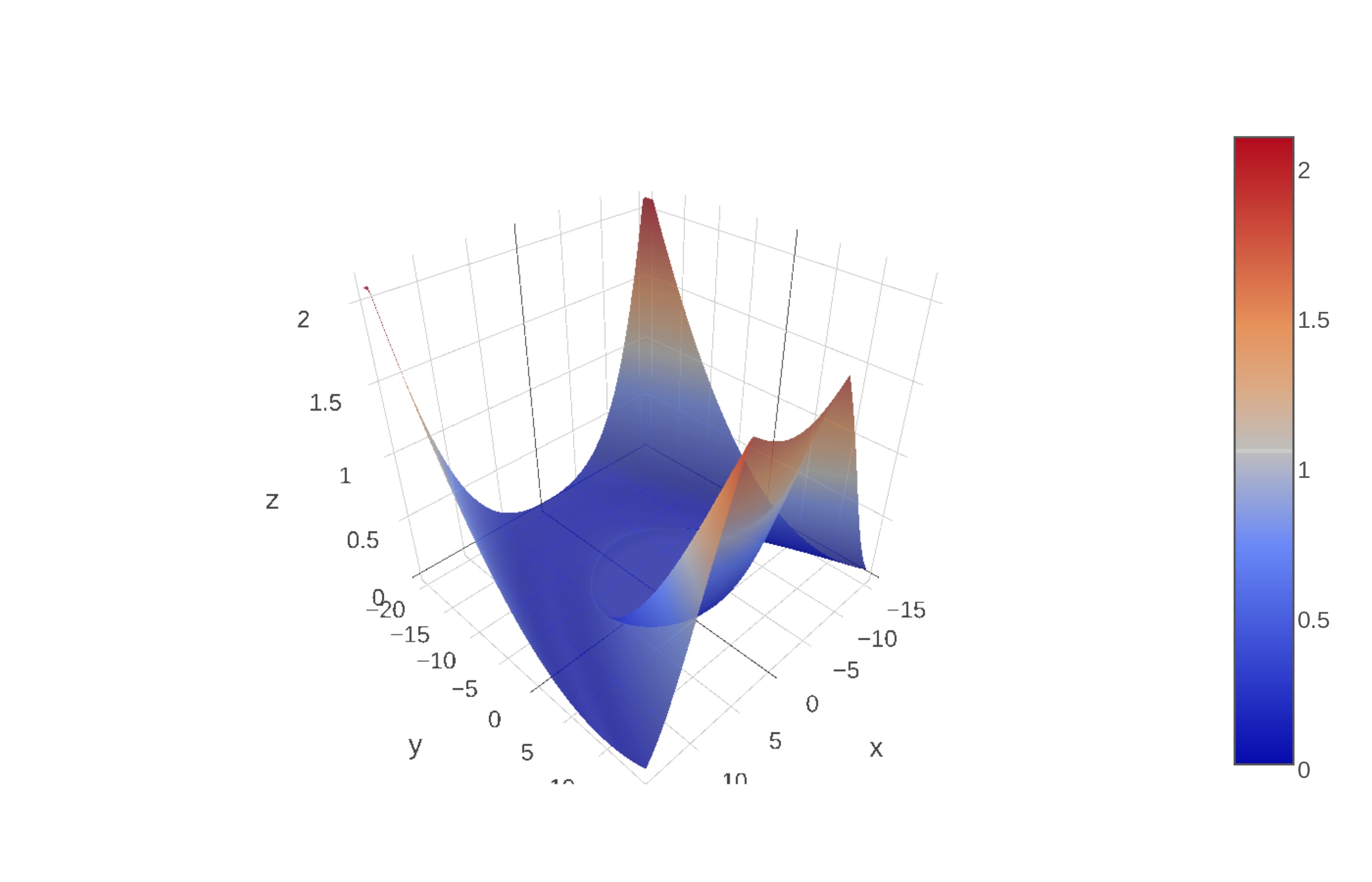}}
\subfloat[]{\includegraphics[width=0.45\textwidth]{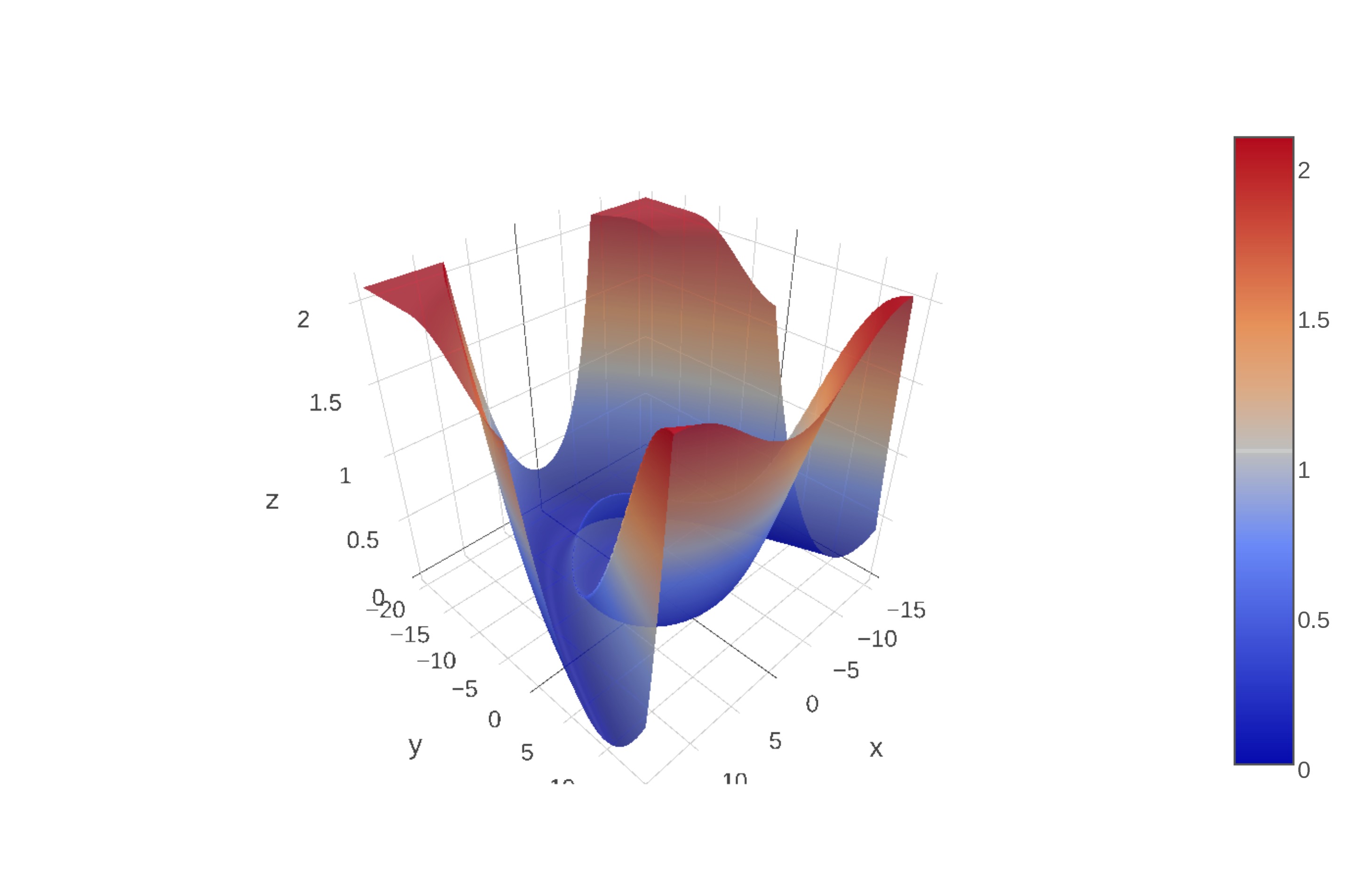}}
\caption[Quadratic-Circle Function Examples]{Examples of the quadratic-circle function.}
\label{figure-sgd:qc-example}
\end{figure}

\begin{remark}
The quadratic-circle function has curves of non-differentiability. If an iterate is at a point of non-differentiability, the gradient will always be selected to be the quadratic component.
\end{remark}

\end{document}